\documentclass[12pt,reqno]{article}
\usepackage{amssymb,amscd,amsmath,amsthm,color}
\textwidth=15.5cm
\textheight=23.0cm
\parindent=15pt
\parskip=4pt

\hoffset=-9mm
\voffset=-17mm

\newtheorem{thm}{Theorem}[section]
\newtheorem{prop}[thm]{Proposition}
\newtheorem{lemma}[thm]{Lemma}

\newtheorem{remark}[thm]{Remark}
\newtheorem{example}[thm]{Example}
\newtheorem{problem}[thm]{Problem}
\numberwithin{equation}{section}

\def\bR{\mathbb{R}}

\def\bN{\mathbb{N}}
\def\cH{\mathcal{H}}

\def\bP{\mathbb{P}}

\def\<{\langle}
\def\>{\rangle}
\def\dT{d_{\mathrm T}}
\def\eps{\varepsilon}
\def\OM{\mathrm{OM}}

\def\bw{\mathbf{w}}
\def\cA{\mathcal{A}}

\def\cP{\mathcal{P}}
\def\cU{\mathcal{U}}

\begin{document}
\allowdisplaybreaks

\centerline{\LARGE Operator means deformed by a fixed point method}
\bigskip
\bigskip
\centerline{\Large
Fumio Hiai\footnote{{\it E-mail address:} hiai.fumio@gmail.com}}

\medskip
\begin{center}
$^1$\,Tohoku University (Emeritus), \\
Hakusan 3-8-16-303, Abiko 270-1154, Japan
\end{center}

\medskip
\begin{abstract}
By means of a fixed point method we discuss the deformation of operator means and multivariate
means of positive definite matrices/operators. It is shown that the deformation of an operator
mean becomes again an operator mean. The means deformed by the weighted power means are
particularly examined.

\bigskip\noindent
{\it 2010 Mathematics Subject Classification:}
Primary 47A64; Secondary 47B65, 47L07, 58B20

\bigskip\noindent
{\it Key words and phrases:}
Operator mean, operator monotone function, Positive definite matrices, Fixed point, Thompson
metric, Weighted geometric mean, Weighted power mean
\end{abstract}

\section{Introduction}

The notion of (two-variable) operator means of positive operators on a Hilbert space was
introduced in an axiomatic way by Kubo and Ando \cite{KA}. The main theorem of \cite{KA} says
that there is a one-to-one correspondence between the operator means $\sigma$ and the positive
operator monotone functions $f$ on $(0,1)$ with $f(1)=1$ in such a way that
\begin{align*}
A\sigma B=A^{1/2}f(A^{-1/2}BA^{1/2})A^{1/2}
\end{align*}
for positive invertible operators $A,B$ on a Hilbert space $\cH$. The extension to general
positive operators $A,B$ is given as $A\sigma B=\lim_{\eps\searrow0}(A+\eps I)\sigma(B+\eps I)$.
The operator monotone function $f$ on $(0,\infty)$ corresponding to $\sigma$ is called the
representing function of $\sigma$. Thus, most properties of operator means can be described in
terms of their representing functions, so the study of ``operator" means can essentially be
reduced to that of ``numerical" operator monotone functions on $(0,\infty)$. 

It was a long-standing problem to extend the notion of operator means to the case of more than
two variables of matrices/operators. A breakthrough came when the definitions of multivariate
geometric means of positive definite matrices were found in the iteration method by Ando, Li and
Mathias \cite{ALM} and in the Riemannian geometry method by Moakher \cite{Mo} and by Bhatia and
Holbrook \cite{BH}. In the latter method, the mean $G_\bw(A_1,\dots,A_n)$ with a probability
weight $\bw=(w_1,\dots,w_n)$ is defined as the minimizer of the weighted sum of the squares
$\sum_{j=1}^nw_j\delta^2(X,A_j)$, where $\delta(X,Y)$ is the Riemannian trace metric, and it is
also characterized by the gradient zero equation $\sum_{j=1}^nw_j\log X^{-1/2}A_jX^{-1/2}=0$
called the Karcher equation. Since then, the Riemannian multivariate means have extensively been
developed by many authors. Among others, the monotonicity property of $G_\bw(A_1,\dots,A_n)$
was proved in \cite{LL1}. In \cite{LP1} the multivariate weighted power means
$P_{\bw,r}(A_1,\dots,A_n)$ for $r\in[-1,1]\setminus\{0\}$ were introduced and the convergence
$\lim_{r\to0}P_{\bw,r}=G_\bw$ was proved. Furthermore, the multivariate geometric and power means
have recently been generalized to probability measures on the positive definite matrices based on
the Wasserstein distance (see, e.g., \cite{HL1,HL2,KL,LL4}).

In the recent study of Riemannian multivariate means there are two significant features worth
noting from the technical point of view. The one is that the positive definite matrices (of
fixed dimension) form a space of nonpositive curvature (NPC) so that the general theory of NPC
spaces \cite{St} is of essential use. The other is that the fixed point method is often used in
different places. For instance, the definition itself of the weighted power means $P_{\bw,r}$ is
done in terms of the fixed point (see Example \ref{E-multi-power} in Section 4). Motivated by
the second feature, we are aiming in the present paper to apply the fixed point method to
multivariate means as well as (two-variable) operator means in a more systematic way.

The paper is organized as follows. In Section 2, $\tau$ and $\sigma$ are operator means for
positive operators on $\cH$ assumed infinite-dimensional, where $\sigma\ne\frak{l}$ ($\frak{l}$
is the left trivial mean $X\frak{l}Y=X$). For any positive invertible operators $A,B$ we prove
that the fixed point equation $X=(X\sigma A)\tau(X\sigma B)$ has a unique positive invertible
solution, which is denoted by $A\tau_\sigma B$. Then $\tau_\sigma$ is proved to define an
operator mean. Section 3 presents properties and examples of the deformed operator means
$\tau_\sigma$. In particular, we examine the two-parameter deformation
$\tau_{s,r}:=\tau_{\frak{p}_{s,r}}$ for $s\in(0,1]$ and $r\in[-1,1]$ where $\frak{p}_{s,r}$ are
the weighted power means. In Section 4, we consider an $n$-variable mean $M$ of positive
invertible operators on $\cH$ (of finite or infinite dimension), and assume that $M$ satisfies
some basic properties such as joint monotonicity, etc. For any operator means
$\sigma_1,\dots,\sigma_n$ and positive invertible operators $A_1,\dots,A_n$, we show that the
equation $X=M(X\sigma_1A_1,\dots,X\sigma_nA_n)$ has a unique positive invertible solution,
which defines the deformed mean $M_{(\sigma_1,\dots,\sigma_n)}(A_1,\dots,A_n)$ satisfying
properties inherited from $M$. In particular, the deformation of $M$ by
$\sigma_1=\dots=\sigma_n=\frak{p}_{s,r}$ is examined. In this way, we can produce a lot of
multivariate means via deformation, while their construction is not so concrete. Since the
operator mean is a special case of multivariate means, there is a bit redundancy between the
presentations of Sections 2 and 3 and those of Section 4. However, the main stress in Sections 2
and 3 is the correspondence between the deformed operator means $\tau_\sigma$ and their
representing operator monotone functions, so the present way of separating sections makes the
paper more readable. Finally, a further extension of the fixed point method to the setting of
probability measures is briefly remarked in Section 5, whose detailed discussions will be in a
forthcoming paper.

\section{Main theorem}

Throughout this section we assume that $\cH$ is an infinite-dimensional separable Hilbert space
and $B(\cH)$ is the Banach space of all bounded linear operators on $\cH$ with the operator
norm $\|\cdot\|$. An operator $A\in B(\cH)$ is positive if $\<\xi,A\xi\>\ge0$ for all
$\xi\in\cH$. We denote by $B(\cH)^+$ the closed convex cone of positive operators in $B(\cH)$
and by $B(\cH)^{++}$ the open convex cone of invertible positive operators in $B(\cH)$.

The \emph{Thompson metric} on $B(\cH)^{++}$ is defined by
\begin{align*}
\dT(A,B):=\|\log A^{-1/2}BA^{-1/2}\|,\qquad A,B\in B(\cH)^{++},
\end{align*}
which is also written as
\begin{align*}
\dT(A,B)=\log\max\{M(A/B),M(B/A)\},
\end{align*}
where $M(A/B):=\inf\{\alpha>0:A\le\alpha B\}$. It is known \cite{Th} that $\dT$ is a complete
metric on $B(\cH)^{++}$ and the topology on $B(\cH)^{++}$ induced by $\dT$ is the same as the
operator norm topology.

An \emph{operator mean} $\sigma$ introduced by Kubo and Ando \cite{KA} is a binary operation
$$
\sigma:B(\cH)^+\times B(\cH)^+\ \longrightarrow\ B(\cH)^+
$$
satisfying the following four properties where $A,A',B,B',C\in B(\cH)^+$:
\begin{itemize}
\item[(I)] \emph{Joint monotonicity:} $A\le A'$ and $B\le B'$ imply $A\sigma B\le A'\sigma B'$.
\item[(II)] \emph{Transformer inequality:} $C(A\sigma B)C\le(CAC)\sigma(CBC)$.
\item[(III)] \emph{Downward continuity:} If $A_k,B_k\in B(\cH)^+$, $k\in\bN$, $A_k\searrow A$
and $B_k\searrow B$, then $A_k\sigma B_k\searrow A\sigma B$, where $A_k\searrow A$ means that
$A_1\ge A_2\ge\cdots$ and $A_k\to A$ in the strong operator topology.
\item[(IV)] \emph{Normalization:} $I\sigma I=I$, where $I$ is the identity operator on $\cH$.
\end{itemize}

Each operator mean $\sigma$ is associated with a non-negative operator monotone function
$f_\sigma$ on $[0,+\infty)$ with $f_\sigma(1)=1$, called the \emph{representing function} of
$\sigma$, in such a way that
\begin{align}\label{ope-mean}
A\sigma B=A^{1/2}f_\sigma(A^{-1/2}BA^{-1/2})A^{1/2},\qquad A,B\in B(\cH)^{++},
\end{align}
which is extended to general $A,B\in B(\cH)^+$ as
$A\sigma B:=\lim_{\eps\searrow0}(A+\eps I)\sigma(B+\eps I)$.

We write $\frak{l}$ and $\frak{r}$ to denote the two extreme left and right operator means,
i.e., $A\frak{l}B:=A$ and $A\frak{r}B:=B$ for every $A,B\in B(\cH)^+$. From now on, we assume
that $\tau$ and $\sigma$ are arbitrary operator means in the sense of Kubo and Ando, with a
single restriction $\sigma\ne\frak{l}$.

\begin{lemma}\label{L-Thomp}
Let $A,B,X,Y\in B(\cH)^{++}$. Then
\begin{itemize}
\item[(a)] $\dT(X\tau A,Y\tau B)\le\max\{\dT(X,Y),\dT(Y,B)\}$.
\item[(b)] $\dT(X\sigma A,Y\sigma A)<\dT(X,Y)$ whenever $X\ne Y$.
\end{itemize}
\end{lemma}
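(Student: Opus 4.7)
My plan is to deduce both parts from the standard Thompson-metric identity $\dT(P,Q)=\log\max\{M(P/Q),M(Q/P)\}$, the joint monotonicity (I), and the joint positive homogeneity $(cP)\sigma(cQ)=c(P\sigma Q)$ built into every Kubo--Ando mean via \eqref{ope-mean}. (I read the second entry of the maximum in (a) as $\dT(A,B)$, the apparent intent.)

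For (a), set $d:=\max\{\dT(X,Y),\dT(A,B)\}$, so that $e^{-d}Y\le X\le e^dY$ and $e^{-d}B\le A\le e^dB$. Joint monotonicity of $\tau$ gives
$$(e^{-d}Y)\tau(e^{-d}B)\le X\tau A\le(e^dY)\tau(e^dB),$$
and homogeneity collapses the outer terms to $e^{\mp d}(Y\tau B)$, so $e^{-d}(Y\tau B)\le X\tau A\le e^d(Y\tau B)$, i.e.\ $\dT(X\tau A,Y\tau B)\le d$.

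For (b) I would push the same sandwich through with $B=A$ while extracting a strict gap from the hypothesis $\sigma\ne\frak{l}$. Write $d:=\dT(X,Y)>0$, $s:=e^{-d}\in(0,1)$, and $T:=Y^{-1/2}AY^{-1/2}\in B(\cH)^{++}$, so $\mathrm{spec}(T)\subset[m,M]$ for some $0<m\le M<\infty$. From $X\le e^dY$, joint monotonicity and \eqref{ope-mean} give
$$X\sigma A\le(e^dY)\sigma A=e^d\,Y^{1/2}f_\sigma(sT)Y^{1/2}.$$
Since $\sigma\ne\frak{l}$, $f_\sigma$ is non-constant; and because operator monotone functions on $(0,\infty)$ are real analytic, non-constancy forces strict monotonicity (a non-strictly increasing real-analytic function would be constant on an interval, hence on all of $(0,\infty)$, contradicting $f_\sigma(1)=1$ together with $f_\sigma$ not being identically $1$). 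In particular $f_\sigma(u)>0$ for $u>0$, and $g(u):=f_\sigma(su)/f_\sigma(u)<1$ for every $u>0$; by continuity its maximum $\alpha_1$ on $[m,M]$ is still strictly less than $1$. Spectral calculus then gives $f_\sigma(sT)\le\alpha_1 f_\sigma(T)$, and sandwiching by $Y^{1/2}$ yields $X\sigma A\le\alpha_1 e^d(Y\sigma A)$. The symmetric argument starting from $X\ge e^{-d}Y$ produces $\alpha_2>1$ with $f_\sigma(s^{-1}T)\ge\alpha_2 f_\sigma(T)$, whence $Y\sigma A\le\alpha_2^{-1}e^d(X\sigma A)$. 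Since both $\alpha_1 e^d$ and $\alpha_2^{-1}e^d$ are strictly below $e^d$, the Thompson-metric identity gives $\dT(X\sigma A,Y\sigma A)<d$.

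The only non-routine step is the implication ``$f_\sigma$ non-constant $\Rightarrow$ $f_\sigma$ strictly increasing'', which is what upgrades the pointwise gap to a uniform gap on the compact spectrum of $T$ and thereby to a genuine operator gap; this is precisely where the hypothesis $\sigma\ne\frak{l}$ does its work. Everything else is bookkeeping with the Thompson metric together with the monotonicity and homogeneity built into the Kubo--Ando representation.
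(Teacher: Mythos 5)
Your proof is correct, and your reading of the misprint in (a) is the intended one: the second entry of the maximum should be $\dT(A,B)$, which is exactly what the paper's own argument uses. Part (a) coincides with the paper's proof (sandwich by $e^{\pm d}$, joint monotonicity, homogeneity). In part (b) you take a somewhat different route. The paper first isolates a qualitative lemma: since $\sigma\ne\frak{l}$ forces $f_\sigma$ to be strictly increasing, a uniform additive gap $\min\{f_\sigma(t+\eps)-f_\sigma(t):\delta\le t\le\delta^{-1}\}>0$ shows that $A<B$ (invertible difference) implies $X\sigma A<X\sigma B$; it then applies this with $e^{-\alpha}A<A<e^\alpha A$ to get the strict operator inequalities $e^{-\alpha}(X\sigma A)<Y\sigma A<e^\alpha(X\sigma A)$ and concludes that some $\beta\in(0,\alpha)$ works. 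You instead extract a multiplicative gap directly from the representing function: on the compact spectral interval of $T=Y^{-1/2}AY^{-1/2}$ the ratio $f_\sigma(su)/f_\sigma(u)$ stays below some $\alpha_1<1$ and $f_\sigma(s^{-1}u)/f_\sigma(u)$ above some $\alpha_2>1$, giving $X\sigma A\le\alpha_1e^d(Y\sigma A)$ and $Y\sigma A\le\alpha_2^{-1}e^d(X\sigma A)$, hence $\dT(X\sigma A,Y\sigma A)<d$ at once. Both arguments rest on the same two ingredients --- strict monotonicity of $f_\sigma$ (your analyticity argument is fine; it also follows from the integral representation) and compactness of the relevant spectral interval to make the gap uniform --- but your version yields an explicit contraction factor and bypasses the paper's final conversion of strict operator inequalities into a strictly smaller exponent (a step the paper leaves slightly implicit, using boundedness of $X\sigma A$), while the paper's version isolates the reusable order fact $A<B\Rightarrow X\sigma A<X\sigma B$.
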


\begin{proof}
(a)\enspace
Set $\alpha:=\max\{\dT(X,Y),\dT(A,B)\}$. Since $e^{-\alpha}X\le Y\le e^\alpha X$ and
$e^{-\alpha}A\le B\le e^\alpha A$, we have
\begin{align*}
&Y\tau B\le(e^\alpha X)\tau(e^\alpha A)=e^\alpha(X\tau A), \\
&Y\tau B\ge(e^{-\alpha}X)\tau(e^{-\alpha}A)=e^{-\alpha}(X\tau A).
\end{align*}
Therefore, $\dT(X\tau A,Y\tau B)\le\alpha$.

(b)\enspace
In the following proof, we write $A<B$ to mean that $B-A\in B(\cH)^{++}$.
First, we see that if $X,A,B\in B(\cH)^{++}$ and $A<B$, then $X\sigma A<X\sigma B$. By
assumption $\sigma\ne\frak{l}$ or $f_\sigma\not\equiv1$, $f_\sigma$ is strictly increasing on
$[0,\infty)$. Since $A<B$ so that $X^{-1/2}AX^{-1/2}<X^{-1/2}BX^{-1/2}$, we have
$X^{-1/2}AX^{-1/2}+\eps I\le X^{-1/2}BX^{-1/2}$ for some $\eps>0$. Choose a $\delta\in(0,1)$
such that $\delta I\le X^{-1/2}AX^{-1/2}\le\delta^{-1}I$. Since
\begin{align*}
&f_\sigma(X^{-1/2}AX^{-1/2}+\eps I)-f_\sigma(X^{-1/2}AX^{-1/2}) \\
&\qquad\ge\bigl(\min\{f_\sigma(t+\eps)-f\sigma(t):\delta\le t\le\delta^{-1}\}\bigr)I,
\end{align*}
one has
\begin{align*}
f_\sigma(X^{-1/2}AX^{-1/2})<f_\sigma(X^{-1/2}AX^{-1/2}+\eps I)
\le f_\sigma(X^{-1/2}BX^{-1/2}),
\end{align*}
which implies that $X\sigma A<X\sigma B$.

Now, set $\alpha:=\dT(X,Y)>0$ (thanks to $X\ne Y$). Since $e^{-\alpha}A<A<e^\alpha A$, the
above shown fact gives
\begin{align*}
&Y\sigma A\le(e^\alpha X)\sigma A<(e^\alpha X)\sigma(e^\alpha A)=e^\alpha(X\sigma A), \\
&Y\sigma A\ge(e^{-\alpha}X)\sigma A>(e^{-\alpha}X)\sigma(e^{-\alpha}A)=e^{-\alpha}(X\sigma A).
\end{align*}
Therefore, $e^{-\beta}(X\sigma A)\le Y\sigma A\le e^\beta(X\sigma A)$ for some
$\beta\in(0,\alpha)$, which implies that $\dT(X\sigma A,Y\sigma A)\le\beta<\alpha$.
\end{proof}

\begin{remark}\label{R-conti}\rm
The inequality of Lemma \ref{L-Thomp}\,(a) implies that
$(A,B)\in(B(\cH)^{++})^2\to A\tau B\in B(\cH)^{++}$ is continuous in the operator norm, while it
is also obvious from expression \eqref{ope-mean}. It is worthwhile to note that when restricted
to $\delta I\le A,B\le\delta^{-1}I$ for any $\delta\in(0,1)$, the map $(A,B)\mapsto A\tau B$ is
continuous in the strong operator topology. To see this, recall a well-known fact that if $f$ is
a continuous function on a bounded interval $[a,b]$, then the functional calculus $A\mapsto f(A)$
on the self-adjoint operators $A$ with spectrum inside $[a,b]$ is continuous in the strong
operator topology. Hence, when restricted on $\delta I\le A,B\le\delta^{-1}I$, the map
$(A,B)\mapsto f_\tau(A^{-1/2}BA^{-1/2})$ is continuous in the strong operator topology, which
shows the assertion.
\end{remark}

In the rest of the section, we will consider, given $A,B\in B(\cH)^{++}$, the equation
\begin{align}\label{fixed-eq}
X=(X\sigma A)\tau(X\sigma B),\qquad X\in B(\cH)^{++}.
\end{align}

It might be instructive to start with a few typical examples of equation \eqref{fixed-eq} and
their solutions. Although the following examples are rather well-known, we discuss them in some
detail for the reader's convenience. Note in particular that the weighted power means were
introduced in \cite{LP1} by a fixed point method in the multivariable setting (see Example
\ref{E-multi-power} below).

\begin{example}\label{ex-1}\rm
Let $\tau=\#_\alpha$, the \emph{weighted geometric mean} with $0\le\alpha\le1$, i.e.,
\begin{align*}
A\#_\alpha B:=A^{1/2}(A^{-1/2}BA^{-1/2})^\alpha A^{1/2},\qquad A,B\in B(\cH)^{++},
\end{align*}
corresponding to the operator monotone function $t^\alpha$, $t\ge0$. Let $\sigma=\#_r$ with
$0<r\le1$. Then a unique solution to \eqref{fixed-eq} for any $A,B\in B(\cH)^{++}$ is
$X=A\#_\alpha B$. Indeed, in this case, \eqref{fixed-eq} is equivalent to
$I=(X^{-1/2}AX^{-1/2})^r\#_\alpha(X^{-1/2}BX^{-1/2})^r$. When $\alpha=0$, this means that
$X^{-1/2}AX^{-1/2}=I$ so that $X=A$. When $0<\alpha\le1$, the identity above means that
$X^{-1/2}BX^{-1/2}=(X^{-1/2}AX^{-1/2})^{1-{1\over\alpha}}$, so that
\begin{align*}
B=X^{1/2}(X^{-1/2}AX^{-1/2})^{1-{1\over\alpha}}X^{1/2}
=A^{1/2}(A^{-1/2}XA^{-1/2})^{1\over\alpha}A^{1/2},
\end{align*}
which is solved as $X=A\#_\alpha B$.
\end{example}

\begin{example}\label{ex-2}\rm
Let $\tau=\triangledown_\alpha$, the \emph{weighted arithmetic mean} where $0\le\alpha\le1$,
i.e., $A\triangledown_\alpha B:=(1-\alpha)A+\alpha B$, and $\sigma=\#_r$ with $0<r\le1$. With
$Y:=A^{-1/2}XA^{-1/2}$ and $C:=A^{-1/2}BA^{-1/2}$, \eqref{fixed-eq} in this case is equivalent
to
\begin{align*}
Y=(1-\alpha)Y\#_rI+\alpha Y\#_rC
\ &\iff\ I=(1-\alpha)Y^{-r}+\alpha(Y^{-1/2}CY^{-1/2})^r \\
\ &\iff\ C=\biggl[{Y^r-(1-\alpha)I\over\alpha}\biggr]^{1/r} \\
\ &\iff\ Y=[(1-\alpha)I+\alpha C^r]^{1/r}.
\end{align*}
Therefore, a unique solution to \eqref{fixed-eq} is $X=A\,\frak{p}_{\alpha,r}B$, where
$\frak{p}_{\alpha,r}$ is the \emph{weighted power mean}
\begin{align}\label{weighted-power}
A\,\frak{p}_{\alpha,r}B
:=A^{1/2}\bigl[(1-\alpha)I+\alpha(A^{-1/2}BA^{-1/2})^r\bigr]^{1/r}A^{1/2},
\quad A,B\in B(\cH)^{++},
\end{align}
corresponding to the operator monotone function $f_{\alpha,r}(t):=(1-\alpha+\alpha t^r)^{1/r}$,
$t\ge0$.
\end{example}

\begin{example}\label{ex-3}\rm
Let $\tau=\,!_\alpha$, the \emph{weighted harmonic mean} where $0\le\alpha\le1$, i.e.,
$A\,!_\alpha B:=[(1-\alpha)A^{-1}+\alpha B^{-1}]^{-1}$, and $\sigma=\#_r$ with $0<r\le1$. In this
case, \eqref{fixed-eq} is equivalent to
\begin{align*}
X^{-1}=(X^{-1}\#_rA^{-1})\triangledown_\alpha(X^{-1}\#_rB^{-1}),
\end{align*}
which has the unique solution $X=(A^{-1}\,\frak{p}_{\alpha,r}B^{-1})^{-1}$. The last expression
is indeed the weighted power mean
\begin{align}\label{weighted-power2}
A\,\frak{p}_{\alpha,-r}B
:=A^{1/2}\bigl[(1-\alpha)I+\alpha(A^{-1/2}BA^{-1/2})^{-r}\bigr]^{-1/r}A^{1/2},
\quad A,B\in B(\cH)^{++},
\end{align}
for $\alpha\in[0,1]$ and $-r\in[-1,0)$, corresponding to the operator monotone function
$f_{\alpha,-r}(t):=(1-\alpha+\alpha t^{-r})^{-1/r}$.
\end{example}

The main theorem (Theorem \ref{T-main}) of this section shows that equation \eqref{fixed-eq}
always has a unique solution and it indeed defines an operator mean. The proof of this will be
divided into several lemmas.

\begin{lemma}\label{L-fixed}
For every $A,B\in B(\cH)^{++}$ there exists a unique $X_0\in B(\cH)^{++}$ which satisfies
\eqref{fixed-eq}.
\end{lemma}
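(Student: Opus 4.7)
The plan is to study the self-map $F(X):=(X\sigma A)\tau(X\sigma B)$ on $B(\cH)^{++}$ and show that it has a unique fixed point; since uniqueness and existence rest on different ingredients, I would handle them separately.

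For uniqueness, Lemma \ref{L-Thomp} alone is enough. Assuming $X_1\ne X_2$ were both fixed points, part (a) applied to $\tau$ followed by part (b) applied twice to $\sigma$ yields
\begin{align*}
\dT(X_1,X_2)&=\dT(F(X_1),F(X_2))\\
&\le\max\{\dT(X_1\sigma A,X_2\sigma A),\,\dT(X_1\sigma B,X_2\sigma B)\}\\
&<\dT(X_1,X_2),
\end{align*}
a contradiction. The strict inequality in (b) is exactly where the standing hypothesis $\sigma\ne\frak{l}$ is used.

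For existence, I would proceed by monotone iteration on a Thompson-bounded order interval. Choose $0<m\le M$ with $mI\le A,B\le MI$ and set $\cB:=\{X\in B(\cH)^{++}:mI\le X\le MI\}$. Joint monotonicity (I) of both $\sigma$ and $\tau$, combined with the elementary identity $C\sigma C=C=C\tau C$ (which follows from $I\sigma I=I$ and the transformer inequality (II) applied to $C^{1/2}$ in both directions), shows that $F$ is operator-monotone in $X$ and carries $\cB$ into itself; in particular $mI\le F(mI)$ and $F(MI)\le MI$. Hence the iterates $X_0:=mI$, $X_{n+1}:=F(X_n)$ form a monotone increasing sequence in $\cB$, so by Vigier's theorem they converge in the strong operator topology to some $X_\infty\in\cB\subseteq B(\cH)^{++}$. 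Since all iterates remain in $\cB$, Remark \ref{R-conti} applies, making $X\mapsto X\sigma A$ and $X\mapsto X\sigma B$ (and then the outer $\tau$) SOT-continuous on $\cB$, so $F(X_\infty)=\lim_nF(X_n)=\lim_nX_{n+1}=X_\infty$, as required.

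The main obstacle is precisely this passage to the limit. In infinite dimensions the Thompson metric is not norm-equivalent to the SOT, and the functional calculi defining $\sigma$ and $\tau$ are not jointly SOT-continuous on all of $B(\cH)^{++}$; this is why the entire iteration must be trapped inside the Thompson-bounded interval $\cB$, where the spectra of the arguments stay in $[m,M]$ and Remark \ref{R-conti} restores SOT-continuity. Once the monotone limit $X_\infty$ is extracted inside $\cB$, everything else is formal.
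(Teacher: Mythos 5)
Your proof is correct and follows essentially the same strategy as the paper: uniqueness via the strict Thompson-metric contraction of Lemma \ref{L-Thomp}, and existence via a monotone iteration of $F(X)=(X\sigma A)\tau(X\sigma B)$ trapped in the order interval determined by bounds on $A,B$. The only difference is minor: the paper iterates downward from $\delta^{-1}I$ so the limit is handled by the downward-continuity axiom (III) alone, whereas you iterate upward from $mI$ and invoke the SOT-continuity of Remark \ref{R-conti} (exactly the variant the paper itself uses in the proof of Lemma \ref{L-fixed-ineq}\,(2)), so both routes are sound.
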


\begin{proof}
Define a map from $B(\cH)^{++}$ into itself by
\begin{align}\label{map-F}
F(X):=(X\sigma A)\tau(X\sigma B),\qquad X\in B(\cH)^{++},
\end{align}
which is monotone, i.e., $X\le Y$ implies $F(X)\le F(Y)$. Choose a $\delta\in(0,1)$ such that
$\delta I\le A,B\le\delta^{-1}I$, and let $Z:=\delta^{-1}I$. Since
$F(Z)=F(\delta^{-1}I)\le\delta^{-1}I=Z$, one has $Z\ge F(Z)\ge F(Z^2)\ge\cdots$. Moreover,
$F(Z)\ge F(\delta I)\ge\delta I$, and iterating this gives $F^k(Z)\ge\delta I$ for all $k$.
Therefore, $F^k(Z)\searrow X_0$ for some $X_0\in B(\cH)^{++}$. From the downward continuity of
(III) it follows that
\begin{align*}
F^{k+1}(Z)=(F^k(Z)\sigma A)\tau(F^k(Z)\sigma B)\ \searrow\ (X_0\sigma A)\tau(X_0\sigma B),
\end{align*}
which implies that $X_0=(X_0\sigma A)\tau(X_0\sigma B)$.

To prove the uniqueness, assume that $X_1\in B(\cH)^{++}$ satisfies \eqref{fixed-eq} and
$X_1\ne X_0$. By Lemma \ref{L-Thomp} we then have
\begin{align*}
\dT(X_0,X_1)\le\max\{\dT(X_0\sigma A,X_1\sigma A),\dT(X_0\sigma B,X_1\sigma B)\}<\dT(X_0,X_1),
\end{align*}
a contradiction.
\end{proof}

\begin{lemma}\label{L-fixed-ineq}
Let $A,B\in B(\cH)^{++}$ and $X_0\in B(\cH)^{++}$ be a solution to \eqref{fixed-eq} given in
Lemma \ref{L-fixed}.
\begin{itemize}
\item[(1)] If $Y\in B(\cH)^{++}$ and $Y\ge(Y\sigma A)\tau(Y\sigma B)$, then $Y\ge X_0$.
\item[(2)] If $Y'\in B(\cH)^{++}$ and $Y'\le(Y'\sigma A)\tau(Y'\sigma B)$, then $Y'\le X_0$.
\end{itemize}
\end{lemma}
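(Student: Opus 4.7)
The plan is to exploit the monotonicity of the iteration $F(X):=(X\sigma A)\tau(X\sigma B)$ from the proof of Lemma~\ref{L-fixed}, together with the uniqueness of its fixed point. By joint monotonicity (I) applied to both $\sigma$ and $\tau$, the map $F$ is monotone on $B(\cH)^{++}$; and for any $\delta\in(0,1)$ with $\delta I\le A,B\le\delta^{-1}I$, the normalization (IV) combined with joint monotonicity yields $F(\delta I)\ge\delta I$ and $F(\delta^{-1}I)\le\delta^{-1}I$ (the second already noted in the proof of Lemma~\ref{L-fixed}). Shrinking $\delta$ further, I will assume throughout that $\delta I\le Y,Y'\le\delta^{-1}I$ as well.

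For part (1), starting from $Y\ge F(Y)$, monotonicity of $F$ gives a decreasing sequence $Y\ge F(Y)\ge F^2(Y)\ge\cdots$. Iterating $F(\delta I)\ge\delta I$ by induction yields $F^k(\delta I)\ge\delta I$, and then $Y\ge\delta I$ together with monotonicity produces the uniform lower bound $F^k(Y)\ge F^k(\delta I)\ge\delta I$. Hence $F^k(Y)\searrow Y_\infty$ strongly for some $Y_\infty\ge\delta I$ in $B(\cH)^{++}$. Applying axiom (III) first to $\sigma$ and then to $\tau$ gives $F(F^k(Y))\searrow F(Y_\infty)$, so comparing with $F^{k+1}(Y)\searrow Y_\infty$ shows $F(Y_\infty)=Y_\infty$. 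The uniqueness in Lemma~\ref{L-fixed} identifies $Y_\infty$ with $X_0$, and $Y\ge Y_\infty=X_0$ follows.

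Part (2) is dual. From $Y'\le F(Y')$ and the monotonicity of $F$ one obtains an \emph{increasing} sequence $Y'\le F(Y')\le F^2(Y')\le\cdots$ trapped inside the order interval $[\delta I,\delta^{-1}I]$ (using $F^k(\delta^{-1}I)\le\delta^{-1}I$), hence strongly convergent to some $Y'_\infty$ in that interval. The main obstacle here is that axiom (III) only provides \emph{downward} continuity, so the passage to the limit in $F(F^k(Y'))$ cannot be handled in the same way as in part (1); instead I invoke Remark~\ref{R-conti}, which asserts that $(X,Y)\mapsto X\sigma Y$ and $(X,Y)\mapsto X\tau Y$ are continuous in the strong operator topology when restricted to $\{\delta I\le X,Y\le\delta^{-1}I\}$. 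Composing these two continuities shows $F$ is strongly continuous on $[\delta I,\delta^{-1}I]$, so $F(Y'_\infty)=\lim_k F^{k+1}(Y')=Y'_\infty$, and uniqueness once more forces $Y'_\infty=X_0$, giving $Y'\le X_0$.
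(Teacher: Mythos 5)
Your argument is correct and is essentially the paper's own proof: iterate $F$ starting from $Y$ (resp.\ $Y'$) to get a monotone sequence trapped in an order interval, pass to the limit using downward continuity (III) in part (1) and the strong-operator continuity of Remark \ref{R-conti} for the increasing sequence in part (2), and then invoke the uniqueness of the fixed point from Lemma \ref{L-fixed}. The only cosmetic difference is that you spell out the bounds $F^k(Y)\ge F^k(\delta I)\ge\delta I$ and $F^k(Y')\le\delta^{-1}I$ explicitly, which the paper leaves as ``as in the proof of Lemma \ref{L-fixed}.''
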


\begin{proof}
Let $F$ be the map in \eqref{map-F}. To show (1), assume that $Y\in B(\cH)^{++}$ satisfies
$Y\ge F(Y)$ so that $Y\ge F(Y)\ge F^2(Y)\ge\cdots$. Choose a $\delta>0$ such that
$A,B,Y\ge\delta I$. It then follows as in the proof of Lemma \ref{L-fixed} that
$F^k(Y)\ge\delta I$ for all $k$, and therefore, $F^k(Y)\searrow Y_0$ for some $Y_0\in B(\cH)^{++}$.
Since $Y_0$ is a solution to \eqref{fixed-eq} as in the proof of Lemma \ref{L-fixed}, $Y_0=X_0$
so that $Y\ge Y_0=X_0$.

The proof of (2) is similar, where we have $Y'\le F(Y')\le F^2(Y')\le\cdots$ and use the upward
continuity of $\sigma,\tau$ that is a special case of continuity in the strong operator topology
noted in Remark \ref{R-conti}.
\end{proof}

For every $A,B\in B(\cH)^{++}$ we write $A\tau_\sigma B$ for the unique solution
$X_0\in B(\cH)^{++}$ to \eqref{fixed-eq} given in Lemma \ref{L-fixed}. Hence we have a binary
operation
\begin{align*}
\tau_\sigma:B(\cH)^{++}\times B(\cH)^{++}\ \longrightarrow\ B(\cH)^{++}.
\end{align*}

\begin{lemma}\label{L-properties}
The map $\tau_\sigma$ satisfies the following properties:
\begin{itemize}
\item[(i)]  Joint monotonicity: If $A\le A'$ and $B\le B'$ in $B(\cH)^{++}$, then
$A\tau_\sigma B\le A'\tau_\sigma B'$.
\item[(ii)] Transformer equality: For every invertible $A,B,C\in B(\cH)^{++}$,
\begin{align*}
C(A\tau_\sigma B)C=(CAC)\tau_\sigma(CBC).
\end{align*}
\item[(iii)] Downward continuity: If $A_k\searrow A$ and $B_k\searrow B$ in $B(\cH)^{++}$, then
$A_k\tau_\sigma B_k\searrow A\tau_\sigma B$.
\item[(iv)] Normalization: $I\tau_\sigma I=I$.
\end{itemize}
\end{lemma}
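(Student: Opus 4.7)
The plan is to prove each of the four properties by exhibiting an explicit solution of \eqref{fixed-eq} for a suitably chosen pair of operators and invoking the uniqueness supplied by Lemma \ref{L-fixed}; the monotone variant in Lemma \ref{L-fixed-ineq} plays the same role for inequalities. Normalization (iv) is immediate: since $I\sigma I=I$ and $I\tau I=I$, the identity $I$ is a, hence the, solution of \eqref{fixed-eq} with $A=B=I$. For joint monotonicity (i), given $A\le A'$ and $B\le B'$, I would set $Y:=A'\tau_\sigma B'$ and chain the joint monotonicity of $\sigma$ and $\tau$ to get
\begin{align*}
(Y\sigma A)\tau(Y\sigma B)\le(Y\sigma A')\tau(Y\sigma B')=Y,
\end{align*}
whereupon Lemma \ref{L-fixed-ineq}\,(1) applied to the pair $(A,B)$ yields $Y\ge A\tau_\sigma B$.

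For the transformer equality (ii), the first step is to upgrade the transformer inequality (II) to an equality for invertible $C\in B(\cH)^{++}$: applying (II) once with $C$ and once with $C^{-1}$ in place of $C$ (on the arguments $CAC,CBC$) yields both directions of
\begin{align*}
C(A\sigma B)C=(CAC)\sigma(CBC),
\end{align*}
and the identical argument applies to $\tau$. Setting $X_0:=A\tau_\sigma B$ and conjugating $X_0=(X_0\sigma A)\tau(X_0\sigma B)$ by $C$ from both sides then gives
\begin{align*}
CX_0C=\bigl((CX_0C)\sigma(CAC)\bigr)\tau\bigl((CX_0C)\sigma(CBC)\bigr),
\end{align*}
so $CX_0C$ is the unique solution of \eqref{fixed-eq} for the pair $(CAC,CBC)$, as required.

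The real work sits in the downward continuity (iii). Setting $X_k:=A_k\tau_\sigma B_k$ and $X:=A\tau_\sigma B$, the monotonicity just proved in (i) yields $X_1\ge X_2\ge\cdots\ge X$, and since $X\in B(\cH)^{++}$ there is $\delta>0$ with $X_k\ge\delta I$ for all $k$; consequently the strong-operator limit $X^*:=\lim_kX_k$ exists and lies in $B(\cH)^{++}$. Two applications of the downward continuity (III) of $\sigma$ followed by one of $\tau$ allow me to pass to the limit in the identity $X_k=(X_k\sigma A_k)\tau(X_k\sigma B_k)$ and conclude that $X^*=(X^*\sigma A)\tau(X^*\sigma B)$; uniqueness in Lemma \ref{L-fixed} then identifies $X^*$ with $X$, completing the proof. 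The main obstacle I anticipate is precisely keeping the monotone-decreasing limit $X^*$ inside $B(\cH)^{++}$, which is exactly why (i) must be established first: without the lower bound $X_k\ge X\ge\delta I$ one could not guarantee that $X^*$ is invertible, and Lemma \ref{L-fixed} would not apply.
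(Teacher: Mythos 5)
Your proposal is correct and follows essentially the same route as the paper: (i) via Lemma \ref{L-fixed-ineq}, (ii) via the transformer equality for invertible $C$ together with uniqueness, (iii) by passing to the monotone decreasing limit using (I)--(III) of $\sigma,\tau$ and uniqueness, and (iv) trivially. The only cosmetic differences are that you derive the transformer equality from (II) explicitly (the paper simply invokes it) and obtain the lower bound in (iii) from $X_k\ge A\tau_\sigma B\ge\delta I$ rather than from the iteration bound in the proof of Lemma \ref{L-fixed}; both are valid.
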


\begin{proof}
(i)\enspace
Let $X_0:=A\tau_\sigma B$ and $Y_0:=A'\tau_\sigma B'$. Since
\begin{align*}
Y_0=(Y_0\sigma A')\tau(Y_0\sigma B')\ge(Y_0\sigma A)\tau(Y_0\sigma B),
\end{align*}
we have $Y_0\ge X_0$ by Lemma \ref{L-fixed-ineq}\,(1).

(ii)\enspace
For $X_0:=A\tau_\sigma B$, from the transformer equality of $\tau,\sigma$ (for invertible $C$)
one has
\begin{align*}
CX_0C=\{(CX_0C)\sigma(CAC)\}\tau\{(CX_0C)\sigma(CBC)\}
\end{align*}
so that $CX_0C=(CAC)\tau_\sigma(CBC)$.

(iii)\enspace
One can choose a $\delta>0$ for which $A_k,B_k\ge\delta I$ for all $k$. Let
$X_k:=A_k\tau_\sigma B_k$. Since $X_k\ge\delta I$ for any $k$ (by the proof of Lemma
\ref{L-fixed}), it follows from joint monotonicity (i) above that $X_k\searrow X_0$ for some
$X_0\in B(\cH)^{++}$. Since
\begin{align*}
X_k=(X_k\sigma A_k)\tau(X_k\sigma B_k)\ \searrow\ (X_0\sigma A)\tau(X_0\sigma B),
\end{align*}
one has $X_0=(X_0\sigma A)\tau(X_0\sigma B)$ so that $X_0=A\tau_\sigma B$.

(iv) is clear.
\end{proof}

For every $A,B\in B(\cH)^+$, by monotonicity (i) of Lemma \ref{L-properties} we can define
$A\tau_\sigma B$ in $B(\cH)^+$ by
\begin{align}\label{ext-mean}
A\tau_\sigma B:=\lim_{\eps\searrow0}(A+\eps I)\tau_\sigma(B+\eps I).
\end{align} 
By property (iii) of Lemma \ref{L-properties} we see that this binary operation
$\tau_\sigma:B(\cH)^+\times B(\cH)^+\to B(\cH)^+$ extends the previous
$\tau_\sigma:B(\cH)^{++}\times B(\cH)^{++}\to B(\cH)^{++}$. Moreover, we have

\begin{lemma}\label{L-ext}
The map $\tau_\sigma$ extended to $B(\cH)^+\times B(\cH)^+$ satisfies joint monotonicity (I) and
downward continuity (III).
\end{lemma}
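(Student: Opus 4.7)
My strategy is to transfer both properties from the $B(\cH)^{++}$-level assertions of Lemma \ref{L-properties} to the $B(\cH)^+$-level via the approximation $A \mapsto A+\eps I$. As a preliminary I would check that for each $A,B\in B(\cH)^+$ the net $(A+\eps I)\tau_\sigma(B+\eps I)$ is genuinely decreasing as $\eps\searrow 0$ (immediate from Lemma \ref{L-properties}(i) applied to $A+\eps_1 I\ge A+\eps_2 I$ and similarly for $B$), so its strong-operator infimum exists in $B(\cH)^+$ and the definition \eqref{ext-mean} is meaningful; consistency with the original $\tau_\sigma$ on $B(\cH)^{++}$ then follows at once from Lemma \ref{L-properties}(iii).

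For joint monotonicity (I), given $A\le A'$ and $B\le B'$ in $B(\cH)^+$, I would observe that $A+\eps I\le A'+\eps I$ and $B+\eps I\le B'+\eps I$ lie in $B(\cH)^{++}$, so Lemma \ref{L-properties}(i) applies to give $(A+\eps I)\tau_\sigma(B+\eps I)\le(A'+\eps I)\tau_\sigma(B'+\eps I)$. Passing to the SOT limit $\eps\searrow 0$ on both sides then yields $A\tau_\sigma B\le A'\tau_\sigma B'$.

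For downward continuity (III), suppose $A_k\searrow A$ and $B_k\searrow B$ in $B(\cH)^+$, and set $X_k:=A_k\tau_\sigma B_k$. By the joint monotonicity of the extended map just established, $X_k$ is decreasing in $k$ with each $X_k\ge A\tau_\sigma B$, so $X_k\searrow Y$ in SOT for some $Y\ge A\tau_\sigma B$. The reverse inequality $Y\le A\tau_\sigma B$ is the main obstacle, since it forces me to interchange the SOT limit in $k$ with the SOT limit in $\eps$ defining $A\tau_\sigma B$. The key device I would use is to fix $\eps>0$ and note that $A_k+\eps I\searrow A+\eps I$ and $B_k+\eps I\searrow B+\eps I$ entirely inside $B(\cH)^{++}$ (in particular they stay bounded below by $\eps I$), so Lemma \ref{L-properties}(iii) yields $(A_k+\eps I)\tau_\sigma(B_k+\eps I)\searrow(A+\eps I)\tau_\sigma(B+\eps I)$ as $k\to\infty$. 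Combining this with the upper bound $X_k\le(A_k+\eps I)\tau_\sigma(B_k+\eps I)$ coming from monotonicity, and letting first $k\to\infty$ and then $\eps\searrow 0$, I obtain $Y\le A\tau_\sigma B$, so $Y=A\tau_\sigma B$ and (III) is proved.
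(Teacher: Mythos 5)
Your proof is correct and follows essentially the same route as the paper: joint monotonicity is transferred directly from Lemma \ref{L-properties}\,(i) through the definition \eqref{ext-mean}, and downward continuity amounts to interchanging the monotone limits in $k$ and $\eps$, justified by Lemma \ref{L-properties}\,(iii). The paper carries out this interchange by swapping the two infima of the quadratic forms $\langle\xi,\{(A_k+\eps I)\tau_\sigma(B_k+\eps I)\}\xi\rangle$, whereas you do it by a two-sided sandwich of operator inequalities followed by letting $k\to\infty$ and then $\eps\searrow0$; the difference is purely presentational.
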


\begin{proof}
Joint monotonicity is obvious from (i) of Lemma \ref{L-properties} and definition
\eqref{ext-mean}. To prove downward continuity, assume that $A_k\searrow A$ and $B_k\searrow B$
in $B(\cH)^+$. For every $\xi\in\cH$ one has
\begin{align*}
\<\xi,(\lim_kA_k\tau_\sigma B_k)\xi\>
&=\inf_k\<\xi,(A_k\tau_\sigma B_k)\xi\> \\
&=\inf_k\inf_{\eps>0}\<\xi,\{(A_k+\eps I)\tau_\sigma(B_k+\eps I)\}\xi\> \\
&=\inf_{\eps>0}\inf_k\<\xi,\{(A_k+\eps I)\tau_\sigma(B_k+\eps I)\}\xi\> \\
&=\inf_{\eps>0}\<\xi,\{(A+\eps I)\tau_\sigma(B+\eps I)\}\xi\> \\
&=\<\xi,(A\tau_\sigma B)\xi\>,
\end{align*}
where the second and the last equalities are by definition \eqref{ext-mean} and the fourth
equality is due to (iii) of Lemma \ref{L-properties}. Therefore,
$A_k\tau_\sigma B_k\searrow A\tau_\sigma B$.
\end{proof}

A remaining requirement for $\tau_\sigma$ on $B(\cH)^+\times B(\cH)^+$ to be an operator mean is
transformer inequality (II). But it does not seem easy to directly prove inequality (II), so we
take a detour by giving the following lemma that was a key also in the proof of the main theorem
of \cite{KA}. 

\begin{lemma}\label{L-proj}
Let $A,B\in B(\cH)^{++}$ and $P\in B(\cH)$ be a projection commuting with $A,B$. Then $P$
commutes with $A\tau_\sigma B$ and
\begin{align}\label{fixed-mean-P}
\{(AP)\tau_\sigma(BP)\}=(A\tau_\sigma B)P.
\end{align}
\end{lemma}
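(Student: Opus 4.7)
The plan is to first show that $P$ commutes with $X_0 := A\tau_\sigma B$ by a unitary symmetry argument, and then to deduce \eqref{fixed-mean-P} through an $\eps$-perturbation that splits the fixed-point equation across the decomposition $\cH = P\cH \oplus (I-P)\cH$.

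For the commutativity, I would introduce the self-adjoint unitary $U := 2P - I$, which commutes with $A$ and $B$. The representing-function formula \eqref{ope-mean} immediately yields unitary covariance $V(C\sigma D)V^* = (VCV^*)\sigma(VDV^*)$ for any unitary $V$ and $C,D\in B(\cH)^{++}$, and similarly for $\tau$. Conjugating the fixed-point equation $X_0 = (X_0\sigma A)\tau(X_0\sigma B)$ by $U$ therefore exhibits $UX_0U^*$ as another solution of \eqref{fixed-eq} for the same $A,B$; by the uniqueness in Lemma \ref{L-fixed}, $UX_0U^* = X_0$, i.e., $P$ commutes with $X_0$.

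For the identity itself, set $A_\eps := AP + \eps I$ and $B_\eps := BP + \eps I$, both in $B(\cH)^{++}$ and commuting with $P$, and let $Y_\eps := A_\eps \tau_\sigma B_\eps$. Applying the commutativity already proved to $A_\eps, B_\eps$, the operator $Y_\eps$ commutes with $P$, hence decomposes as $Y_\eps^{(1)} \oplus Y_\eps^{(2)}$ on $P\cH \oplus (I-P)\cH$; and since $\sigma,\tau$ respect invariant subspaces when their arguments commute with $P$ (immediate from \eqref{ope-mean}), the fixed-point equation decouples. On $P\cH$ it becomes the fixed-point equation for $(A|_{P\cH}+\eps I)\tau_\sigma(B|_{P\cH}+\eps I)$, so by uniqueness $Y_\eps^{(1)}$ equals this. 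On $(I-P)\cH$ it reads $Z = (Z\sigma\eps I)\tau(Z\sigma\eps I) = Z\sigma\eps I$ (using $f_\tau(1)=1$), and since $\sigma\ne\frak{l}$ makes $f_\sigma$ strictly increasing with $f_\sigma(1)=1$, the unique solution is $Y_\eps^{(2)} = \eps I$. Letting $\eps\searrow 0$, \eqref{ext-mean} gives $Y_\eps \searrow (AP)\tau_\sigma(BP)$; on $P\cH$ the limit of $Y_\eps^{(1)}$ is $A|_{P\cH}\tau_\sigma B|_{P\cH}$, which coincides with $(A\tau_\sigma B)|_{P\cH}$ because the restriction of $X_0$ to $P\cH$ also solves the fixed-point equation there and uniqueness applies; on $(I-P)\cH$ the limit is $0$. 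Combining yields $(AP)\tau_\sigma(BP) = (A\tau_\sigma B)P$.

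I expect the main obstacle to be the $(I-P)\cH$-analysis: one must correctly interpret $(AP)\tau_\sigma(BP)$ through the extension \eqref{ext-mean} and then pin down the degenerate equation $Z = Z\sigma\eps I$ by combining the idempotency of $\tau$ with the strict monotonicity of $f_\sigma$ that comes from $\sigma\ne\frak{l}$. A secondary technicality is that the fixed-point construction of Lemma \ref{L-fixed} must be invoked on the (possibly finite-dimensional) summands $P\cH$ and $(I-P)\cH$, but since that proof uses only properties (I)--(IV) it transfers without change.
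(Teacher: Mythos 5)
Your proof is correct, and it differs from the paper's in two respects worth noting. For the commutation $PX_0=X_0P$ the paper does not use your unitary-symmetry argument at all: it constructs solutions $X_0\in B(P\cH)^{++}$ and $X_1\in B((I-P)\cH)^{++}$ of the fixed-point equation on the two summands, observes that $\sigma,\tau$ act componentwise on direct sums, and identifies $X_0+\eps X_1$ (for every $\eps>0$, using homogeneity to scale the second block) as $\{AP+\eps A(I-P)\}\tau_\sigma\{BP+\eps B(I-P)\}$; setting $\eps=1$ gives $X_0+X_1=A\tau_\sigma B$, which yields the commutation, and letting $\eps\searrow0$ with the downward continuity of the extended mean (Lemma \ref{L-ext}) gives $X_0=(AP)\tau_\sigma(BP)$. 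Your conjugation by $U=2P-I$ plus uniqueness in Lemma \ref{L-fixed} is a clean shortcut to the commutation that the paper obtains only as a by-product of the blockwise construction. Your second half is close in spirit to the paper's but uses the regularization $AP+\eps I$, $BP+\eps I$ rather than $AP+\eps A(I-P)$, $BP+\eps B(I-P)$; this has the small advantage of matching the definition \eqref{ext-mean} verbatim, so the limit of the left-hand side is $(AP)\tau_\sigma(BP)$ by definition and no appeal to Lemma \ref{L-ext} is needed, at the price of having to solve the degenerate block equation $Z=Z\sigma\,\eps I$ (your resolution via $W\tau W=W$ and strict monotonicity of $f_\sigma$ for $\sigma\ne\frak{l}$ is fine; alternatively $Z=\eps I$ follows at once from uniqueness applied to the pair $(\eps I,\eps I)$). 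Your closing remark that Lemma \ref{L-fixed} applies on the possibly finite-dimensional summands is exactly the caveat the paper also records, so nothing is missing.
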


\begin{proof}
Although we assumed that $\cH$ is an infinite-dimensional Hilbert space, the whole discussions
up to now are valid for any Hilbert space $\cH$. Let $\cH_0:=P\cH$ and $\cH_1:=(I-P)\cH$. One
can then apply Lemma \ref{L-fixed} to $AP,BP\in B(\cH_0)^{++}$ and
$A(I-P),B(I-P)\in B(\cH_1)^{++}$ to have an $X_0\in B(\cH_0)^{++}$ such that
\begin{align}\label{fixed-0}
X_0=\{X_0\sigma(AP)\}\tau\{X_0\sigma(BP)\},
\end{align}
and an $X_1\in B(\cH_1)^{++}$ such that
\begin{align*}
X_1=\{X_1\sigma(A(I-P))\}\tau\{X_1\sigma(B(I-P))\}.
\end{align*}
For any $\eps>0$, multiplying this with $\eps$ one further has
\begin{align}\label{fixed-2}
\eps X_1=\{(\eps X_1)\sigma(\eps A(I-P))\}\tau\{(\eps X_1)\sigma(\eps B(I-P))\}.
\end{align}
Since the operator means $\sigma,\tau$ are computed component-wise for direct sum operators
$Y_0+Y_1$ with $Y_i\in B(\cH_i)^+$, $i=0,1$, we add \eqref{fixed-0} and
\eqref{fixed-2} to have
$$
X_0+\eps X_1
=\{(X_0+\eps X_1)\sigma(AP+\eps A(I-P))\}\tau\{(X_0+\eps X_1)\sigma(BP+\eps B(I-P))\},
$$
which implies that
\begin{align}\label{mean-0-2}
X_0+\eps X_1=\{AP+\eps A(I-P)\}\tau_\sigma\{BP+\eps B(I-P)\},\qquad\eps>0.
\end{align}
Letting $\eps=1$ in \eqref{mean-0-2} gives
\begin{align}\label{mean-0-1}
X_0+X_1=A\tau_\sigma B.
\end{align}
From the downward continuity of $\tau_\sigma$ (Lemma \ref{L-ext}), letting $\eps\searrow0$ in
\eqref{mean-0-2} gives
\begin{align}\label{mean-0}
X_0=(AP)\tau_\sigma(BP).
\end{align}
From \eqref{mean-0-1} and \eqref{mean-0} we see that $P$ commutes with $A\tau_\sigma B$ and
\eqref{fixed-mean-P} holds.
\end{proof}

We are now in a position to present the main result of the section.

\begin{thm}\label{T-main}
Let $\tau$ and $\sigma$ be operator means with $\sigma\ne\frak{l}$. Then the binary operation
$\tau_\sigma$ first defined by equation \eqref{fixed-eq} for $A,B\in B(\cH)^{++}$ and then
extended by \eqref{ext-mean} for $A,B\in B(\cH)^+$ is an operator mean.

Moreover, the operator monotone function $f_{\tau_\sigma}$ corresponding to $\tau_\sigma$ is
determined in such a way that $x=f_{\tau_\sigma}(t)$ for $t>0$ is a unique solution to
\begin{align}\label{fixed-eq-scalar}
(x\sigma1)\tau(x\sigma t)=x,\qquad x>0,
\end{align}
that is,
\begin{align}\label{fixed-eq-scalar2}
f_\sigma(1/x)f_\tau\biggl({f_\sigma(t/x)\over f_\sigma(1/x)}\biggr)=1,\qquad x>0.
\end{align}
\end{thm}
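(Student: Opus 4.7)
The plan is to verify that $\tau_\sigma$, extended via \eqref{ext-mean}, satisfies all four Kubo--Ando axioms, and then to extract the scalar equations \eqref{fixed-eq-scalar} and \eqref{fixed-eq-scalar2} by specialising the fixed-point equation \eqref{fixed-eq} to commuting scalars. Axioms (I), (III), (IV) are already in hand from Lemmas \ref{L-properties} and \ref{L-ext}, so the only substantive step left is the transformer \emph{inequality} (II); this is the main obstacle, as signalled by the author's introduction of Lemma \ref{L-proj} as a detour.

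For (II) I would run a two-stage $\eps$--$\delta$ argument based on Lemma \ref{L-properties}(ii) (transformer equality for invertible $C$) and the downward continuity of the extension (Lemma \ref{L-ext}). Fix $A,B\in B(\cH)^{++}$ and $C\in B(\cH)^+$, and set $C_\eps:=C+\eps I$, which is positive invertible. Lemma \ref{L-properties}(ii) gives the exact identity
\begin{align*}
C_\eps(A\tau_\sigma B)C_\eps=(C_\eps AC_\eps)\tau_\sigma(C_\eps BC_\eps).
\end{align*}
The norm estimate $\|C_\eps XC_\eps-CXC\|\le\eps(2\|C\|+\eps)\|X\|$ lets me, for each $\delta>0$, pick $\eps$ small enough to force $C_\eps AC_\eps\le CAC+\delta I$ and $C_\eps BC_\eps\le CBC+\delta I$; joint monotonicity (Lemma \ref{L-properties}(i)) then yields
\begin{align*}
C_\eps(A\tau_\sigma B)C_\eps\le(CAC+\delta I)\tau_\sigma(CBC+\delta I).
\end{align*}
Letting $\eps\searrow0$, the left-hand side converges in norm to $C(A\tau_\sigma B)C$, so the inequality passes to the limit; then letting $\delta\searrow0$, the right-hand side decreases in the strong operator topology via Lemma \ref{L-ext} and \eqref{ext-mean} to $CAC\tau_\sigma CBC$. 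A further application of this reasoning with $A+\eta I,\,B+\eta I$ in place of $A,B$, together with \eqref{ext-mean} as $\eta\searrow0$, promotes (II) to all $A,B\in B(\cH)^+$.

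With $\tau_\sigma$ now established as an operator mean, its representing function is $f_{\tau_\sigma}(t)=1\tau_\sigma t$ for $t>0$. Specialising \eqref{fixed-eq} to commuting scalars $A=1,\,B=t$ (Lemma \ref{L-fixed} applies equally well to $\cH=\bC$) identifies $x:=f_{\tau_\sigma}(t)$ as the unique positive solution of \eqref{fixed-eq-scalar}. Using $x\sigma y=xf_\sigma(y/x)$ for positive scalars and the representing function of $\tau$,
\begin{align*}
(x\sigma1)\tau(x\sigma t)=\bigl(xf_\sigma(1/x)\bigr)\tau\bigl(xf_\sigma(t/x)\bigr)=xf_\sigma(1/x)\,f_\tau\biggl(\frac{f_\sigma(t/x)}{f_\sigma(1/x)}\biggr),
\end{align*}
so equating with $x$ and dividing by $x$ delivers \eqref{fixed-eq-scalar2}. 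The delicate point is axiom (II): a $2\times2$ dilation via the isometry $V\xi=(C\xi,(I-C^2)^{1/2}\xi)$ acting on block-diagonal $\hat A=A\oplus\alpha I,\,\hat B=B\oplus\alpha I$ ends up demanding the compression inequality $[\hat X\tau_\sigma\hat Y]_{11}\le[\hat X]_{11}\tau_\sigma[\hat Y]_{11}$, which is transformer inequality for a projection and hence circular. The $\eps$--$\delta$ route above side-steps this by trading the dilation for norm continuity of the sandwich $X\mapsto C_\eps XC_\eps$ at the inner stage and downward continuity of the extended mean at the outer stage.
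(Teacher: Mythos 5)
Your proposal is correct, but it resolves the crucial step by a genuinely different route than the paper. The author explicitly declines to prove the transformer inequality (II) directly and instead takes the detour through Lemma \ref{L-proj}: projections commuting with $A,B$ commute with $A\tau_\sigma B$, and then, following the proof of Theorem 3.6 in \cite{KA}, one gets an operator monotone $f$ with $f(A)=I\tau_\sigma A$ and concludes $\tau_\sigma=m_f$, so that (II) (and the scalar characterization) fall out of the Kubo--Ando form. You instead prove (II) head-on: transformer \emph{equality} for the invertible $C_\eps=C+\eps I$ (Lemma \ref{L-properties}(ii)), the norm estimate forcing $C_\eps AC_\eps\le CAC+\delta I$ and $C_\eps BC_\eps\le CBC+\delta I$, joint monotonicity on $B(\cH)^{++}$, then the limit $\eps\searrow0$ in norm and $\delta\searrow0$, where the right-hand side is \emph{by definition} \eqref{ext-mean} decreasing to $(CAC)\tau_\sigma(CBC)$; a final $\eta\searrow0$ with $A+\eta I,B+\eta I$ and Lemma \ref{L-ext} covers non-invertible $A,B$. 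I checked the steps: the $\delta I$ slack correctly handles the fact that $C_\eps AC_\eta C_\eps\ge CAC$ may fail, every monotonicity application stays inside $B(\cH)^{++}$, and nothing used (Lemmas \ref{L-properties}, \ref{L-ext}, \ref{L-fixed}) depends on (II), so there is no circularity. Your approach is more elementary and bypasses Lemma \ref{L-proj} entirely, at the cost of invoking the Kubo--Ando representation theorem as a black box afterwards to produce $f_{\tau_\sigma}$ (legitimate, since the paper quotes that correspondence in Section 2), whereas the paper's projection argument constructs $f$ and the representation simultaneously and thus never needs a direct verification of (II). The scalar specialization ($xI$ solves \eqref{fixed-eq} for $A=I$, $B=tI$, uniqueness identifies $I\tau_\sigma(tI)=f_{\tau_\sigma}(t)I$, and $a\sigma b=af_\sigma(b/a)$ yields \eqref{fixed-eq-scalar2}) is essentially identical in both treatments.
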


\begin{proof}
Since Lemma \ref{L-proj} implies that $I\tau_\sigma(tI)$ for $t>0$ commutes with all
projections in $B(\cH)$, it follows that there is a function $f$ on $(0,\infty)$ such that
\begin{align*}
f(t)I=I\tau_\sigma(tI),\qquad t>0.
\end{align*}
Here, it is immediate to see that $x=f(t)$ is determined by the numerical equation
\begin{equation}\label{fixed-eq-scalar3}
x=(x\sigma1)\tau(x\sigma t),\qquad x>0,
\end{equation}
and it is also clear that $f(1)=1$. In the same way as in the proof of \cite[Theorem 3.6]{KA}
by using \eqref{fixed-mean-P} (where $P$ commutes $A,B$), one can show that, for every
$A\in B(\cH)^{++}$,
\begin{align}\label{ope-mono-f(A)}
f(A)=I\tau_\sigma A.
\end{align}
This implies that if $A,B\in B(\cH)^{++}$ and $A\le B$, then $f(A)\le f(B)$. Therefore, $f$ is
a positive operator monotone function on $(0,\infty)$, which can be extended to $[0,\infty)$ by
$f(0):=\lim_{t\searrow0}f(t)$.

Finally, by (ii) of Lemma \ref{L-properties} and \eqref{ope-mono-f(A)}, one can write for every
$A,B\in B(\cH)^{++}$
$$
A\tau_\sigma B=A^{1/2}(I\tau_\sigma(A^{-1/2}BA^{-1/2}))A^{1/2}
=A^{1/2}f(A^{-1/2}BA^{-1/2})A^{1/2}.
$$
Therefore, we have
$$
A\tau_\sigma B=Am_fB,\qquad A,B\in B(\cH)^{++},
$$
where $m_f$ is the operator mean corresponding to $f$. From the downward continuity of
$\tau_\sigma$ and $m_f$ on $B(\cH)^+\times B(\cH)^+$, the equality above extends to
$A,B\in B(\cH)^+$. Therefore, $\tau_\sigma=m_f$, that is, $\tau_\sigma$ is an operator mean
with $f_{\tau_\sigma}=f$. The determining equation \eqref{fixed-eq-scalar} was already shown in
\eqref{fixed-eq-scalar3}, and \eqref{fixed-eq-scalar2} is a more explicit rewriting of
\eqref{fixed-eq-scalar}.
\end{proof}

We call the operator mean $\tau_\sigma$ shown by Theorem \ref{T-main} the
\emph{deformed operator mean} from $\tau$ by $\sigma$.

\begin{remark}\label{R-generalization}\rm
More generally than \eqref{fixed-eq} one may consider the equation
\begin{align*}
X=(X\sigma_1A)\tau(X\sigma_2B),\qquad X\in B(\cH)^{++},
\end{align*}
where $\tau,\sigma_1,\sigma_2$ are operator means with $\sigma_1,\sigma_2\ne\frak{l}$. Then
the whole arguments of this section can similarly be done with this generalized equation, so
that one can define the operator mean $\tau_{(\sigma_1,\sigma_2)}$ deformed from $\tau$ by a
pair $(\sigma_1,\sigma_2)$. The corresponding operator monotone function
$f_{\tau_{(\sigma_1,\sigma_2)}}$ is determined in such a way that
$x=f_{\tau_{(\sigma_1,\sigma_2)}}(t)$ is a unique solution to $(x\sigma_1t)\tau(x\sigma_2t)=x$
or
\begin{align*}
f_{\sigma_1}(1/x)f_\tau\biggl({f_{\sigma_2}(t/x)\over f_{\sigma_1}(1/x)}\biggr)=1,
\qquad x>0.
\end{align*}
This generalized setting will be adopted in Section 4 to discuss deformation of multivariate
means of operators.

\end{remark}

\section{Properties and examples}

In this section we will show general properties of the deformed operator mean $\tau_\sigma$
and examine $\tau_\sigma$ when $\sigma$ varies over the weighted power means with two parameters.

There are three important transformations on the operator means \cite{KA}. For an operator mean
$\tau$, the {\it transpose} $\tau'$ of $\tau$ is defined as $A\tau'B:=B\tau A$, whose
representing operator monotone function is $f_{\tau'}(t):=tf_\tau(t^{-1})$. If
$\tau=\tau'$, $\tau$ is said to be {\it symmetric}. The {\it adjoint} $\tau^*$ of
$\tau$ is defined as $A\tau^*B:=(A^{-1}\tau B^{-1})^{-1}$, whose representing function
is $f_{\tau^*}(t)=f_\tau(t^{-1})^{-1}$. If $\tau=\tau^*$, $\tau$ is said to be
{\it self-adjoint}. The {\it dual} $\tau^\perp$ of $\tau$ is defined as
$\tau^\perp:=(\tau')^*=(\tau^*)'$, whose representing function is $t/f_\tau(t)$.

In this section, we assume as in Section 2 that $\tau,\sigma,\tau_1,\sigma_1$ are operator means
with $\sigma,\sigma_1\ne\frak{l}$ (recall that $\frak{l}$ and $\frak{r}$ are the left and the
right trivial means).

\begin{prop}\label{P-properties}
\begin{itemize}
\item[(1)] For $\sigma={\frak{r}}$, $\tau_{\frak{r}}=\tau$.
\item[(2)] $\frak{l}_\sigma=\frak{l}$ and $\frak{r}_\sigma=\frak{r}$.
\item[(3)] If $\tau\le\tau_1$ and $\sigma\le\sigma_1$, then $\tau_\sigma\le(\tau_1)_{\sigma_1}$.
\item[(4)] $(\tau_\sigma)'=(\tau')_\sigma$. Hence, if $\tau$ is symmetric, then so is
$\tau_\sigma$.
\item[(5)] $(\tau_\sigma)^*=(\tau^*)_{\sigma^*}$. Hence, if $\tau,\sigma$ are self-adjoint, then
so is $\tau_\sigma$.
\item[(6)] $(\tau_\sigma)^\perp=(\tau^\perp)_{\sigma^*}$. Hence, if $\tau=\tau^\perp$ and
$\sigma$ is self-adjoint, then $(\tau_\sigma)^\perp=\tau_\sigma$.
\end{itemize}
\end{prop}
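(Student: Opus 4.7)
The plan is to verify each item by direct manipulation of the defining fixed-point equation
\[X=(X\sigma A)\tau(X\sigma B)\]
for $X=A\tau_\sigma B$ (Lemma \ref{L-fixed}), combined with uniqueness and the comparison Lemma \ref{L-fixed-ineq}. For (1), setting $\sigma=\frak{r}$ yields $X\frak{r}A=A$ and $X\frak{r}B=B$, so the equation collapses to $X=A\tau B$ and uniqueness gives $\tau_{\frak{r}}=\tau$. For (2), with $\tau=\frak{l}$ the equation reduces to $X=X\sigma A$; the choice $X=A$ satisfies it because $f_\sigma(1)=1$ forces $A\sigma A=A$, and uniqueness yields $\frak{l}_\sigma=\frak{l}$. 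The equality $\frak{r}_\sigma=\frak{r}$ is symmetric.

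For (3), I would set $Y_0:=A(\tau_1)_{\sigma_1}B$. Since $\sigma\le\sigma_1$ gives $Y_0\sigma A\le Y_0\sigma_1 A$ and likewise for $B$, the joint monotonicity of $\tau$ together with the pointwise order $\tau\le\tau_1$ yields
\[(Y_0\sigma A)\tau(Y_0\sigma B)\le(Y_0\sigma_1 A)\tau(Y_0\sigma_1 B)\le(Y_0\sigma_1 A)\tau_1(Y_0\sigma_1 B)=Y_0,\]
so Lemma \ref{L-fixed-ineq}(1) applied to the pair $(\tau,\sigma)$ produces $Y_0\ge A\tau_\sigma B$.

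For (4)--(6) the strategy is to recast the fixed-point equation under each transformation and identify it with that of another deformed mean. For (4), $A(\tau_\sigma)'B=B\tau_\sigma A$ solves $X=(X\sigma B)\tau(X\sigma A)$, while $A(\tau')_\sigma B$ solves $X=(X\sigma A)\tau'(X\sigma B)$; by the definition of $\tau'$ these are the same equation. For (5), $X:=A(\tau^*)_{\sigma^*}B$ satisfies $X=(X\sigma^*A)\tau^*(X\sigma^*B)$; using $C\sigma^*D=(C^{-1}\sigma D^{-1})^{-1}$ and the analogue for $\tau^*$, taking inverses throughout gives $X^{-1}=(X^{-1}\sigma A^{-1})\tau(X^{-1}\sigma B^{-1})$, so by uniqueness $X^{-1}=A^{-1}\tau_\sigma B^{-1}$, i.e.\ $X=A(\tau_\sigma)^*B$. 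Then (6) follows by composing (4) and (5) via $\tau^\perp=(\tau')^*$:
\[(\tau_\sigma)^\perp=((\tau_\sigma)')^*=((\tau')_\sigma)^*=((\tau')^*)_{\sigma^*}=(\tau^\perp)_{\sigma^*},\]
and the listed corollaries for symmetric, self-adjoint, or $\perp$-invariant means are then immediate.

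I do not anticipate a substantive obstacle: each item reduces to rewriting the fixed-point equation and invoking uniqueness on $B(\cH)^{++}$. The only mild care is in (5)--(6), where inverses are taken and one must stay in $B(\cH)^{++}$ throughout before extending to $B(\cH)^+$ via the downward continuity of Lemma \ref{L-ext}.
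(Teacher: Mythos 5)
Your proposal is correct and follows essentially the same route as the paper: each item is settled by rewriting the defining fixed-point equation under the relevant transformation and invoking the uniqueness of the solution, with (3) handled exactly as in the paper via Lemma \ref{L-fixed-ineq}\,(1) applied at $Y_0=A(\tau_1)_{\sigma_1}B$, and (6) obtained by composing (4) and (5). The extra detail you supply (e.g.\ spelling out (1), (2) and the inverse-taking in (5)) is consistent with the paper's terser argument.
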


\begin{proof}
(1) is trivial.

(2)\enspace
When $\tau=\frak{l}$, equation \eqref{fixed-eq} is $X=X\sigma A$, which has the solution $X=A$.
Similarly, when $\tau=\frak{r}$, \eqref{fixed-eq} has the solution $X=B$.

(3)\enspace
Let $A,B\in B(\cH)^{++}$ and set $X_0:=A\tau_\sigma B$ and $X_1:=A(\tau_1)_{\sigma_1}B$. Since
\begin{align*}
X_1=(X_1\sigma_1A)\tau_1(X_1\sigma_1B)\ge(X_1\sigma A)\tau(X_1\sigma B),
\end{align*}
one has $X_1\ge X_0$ by Lemma \ref{L-fixed-ineq}\,(1).

(4) is clear since
$$
(X\sigma A)\tau'(X\sigma B)=(X\sigma B)\tau(X\sigma A).
$$

(5) is clear since $X=(X\sigma^*A)\tau^*(X\sigma^*B)$ means that
$$
X^{-1}=(X^{-1}\sigma A^{-1})\tau(X^{-1}\sigma B^{-1}).
$$

(6) immediately follows from (4) and (5).
\end{proof}

\begin{prop}\label{deform-geom}
\begin{itemize}
\item[(1)] The representing function of $(\sigma^*)_\sigma$ is determined in such a way that
$x=f_{(\sigma^*)_\sigma}(t)$ is a solution of
$$
xf_\sigma(1/x)=f_\sigma(t/x),\quad\mbox{i.e.},\quad
f_{\sigma'}(x)=f_\sigma(t/x),\qquad x>0.
$$
Moreover, $(\sigma^*)_\sigma=\#$ if and only if $\sigma$ is symmetric.
\item[(2)] The representing function of $(\sigma^\perp)_\sigma$ is determined in such a way
that $x=f_{(\sigma^\perp)_\sigma}(t)$ is a solution of
$$
f_\sigma(1/x)=(x/t)f_\sigma(t/x),\quad\mbox{i.e.},\quad
f_\sigma(1/x)=f_{\sigma'}(x/t),\qquad x>0.
$$
Moreover, $(\sigma^\perp)_\sigma=\#$ if and only if $\sigma$ is symmetric.
\end{itemize}
\end{prop}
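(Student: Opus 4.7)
The plan is to apply the scalar characterization \eqref{fixed-eq-scalar} of the representing function from Theorem \ref{T-main} with the specific choices $\tau=\sigma^*$ for part (1) and $\tau=\sigma^\perp$ for part (2), and simplify using the standard identities $f_{\sigma^*}(u)=1/f_\sigma(1/u)$ and $f_{\sigma^\perp}(u)=u/f_\sigma(u)$. Throughout I will exploit the fact already used in Lemma \ref{L-Thomp}\,(b) that, since $\sigma\ne\frak{l}$, the function $f_\sigma$ is strictly increasing on $(0,\infty)$; in particular $f_\sigma$ is injective, which is exactly what justifies ``cancelling $f_\sigma$'' in the equations below.

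For part (1), I would evaluate the scalar fixed point equation
\begin{align*}
(x\sigma 1)\sigma^*(x\sigma t)=x
\end{align*}
using the scalar formula $a\tau b=af_\tau(b/a)$ together with $f_{\sigma^*}(u)=1/f_\sigma(1/u)$. A short computation reduces this to
\begin{align*}
f_\sigma(1/x)=f_\sigma\!\left({f_\sigma(1/x)\over f_\sigma(t/x)}\right),
\end{align*}
and injectivity of $f_\sigma$ then yields $f_\sigma(t/x)=xf_\sigma(1/x)$, which is the desired equation since by definition $f_{\sigma'}(x)=xf_\sigma(1/x)$. Part (2) is completely parallel: plug $\tau=\sigma^\perp$ and $f_{\sigma^\perp}(u)=u/f_\sigma(u)$ into \eqref{fixed-eq-scalar} and simplify to obtain $f_\sigma(t/x)=f_\sigma(f_\sigma(t/x)/f_\sigma(1/x))$, whence $t/x=f_\sigma(t/x)/f_\sigma(1/x)$, i.e.\ $f_\sigma(1/x)=(x/t)f_\sigma(t/x)=f_{\sigma'}(x/t)$.

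The ``moreover'' statements are handled uniformly. If $\sigma$ is symmetric then $f_{\sigma'}=f_\sigma$, so in case (1) the derived equation becomes $f_\sigma(x)=f_\sigma(t/x)$ and in case (2) it becomes $f_\sigma(1/x)=f_\sigma(x/t)$; in both cases injectivity of $f_\sigma$ forces $x=\sqrt{t}=f_\#(t)$, so the deformed mean is the geometric mean $\#$. For the converse, if $x=\sqrt{t}$ solves the defining equation for every $t>0$, substituting this into the identity $f_{\sigma'}(x)=f_\sigma(t/x)$ (respectively $f_\sigma(1/x)=f_{\sigma'}(x/t)$) gives $f_{\sigma'}(\sqrt{t})=f_\sigma(\sqrt{t})$ for all $t>0$, hence $f_{\sigma'}=f_\sigma$ and $\sigma$ is symmetric.

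There is no real obstacle here; the only subtle point is the use of strict monotonicity/injectivity of $f_\sigma$, which rests on the blanket assumption $\sigma\ne\frak{l}$ and has already been established in the proof of Lemma \ref{L-Thomp}\,(b). Uniqueness of the scalar solution $x=f_{\tau_\sigma}(t)$, which legitimises passing between ``the representing function satisfies'' and ``is determined by'' the stated identity, is guaranteed by Lemma \ref{L-fixed}.
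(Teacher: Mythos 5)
Your proposal is correct and follows essentially the same route as the paper: substitute $\tau=\sigma^*$ (resp.\ $\tau=\sigma^\perp$) into the scalar equation \eqref{fixed-eq-scalar2}, simplify via $f_{\sigma^*}(u)=1/f_\sigma(1/u)$ and $f_{\sigma^\perp}(u)=u/f_\sigma(u)$, and cancel $f_\sigma$ using its strict monotonicity (guaranteed by $\sigma\ne\frak{l}$), with the ``moreover'' parts obtained by checking when $x=\sqrt t$ is the solution, which the paper phrases as requiring the identity at $t=x^2$. Your explicit two-way argument for the equivalence with symmetry, together with the appeal to uniqueness of the scalar solution, matches the paper's (more tersely stated) reasoning.
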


\begin{proof}
(1)\enspace
When $\tau=\sigma^*$, equation \eqref{fixed-eq-scalar2} means that
$$
f_\sigma(1/x)=f_\sigma\bigg({f_\sigma(1/x)\over f_\sigma(t/x)}\biggr),\qquad x>0,
$$
which is equivalent to
$$
1/x={f_\sigma(1/x)\over f_\sigma(t/x)},\quad\mbox{i.e.},\quad
xf_\sigma(1/x)=f_\sigma(t/x),\qquad x>0.
$$
Moreover, $(\sigma^*)_\sigma=\#$ holds if and only if the above holds for any $t=x^2$, that is
equivalent to $\sigma'=\sigma$.

(2)\enspace
When $\tau=\sigma^\perp$, \eqref{fixed-eq-scalar2} becomes
$$
{f_\sigma(t/x)\over f_\sigma\Bigl({f_\sigma(t/x)\over f_\sigma(1/x)}\Bigr)}=1,\qquad x>0,
$$
equivalently,
$$
t/x={f_\sigma(t/x)\over f_\sigma(1/x)},\quad\mbox{i.e.},\quad
f_\sigma(1/x)=(x/t)f_\sigma(t/x),\qquad x>0.
$$
Hence, $(\sigma^\perp)_\sigma=\#$ if and only if the above holds for any $t=x^2$, that is
equivalent to $\sigma'=\sigma$.
\end{proof}

The above proposition in particular says that
\begin{align}\label{harm-arith-geom}
\triangledown_{\,!}=\,!_\triangledown=\#_\#=\#.
\end{align}

In what follows we denote by $\OM_{+,1}(0,\infty)$ the set of non-negative operator monotone
functions $f$ on $[0,\infty)$ with $f(1)=1$, i.e., the set of representing operator monotone
functions of operator means. Recall \cite[(2.3.2)]{Hi} that if $f\in\OM_{+,1}(0,\infty)$ and
$f'(1)=\alpha$, then $\alpha\in[0,1]$ and
\begin{align}\label{alpha-min-max}
{t\over(1-\alpha)t+\alpha}\le f(t)\le(1-\alpha)+\alpha t,\qquad t\in[0,\infty).
\end{align}
When $f\in\OM_{+,1}(0,\infty)$, we note by \eqref{alpha-min-max} that
\begin{align}\label{min-max}
\min\{1,t\}\le f(t)\le\max\{1,t\},\qquad t\in[0,\infty),
\end{align}
and that if $f'(1)=0$ then $f\equiv1$.

The derivative $f_\tau'(1)$ ($\in[0,1]$) provides a significant characteristic of an operator
mean $\tau$. The next proposition says that this characteristic is preserved under taking the
deformed operator mean $\tau_\sigma$ for any $\sigma\ne\frak{l}$.

\begin{prop}\label{P-derivative}
$f_{\tau_\sigma}'(1)=f_\tau'(1)$ for every operator means $\tau,\sigma$ with $\sigma\ne\frak{l}$.
\end{prop}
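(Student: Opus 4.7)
Since $f_{\tau_\sigma}$ is a non-negative operator monotone function on $(0,\infty)$ (as shown in Theorem \ref{T-main}), it is real analytic there and in particular differentiable at $1$. My plan is to compute $f_{\tau_\sigma}'(1)$ by implicitly differentiating the scalar equation \eqref{fixed-eq-scalar2} at $t = 1$, where $x := f_{\tau_\sigma}(t)$ satisfies $x(1) = 1$.

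A preliminary observation is that the standing assumption $\sigma \ne \frak{l}$ forces $f_\sigma'(1) > 0$: by the remark recorded just after \eqref{min-max}, $f_\sigma'(1) = 0$ would imply $f_\sigma \equiv 1$, i.e., $\sigma = \frak{l}$. The same strict monotonicity of $f_\sigma$ also secures $x(1) = 1$, since at $t = 1$ the equation \eqref{fixed-eq-scalar} reduces to $x\sigma 1 = x$, i.e., $f_\sigma(1/x) = 1$, whose unique solution is $x = 1$.

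Writing \eqref{fixed-eq-scalar2} as $F(t) = 1$ with
\begin{equation*}
F(t) := f_\sigma(1/x)\, f_\tau\!\left(\frac{f_\sigma(t/x)}{f_\sigma(1/x)}\right), \qquad x = x(t),
\end{equation*}
I would differentiate in $t$ at $t = 1$. Using $x(1) = 1$, $f_\sigma(1) = f_\tau(1) = 1$, and abbreviating $\gamma := x'(1) = f_{\tau_\sigma}'(1)$, a direct chain-rule calculation produces, after the cross-terms cancel,
\begin{equation*}
0 \;=\; F'(1) \;=\; f_\sigma'(1)\bigl(f_\tau'(1) - \gamma\bigr),
\end{equation*}
and since $f_\sigma'(1) \ne 0$ this yields $\gamma = f_\tau'(1)$, i.e., $f_{\tau_\sigma}'(1) = f_\tau'(1)$.

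The only delicate step is the bookkeeping inside the chain rule: one must keep careful track of how the arguments $1/x$ and $t/x$ vary with both $t$ and $x$, and verify that the contribution with factor $f_\tau(1) = 1$ combines with the contribution carrying $f_\tau'(1)$ to leave exactly the displayed expression. Structurally, however, the whole argument hinges on a single point, namely that $\sigma \ne \frak{l}$ rules out $f_\sigma'(1) = 0$; without this, the identity $F'(1) = 0$ would degenerate to $0 = 0$ and carry no information about $\gamma$.
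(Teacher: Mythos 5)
Your proof is correct and takes essentially the same route as the paper: implicit differentiation of the determining equation \eqref{fixed-eq-scalar2} at $(t,x)=(1,1)$, using that $\sigma\ne\frak{l}$ forces $f_\sigma'(1)>0$, and your resulting identity $F'(1)=f_\sigma'(1)\bigl(f_\tau'(1)-f_{\tau_\sigma}'(1)\bigr)$ is exactly what the chain-rule computation gives. The only real difference is that you differentiate in $t$ (legitimate because $f_{\tau_\sigma}$ is operator monotone, hence analytic, by Theorem \ref{T-main}), whereas the paper passes to the inverse $t=f_{\tau_\sigma}^{-1}(x)$ and differentiates in $x$, which is why it must treat the degenerate case $f_{\tau_\sigma}'(1)=0$ by a separate argument (via $f_\tau(f_\sigma(t))\equiv1$) — a case your parametrization handles uniformly.
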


\begin{proof}
First, assume that $f_{\tau_\sigma}'(1)=0$ and so $f_{\tau_\sigma}\equiv1$. Then $x=1$ is the
solution to \eqref{fixed-eq-scalar2} for every $t>0$, so that $f_\tau(f_\sigma(t))\equiv1$
for all $t>0$. Since $1$ is in the interior of the range of $f_\sigma$ thanks to
$\sigma\ne\frak{l}$, we have $f_\tau'(1)=0=f_{\tau_\sigma}'(1)$.

Next, assume that $f_{\tau_\sigma}'(1)>0$; then $f_{\tau_\sigma}$ is strictly increasing and
the inverse function $t=f_{\tau_\sigma}^{-1}(x)$ exists in a neighborhood of $1$. It follows from
\eqref{fixed-eq-scalar2} that
\begin{align*}
f_\sigma(1/x)f_\tau\biggl({f_\sigma(f_{\tau_\sigma}^{-1}(x)/x)\over f_\sigma(1/x)}\biggr)=1
\end{align*}
holds in a neighborhood of $1$. Now, noting $f_\tau(1)=f_\sigma(1)=f_{\tau_\sigma}^{-1}(1)=1$,
we differentiate the above at $x=1$ to obtain
\begin{align*}
-f_\sigma'(1)+f_\tau'(1)\bigl[f_\sigma'(1)\bigl(f_{\tau_\sigma}'(1)^{-1}-1\bigr)
+f_\sigma'(1)\bigr]=0,
\end{align*}
which, thanks to $f_\sigma'(1)>0$, implies that $f_{\tau_\sigma}'(1)=f_\tau'(1)$.
\end{proof}

\begin{prop}\label{inject}
For every $\sigma\ne\frak{l}$ the map $\tau\mapsto\tau_\sigma$ is injective on the operator
means.
\end{prop}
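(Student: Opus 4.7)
The plan is to use \eqref{fixed-eq-scalar2} to recover $f_\tau$ from $f_{\tau_\sigma}$ pointwise. Suppose $\tau_\sigma=(\tau_1)_\sigma$ for two operator means $\tau,\tau_1$. Then $f_{\tau_\sigma}=f_{(\tau_1)_\sigma}$, and for each $t>0$, setting $x:=f_{\tau_\sigma}(t)$, Theorem~\ref{T-main} applied to both $\tau$ and $\tau_1$ yields
\begin{align*}
f_\sigma(1/x)\,f_\tau\!\left({f_\sigma(t/x)\over f_\sigma(1/x)}\right)
=1
=f_\sigma(1/x)\,f_{\tau_1}\!\left({f_\sigma(t/x)\over f_\sigma(1/x)}\right).
\end{align*}
Hence $f_\tau$ and $f_{\tau_1}$ coincide at every value $s(t):=f_\sigma(t/x(t))/f_\sigma(1/x(t))$, where $x(t):=f_{\tau_\sigma}(t)$.

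Next I would show that the set $\{s(t):t>0\}$ contains an open interval about $1$. Since every positive operator monotone function on $(0,\infty)$ extends to a holomorphic Pick function on the upper half-plane, $f_\sigma$ and $f_{\tau_\sigma}$ are real analytic on $(0,\infty)$, so $s$ is differentiable at $t=1$. A direct chain-rule computation, with $\alpha:=x'(1)=f_{\tau_\sigma}'(1)$, gives $s(1)=1$ and
\begin{align*}
s'(1)=f_\sigma'(1)(1-\alpha)+f_\sigma'(1)\,\alpha=f_\sigma'(1).
\end{align*}
Since $\sigma\ne\frak{l}$, $f_\sigma\not\equiv1$, and the bounds \eqref{alpha-min-max} force $f_\sigma'(1)>0$. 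By the inverse function theorem, $s$ maps a neighborhood of $t=1$ onto an open interval around $1$.

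On that interval we have $f_\tau=f_{\tau_1}$, and by real-analyticity of operator monotone functions on $(0,\infty)$ the equality propagates to all of $(0,\infty)$; continuity at $0$ then extends it to $[0,\infty)$. Therefore $\tau=\tau_1$.

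The main obstacle is the derivative computation together with verifying $f_\sigma'(1)>0$ whenever $\sigma\ne\frak{l}$; both are routine once \eqref{alpha-min-max} is in hand. The underlying idea is that the local behavior of $f_{\tau_\sigma}$ near $t=1$ already determines $f_\tau$ on an interval, after which analyticity finishes the job.
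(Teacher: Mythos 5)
Your proposal is correct and follows essentially the same route as the paper: both recover $f_\tau$ from $f_{\tau_\sigma}$ via the determining equation \eqref{fixed-eq-scalar2} on a nondegenerate interval of arguments around $1$ and then conclude by real analyticity of operator monotone functions. The only cosmetic difference is that you produce this interval by computing $s'(1)=f_\sigma'(1)>0$ at $t=1$ (also absorbing the degenerate case $\tau_\sigma=\frak{l}$), whereas the paper parametrizes by $x=f_{\tau_\sigma}(t)$ through $f_{\tau_\sigma}^{-1}$ and checks $\phi(a)<1<\phi(b)$, treating the case $\tau_\sigma=\frak{l}$ separately.
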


\begin{proof}
First, assume that $\tau_\sigma=\frak{l}$ or $f_{\tau_\sigma}\equiv1$. Then, as in the proof
of the previous proposition, $f_\tau(f_\sigma(t))\equiv1$ for all $t>0$. Since
$f_\sigma\not\equiv1$, we have $\tau=\frak{l}$.

Next, let $\kappa:=\tau_\sigma$ and assume $\kappa\ne\frak{l}$; then the range of $f_\kappa$
contains $[a,b]$ with $0<a<1<b$. As in the proof of the previous proposition, we have
\begin{align*}
f_\tau(\phi(x))={1\over f_\sigma(1/x)},\quad\mbox{where}
\quad\phi(x):={f_\sigma(f_\kappa^{-1}(x)/x)\over f_\sigma(1/x)},\quad x\in[a,b].
\end{align*}
Since $f_\kappa^{-1}(a)<1<f_\kappa^{-1}(b)$,
\begin{align*}
\phi(a)={f_\sigma(f_\kappa^{-1}(a)/a)\over f_\sigma(1/a)}<1,\qquad
\phi(b)={f_\sigma(f_\kappa^{-1}(b)/b)\over f_\sigma(1/b)}>1.
\end{align*}
Therefore, $f_\tau(t)$ on $[\phi(a),\phi(b)]$ is uniquely determined independently of $\tau$.
From the analyticity of $f_\tau$ this implies that $\tau$ is uniquely determined by $\kappa$
and $\sigma$.
\end{proof}

\begin{remark}\label{non-surject}\rm
The map $\tau\mapsto\tau_\sigma$ is not surjective onto the operator means in general. For
instance, when $\tau_\#=\kappa$, \eqref{fixed-eq-scalar2} becomes
$f_\tau(t^{1/2})=f_\kappa(t)^{1/2}$ or $f_\tau(t)=f_\kappa(t^2)^{1/2}$, $t>0$. If
$\kappa=\triangledown$, then $f_\tau(t)=\bigl({1+t^2\over2}\bigr)^{1/2}$, which is not operator
monotone. Hence there is no operator mean $\tau$ satisfying $\tau_\#=\triangledown$. Also,
assume that $\tau_\triangledown=\,!$; then \eqref{fixed-eq-scalar2} becomes
$f_\tau\bigl({x+t\over x+1}\bigr)={2x\over x+1}$ for $x={2t\over1+t}$. Therefore,
\begin{align*}
f_\tau\biggl({x(3-x)\over(x+1)(2-x)}\biggr)={2x\over x+1},\qquad0<x<2,
\end{align*}
from which we have $\lim_{t\searrow0}f_\tau(t)/t=4/3$. On the other hand, since $f_\tau'(1)=1/2$
by Proposition \ref{P-derivative}, it follows from \eqref{alpha-min-max} that
$\lim_{t\searrow0}f_\tau(t)/t\ge2$, a contradiction. Hence no operator mean $\tau$ satisfies
$\tau_\triangledown=\,!$. See Example \ref{E-three-lines} below for more about the deformed
operator means $\tau_\sigma$ by $\sigma=\#_r$ and $\sigma=\triangledown_r$ with $0<r\le1$.
\end{remark}

To show the continuous dependence of $\tau_\sigma$ on $\tau$ and $\sigma$, we prepare a basic
fact on convergence of operator means or their representing operator monotone functions. For
this we first give the next lemma.

\begin{lemma}\label{L-OM}
For every $f\in\OM_{+,1}(0,\infty)$,
\begin{align*}
f'(t)\le\max\{1,(2t)^{-1}\},\qquad t\in(0,\infty).
\end{align*}
Hence, for every $\delta\in(0,1)$,
\begin{align*}
\sup\{f'(t):t\ge\delta,\,f\in\OM_{+,1}(0,\infty)\}\le\delta^{-1}.
\end{align*}
\end{lemma}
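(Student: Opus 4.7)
The plan is to treat the regimes $t\ge 1$ and $0<t<1$ separately, using throughout that every $f\in\OM_{+,1}(0,\infty)$ is concave (being operator concave on $[0,\infty)$).

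For $t\ge 1$ the argument is immediate: concavity makes $f'$ nonincreasing on $(0,\infty)$, and $f'(1)\in[0,1]$ by \eqref{alpha-min-max}; hence $f'(t)\le f'(1)\le 1 = \max\{1,(2t)^{-1}\}$.

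For $0<t<1$ the straightforward bound $f'(t)\le f(t)/t\le 1/t$ (from concavity of $f$ together with $f(0)\ge 0$ and $f(t)\le 1$) is off by a factor of two. To sharpen it, I would pass to the Kubo--Ando dual $g(t):=t/f(t)$, which is again a representing function in $\OM_{+,1}(0,\infty)$, hence concave, with $g(1)=1$. Concavity of $g$ at $t$ evaluated at $1$ gives $1=g(1)\le g(t)+g'(t)(1-t)$, and dividing by $1-t>0$ yields $g'(t)\ge(1-g(t))/(1-t)$. Substituting $g=t/f$, $g'=(f-tf')/f^2$, and clearing denominators (using $f>0$ and $1-t>0$) converts this lower bound on $g'$ into the upper bound
\begin{align*}
f'(t)\le\frac{f(t)\,(1-f(t))}{t\,(1-t)},\qquad 0<t<1.
\end{align*}
By \eqref{min-max}, $f(t)\in[t,1]$ for $t\in(0,1]$, and the parabola $y\mapsto y(1-y)$ on $[t,1]$ attains its maximum $1/4$ at $y=1/2$ when $t\le 1/2$, and its maximum $t(1-t)$ at $y=t$ when $1/2<t<1$. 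Hence $f'(t)\le 1/(4t(1-t))\le 1/(2t)$ in the first subcase (the last inequality because $t\le 1/2\iff 4(1-t)\ge 2$) and $f'(t)\le 1$ in the second; both coincide with $\max\{1,(2t)^{-1}\}$ on the respective subrange.

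The second assertion is then immediate: $\max\{1,(2t)^{-1}\}$ is nonincreasing in $t$, so for $t\ge\delta$ with $\delta\in(0,1)$, $f'(t)\le\max\{1,(2\delta)^{-1}\}\le\delta^{-1}$, using $(2\delta)^{-1}\le\delta^{-1}$ and $1\le\delta^{-1}$. The main maneuver is passing to the dual $g=t/f$ to turn a \emph{lower} bound on $g'$ coming from concavity into the desired \emph{upper} bound on $f'$; this trick is what accounts for the factor $1/2$ in the small-$t$ regime that pure concavity of $f$ alone cannot provide.
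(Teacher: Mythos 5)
Your proof is correct, and it takes a genuinely different route from the paper. The paper works directly from the integral representation $f(t)=a+bt+\int_{(0,\infty)}\frac{t(1+\lambda)}{t+\lambda}\,d\mu(\lambda)$ with $a+b+\mu((0,\infty))=1$, differentiates under the integral, and maximizes the kernel derivative $\phi_t(\lambda)=\frac{\lambda(1+\lambda)}{(t+\lambda)^2}$ over $\lambda$, obtaining exactly the bound $\max\{1,(2t)^{-1}\}$ (with the same intermediate quantity $\frac{1}{4t(1-t)}$ that appears in your small-$t$ case). You instead use only two structural facts: that every $f\in\OM_{+,1}(0,\infty)$ is concave, and that the Kubo--Ando dual $g(t)=t/f(t)$ lies again in $\OM_{+,1}(0,\infty)$ (both standard and consistent with the paper's framework; note also $f(t)>0$ on $(0,\infty)$ by \eqref{min-max}, so $g$ is well defined). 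Your tangent-line computation for $g$ at the point $1$, combined with $t\le f(t)\le 1$ from \eqref{min-max}, correctly yields $f'(t)\le f(t)(1-f(t))/(t(1-t))$ on $(0,1)$, and your case analysis at $t=1/2$ and the use of $f'(1)\le1$ from \eqref{alpha-min-max} for $t\ge1$ close the argument; the deduction of the second assertion is the same as the paper's. What your approach buys is an elementary, representation-free argument (the duality trick converts a concavity lower bound on $g'$ into the needed upper bound on $f'$); what the paper's approach buys is a single uniform computation that makes the constant $\max\{1,(2t)^{-1}\}$ transparent as the exact supremum of the kernel derivatives, without invoking concavity or duality.
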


\begin{proof}
It is well-known (see, e.g., \cite{Bh,Hi}) that an operator monotone function on $[0,\infty)$
admits an integral expression
\begin{align*}
f(t)=a+bt+\int_{(0,\infty)}{t(1+\lambda)\over t+\lambda}\,d\mu(\lambda),\qquad t\in[0,\infty),
\end{align*}
where $a\in\bR$, $b\ge0$ and $\mu$ is a positive finite measure on $(0,\infty)$. When
$f\in\OM_{+,1}(0,\infty)$, one has
\begin{align}\label{int-cond}
a=f(0)\ge0,\qquad f(1)=a+b+\mu((0,\infty))=1.
\end{align}
Compute
\begin{align*}
\phi_t(\lambda):={\partial\over\partial t}\,\biggl[{t(1+\lambda)\over t+\lambda}\biggr]
={\lambda(1+\lambda)\over(t+\lambda)^2}
\end{align*}
and
\begin{align*}
{d\over d\lambda}\,\phi_t(\lambda)={t+(2t-1)\lambda\over(t+\lambda)^3}.
\end{align*}
If $t\ge1/2$, then $\phi_t(\lambda)$ is monotone increasing in $\lambda\in(0,\infty)$ and hence
$\phi_t(\lambda)\le\lim_{\lambda\to\infty}\phi_t(\lambda)=1$. If $0<t<1/2$, then
\begin{align*}
\max_{\lambda\in(0,\infty)}\phi_t(\lambda)
=\phi_t\biggl({t\over1-2t}\biggr)={1\over4t(1-t)}\le{1\over2t}.
\end{align*}
These yield that $\phi_t(\lambda)\le\max\{1,(2t)^{-1}\}$ for all $t,\lambda\in(0,\infty)$.
Hence it follows from Lebesgue's dominated convergence theorem that
\begin{align*}
f'(t)&=b+\int_{(0,\infty)}\phi_t(\lambda)\,d\mu(\lambda) \\
&\le b+\max\{1,(2t)^{-1}\}\mu((0,\infty))\le\max\{1,(2t)^{-1}\},
\end{align*}
where the last inequality is due to \eqref{int-cond}.

The latter assertion is immediate since $\max\{1,(2t)^{-1}\}\le\delta^{-1}$ for any $t\ge\delta$
where $\delta\in(0,1)$.
\end{proof}

\begin{lemma}\label{L-OM-conv}
For operator means $\tau$ and $\tau_k$, $k\in\bN$, the following conditions are equivalent:
\begin{itemize}
\item[(a)] $f_{\tau_k}(t)\to f_\tau(t)$ for any $t\in(0,\infty)$;
\item[(b)] $f_{\tau_k}\to f_\tau$ uniformly on $[\delta,\delta^{-1}]$ for any $\delta\in(0,1)$;
\item[(c)] $A\tau_kB\to A\tau B$ in the norm for every $A,B\in B(\cH)^{++}$.
\end{itemize}
\end{lemma}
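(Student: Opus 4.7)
The plan is to establish the cycle $(c) \Rightarrow (a) \Rightarrow (b) \Rightarrow (c)$, which together with the trivial $(b) \Rightarrow (a)$ covers all pairwise implications. The two easy directions are $(c) \Rightarrow (a)$ and $(b) \Rightarrow (c)$, both direct consequences of the representing-function formula \eqref{ope-mean}.

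For $(c) \Rightarrow (a)$, I would specialize to $A = I$ and $B = tI$ with $t > 0$. Since $I$ and $tI$ commute, $I \tau_k (tI) = f_{\tau_k}(t) I$ and $I \tau (tI) = f_\tau(t) I$, so norm convergence forces $|f_{\tau_k}(t) - f_\tau(t)| \to 0$. For $(b) \Rightarrow (c)$, fix $A, B \in B(\cH)^{++}$ and pick $\delta \in (0,1)$ such that $\delta I \le A^{-1/2} B A^{-1/2} \le \delta^{-1} I$; then the spectrum of $A^{-1/2} B A^{-1/2}$ lies in $[\delta, \delta^{-1}]$. Uniform convergence of $f_{\tau_k}$ to $f_\tau$ on this interval yields $\|f_{\tau_k}(A^{-1/2} B A^{-1/2}) - f_\tau(A^{-1/2} B A^{-1/2})\| \to 0$ through the continuous functional calculus, and sandwiching by $A^{1/2}$ on both sides converts this into $\|A \tau_k B - A \tau B\| \to 0$.

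The heart of the lemma is $(a) \Rightarrow (b)$, and this is where Lemma \ref{L-OM} does the work. By that lemma, every derivative $f_{\tau_k}'$ is bounded by $\delta^{-1}$ on $[\delta, \delta^{-1}]$, so the family $\{f_{\tau_k}\}_k$ is uniformly Lipschitz, hence equicontinuous, on this compact interval. Pointwise convergence plus equicontinuity then yields uniform convergence by a standard $\varepsilon/3$ argument: choose finitely many points $t_1, \dots, t_N \in [\delta, \delta^{-1}]$ so that every $t$ in the interval lies within $\varepsilon \delta / 3$ of some $t_j$, use pointwise convergence at these finitely many points, and bound the remaining error by the Lipschitz constant. (Equivalently, Arzelà--Ascoli applied to any subsequence gives a uniformly convergent subsubsequence whose limit must be $f_\tau$, forcing uniform convergence of the original sequence.) The main obstacle is precisely this step: nothing in $(a)$ alone upgrades pointwise to uniform convergence on a compact subinterval, and the a priori derivative control from Lemma \ref{L-OM} is exactly the ingredient that bridges the gap.
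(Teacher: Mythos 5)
Your proposal is correct and follows essentially the same route as the paper: the crucial step (a) $\Rightarrow$ (b) uses the derivative bound of Lemma \ref{L-OM} to get equicontinuity and upgrade pointwise to uniform convergence, and (b) $\Rightarrow$ (c) is the functional-calculus estimate via the spectrum of $A^{-1/2}BA^{-1/2}$, exactly as in the paper. Your closing of the loop via (c) $\Rightarrow$ (a) with $A=I$, $B=tI$ is only a trivial reorganization of the paper's (b) $\iff$ (c) equivalence through $\|f_{\tau_k}(X)-f_\tau(X)\|=\sup_{t\in\sigma(X)}|f_{\tau_k}(t)-f_\tau(t)|$.
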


\begin{proof}
For any $\delta\in(0,1)$, since Lemma \ref{L-OM} shows that $f_{\tau_k}'(t)$ is uniformly
bounded for all $k$ and all $t\in[\delta,\delta^{-1}]$, it follows that $\{f_{\tau_k}\}$ is
equicontinuous on $[\delta,\delta^{-1}]$. Hence the pointwise convergence of (a) yields the
uniform convergence of $\{f_{\tau_k}\}$ on $[\delta,\delta^{-1}]$, so that we have
(a)$\iff$(b) since (b)$\implies$(a) is trivial.

Next, note that, for every $X\in B(\cH)^{++}$ with the spectrum $\sigma(X)$,
\begin{align*}
\|f_{\tau_k}(X)-f_\tau(X)\|=\sup_{t\in\sigma(X)}|f_{\tau_k}(t)-f_\tau(t)|,
\end{align*}
from which it is easy to see that (b)$\iff$(c).
\end{proof}

For $\tau,\tau_k$ as in Lemma \ref{L-OM-conv} we say that $\tau_k$ \emph{properly converges} to
$\tau$ and write $\tau_k\to\tau$ properly, if the equivalent conditions of the lemma hold. Note
that we do not take care of the convergence of $f_{\tau_k}(t)$ at $t=0$. For example, when
$\tau_k=(\#_{1/k}+\frak{r})/2$, we have $\tau_k\to\triangledown=(\frak{l}+\frak{r})/2$ properly
but $f_{\tau_k}(0)=0\not\to f_\triangledown(0)=1/2$.

\begin{prop}\label{P-conti}
Let $\tau,\sigma$ and $\tau_k,\sigma_k$, $k\in\bN$, be operator means with
$\sigma,\sigma_k\ne\frak{l}$, and assume that $\tau_k\to\tau$ and $\sigma_k\to\sigma$ properly.
Then $(\tau_k)_{\sigma_k}\to \tau_\sigma$ properly. Hence for every $A,B\in B(\cH)^{++}$,
$A(\tau_k)_{\sigma_k}B\to A\tau_\sigma B$ in the operator norm.
\end{prop}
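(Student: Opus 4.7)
The plan is to pass to the scalar level via the characterization \eqref{fixed-eq-scalar2}, so that the problem reduces to showing pointwise convergence of $f_{(\tau_k)_{\sigma_k}}(t) \to f_{\tau_\sigma}(t)$ for every $t \in (0,\infty)$. By Lemma \ref{L-OM-conv} this already delivers proper convergence, and in turn operator-norm convergence $A(\tau_k)_{\sigma_k}B \to A\tau_\sigma B$.

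First I would fix $t \in (0,\infty)$ and set $x_k := f_{(\tau_k)_{\sigma_k}}(t)$ and $x := f_{\tau_\sigma}(t)$. Since $f_{(\tau_k)_{\sigma_k}} \in \OM_{+,1}(0,\infty)$, the estimate \eqref{min-max} traps each $x_k$ in the compact subinterval $[\min\{1,t\},\max\{1,t\}]$ of $(0,\infty)$. By a standard compactness argument, it suffices to show that every convergent subsequence $x_{k_j} \to x^\ast$ satisfies $x^\ast = x$; once that is established, the whole sequence converges to $x$.

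To identify $x^\ast$ I would pass to the limit along such a subsequence in the scalar fixed-point identity
$$f_{\sigma_{k_j}}(1/x_{k_j})\,f_{\tau_{k_j}}\!\left(\frac{f_{\sigma_{k_j}}(t/x_{k_j})}{f_{\sigma_{k_j}}(1/x_{k_j})}\right)=1$$
supplied by Theorem \ref{T-main}. The points $1/x_{k_j}$ and $t/x_{k_j}$ stay in a fixed interval $[\delta,\delta^{-1}]$, so the proper convergence $\sigma_k \to \sigma$ in the uniform sense of Lemma \ref{L-OM-conv}(b), combined with the continuity of $f_\sigma$, yields $f_{\sigma_{k_j}}(1/x_{k_j}) \to f_\sigma(1/x^\ast)$ and $f_{\sigma_{k_j}}(t/x_{k_j}) \to f_\sigma(t/x^\ast)$. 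By \eqref{min-max} the denominator limit $f_\sigma(1/x^\ast)$ is strictly positive, so the inner ratio converges to the positive quantity $f_\sigma(t/x^\ast)/f_\sigma(1/x^\ast)$ and remains in a compact subinterval of $(0,\infty)$; proper convergence $\tau_k \to \tau$ then handles the outer $f_{\tau_{k_j}}$. The resulting limit equation is precisely \eqref{fixed-eq-scalar2} for $\tau_\sigma$ at $x^\ast$, so the uniqueness part of Theorem \ref{T-main} forces $x^\ast = x$.

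The delicate point will be the double use of equicontinuity: we must combine the two proper convergences (of $\sigma_k$ and of $\tau_k$) with the fact that the nested arguments of the $f_{\sigma_k}$'s and $f_{\tau_k}$'s stay in compact subsets of $(0,\infty)$. The boundedness of $\{x_k\}$ away from $0$ and $\infty$ via \eqref{min-max}, together with the uniform derivative bound of Lemma \ref{L-OM}, is exactly what legitimizes both passages to the limit.
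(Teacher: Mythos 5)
Your proposal is correct and follows essentially the same route as the paper's proof: reduce to pointwise convergence via Lemma \ref{L-OM-conv}, trap $x_k=f_{(\tau_k)_{\sigma_k}}(t)$ in a compact subinterval of $(0,\infty)$ by \eqref{min-max}, pass to the limit along a convergent subsequence in the scalar equation \eqref{fixed-eq-scalar2} using the uniform-on-compacts form of proper convergence, and conclude by uniqueness of the solution. No gaps.
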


\begin{proof}
For simplicity write $f_k:=f_{(\tau_k)_{\sigma_k}}$. By Lemma \ref{L-OM-conv} it suffices to
prove the pointwise convergence $f_k(t)\to f_{\tau_\sigma}(t)$ for any $t\in(0,\infty)$. For
each fixed $t\in(0,\infty)$ let $x_k:=f_k(t)$, $k\in\bN$. Then by \eqref{min-max} there is a
$\delta\in(0,1)$ such that $\delta\le x_k\le\delta^{-1}$, $k\in\bN$. Let $x_0$ be any limit
point of $\{x_k\}$ so that $x_0$ is a limit of a subsequence $\{x_{k_j}\}$. By
\eqref{fixed-eq-scalar2} for $\tau_{k_j},\sigma_{k_j}$ one has
\begin{align}\label{fixed-eq-k_j}
f_{\sigma_{k_j}}(1/x_{k_j})
f_{\tau_{k_j}}\Biggl({f_{\sigma_{k_j}}(t/x_{k_j})\over f_{\sigma_{k_j}}(1/x_{k_j})}\Biggr)
=1.
\end{align}
Since $f_{\sigma_{k_j}}\to f_\sigma$ properly, from (b) of Lemma \ref{L-OM-conv} one can easily
see that
\begin{align*}
f_{\sigma_{k_j}}(x_{k_j})\ \longrightarrow\ f_\sigma(x_0),\qquad
f_{\sigma_{k_j}}(t/x_{k_j})\ \longrightarrow\ f_\sigma(t/x_0).
\end{align*}
Moreover, since $f_{\tau_{k_j}}\to f_\tau$ properly, one can similarly have
\begin{align*}
f_{\tau_{k_j}}\Biggl({f_{\sigma_{k_j}}(t/x_{k_j})\over f_{\sigma_{k_j}}(1/x_{k_j})}\Biggr)
\ \longrightarrow\ f_\tau\biggl({f_\sigma(t/x_0)\over f_\sigma(x_0)}\biggr).
\end{align*}
Hence, letting $j\to\infty$ in \eqref{fixed-eq-k_j} gives
\begin{align*}
f_\sigma(1/x_0)f_\tau\biggl({f_\sigma(t/x_0)\over f_\sigma(1/x_0)}\biggr)=1
\end{align*}
so that $x_0=f_{\tau_\sigma}(t)$. Therefore, we find that $f_{\tau_\sigma}(t)$ is a unique limit
point of $\{x_k\}$ so that $x_k=f_k(t)\to f_{\tau_\sigma}(t)$, as desired.
\end{proof}

In the rest of this section we will discuss the deformed operator means from an arbitrary $\tau$
by the weighted power means $\frak{p}_{s,r}$ with two parameters $s\in(0,1]$ and $r\in[-1,1]$.
Recall that $\frak{p}_{s,r}$ is the operator mean corresponding to the operator monotone
function
\begin{align*}
f_{s,r}(t):=(1-s+st^r)^{1/r},\qquad t\in[0,\infty),
\end{align*}
see \eqref{weighted-power} and \eqref{weighted-power2}. Here, we use the convention that
\begin{align}\label{convention}
\frak{p}_{s,0}:=\#_s,
\end{align}
that is justified as $\lim_{r\to0}f_{s,r}(t)=t^s$ for any $t\in[0,\infty)$ so that
$\frak{p}_{s,r}\to\#_s$ properly as $r\to0$. Also, we restrict $s$ to $(0,1]$ in view of
$\frak{p}_{0,r}=\frak{l}$.

For each operator mean $\tau$, we introduce the two-parameter deformation of $\tau$ as
\begin{align}\label{tau-s,r}
\tau_{s,r}:=\tau_{\frak{p}_{s,r}},\qquad s\in(0,1],\ r\in[-1,1].
\end{align}
In particular, we have
\begin{align*}
\tau_{s,-1}=\tau_{\,!_s},\qquad
\tau_{s,0}=\tau_{\#_s},\qquad
\tau_{s,1}=\tau_{\triangledown_s},
\end{align*}
the deformed operator means by the weighted harmonic, the weighted geometric and the weighted
arithmetic means, respectively. Moreover,
\begin{align*}
\tau_{1,r}=\tau_{\frak{r}}=\tau,\qquad r\in[-1,1].
\end{align*}
Note that if $s_k\in(0,1]$ and $r_k\in[-1,1]$, $k\in\bN$, are such that $s_k\to s\in(0,1]$
and $r_k\to r\in[-1,1]$, then $f_{s_k,r_k}(t)\to f_{s,r}(t)$ for any $t\in(0,\infty)$ and
hence Proposition \ref{P-conti} implies that $\tau_{s_k,r_k}\to\tau_{s,r}$ properly. Thus, the
two-parameter deformation $\tau_{s,r}$ ($0<s\le1$, $-1\le r\le1$) of $\tau$ is a continuous
family of operator means bounded with $\tau$ (at $s=1$).

An interesting question here is to find which operator mean appears as the boundary value of
$\tau_{s,r}$ in the limit $s\to0$, which we settle in the following theorem.

\begin{thm}\label{T-tau-boundary}
Let $s_k\in(0,1]$ and $r_k\in[-1,1]$, $k\in\bN$, be such that $s_k\to0$ and $r_k\to r$. Then
$\tau_{s_k,r_k}\to\frak{p}_{\alpha,r}$ properly, where $\alpha:=f_\tau'(1)\in[0,1]$. Thus,
letting
\begin{align}\label{tau-s,r-boundary}
\tau_{0,r}=\frak{p}_{\alpha,r},\qquad r\in[-1,1],
\end{align}
we have a continuous (in the sense of Lemma \ref{L-OM-conv}) family of operator means
$\tau_{s,r}$ with two parameters $s\in[0,1]$ and $r\in[-1,1]$.
\end{thm}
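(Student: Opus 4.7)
The plan is to invoke Lemma~\ref{L-OM-conv} to reduce proper convergence to pointwise convergence of the representing functions $f_{\tau_{s_k,r_k}}$. For fixed $t\in(0,\infty)$ set $x_k:=f_{\tau_{s_k,r_k}}(t)$; by \eqref{min-max} the sequence $\{x_k\}$ lies in the compact set $[\min\{1,t\},\max\{1,t\}]\subset(0,\infty)$, so it suffices to show that every subsequential limit $x_0$ equals $f_{\alpha,r}(t)$. Note that Proposition~\ref{P-conti} cannot be invoked directly, because the limiting deformer $\frak{p}_{0,r}=\frak{l}$ is excluded from Theorem~\ref{T-main}; this degeneracy is precisely what makes the argument nontrivial.

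The core idea is to extract the first nontrivial order in $s_k$ from the scalar fixed-point equation \eqref{fixed-eq-scalar2} for $\tau_{s_k,r_k}$, namely
$$
f_{s_k,r_k}(1/x_k)\,f_\tau\!\left(\frac{f_{s_k,r_k}(t/x_k)}{f_{s_k,r_k}(1/x_k)}\right)=1.
$$
Set $h_r(y):=(y^r-1)/r$ for $r\ne 0$ and $h_0(y):=\log y$, so that $h_r(y)$ is jointly continuous on $[-1,1]\times(0,\infty)$. Expanding $f_{s,r}(y)=(1+s(y^r-1))^{1/r}$ to first order in $s$ yields
$$
f_{s,r}(y)=1+s\,h_r(y)+O(s^2),
$$
with the remainder uniform in $r\in[-1,1]$ and in $y$ on any compact subset of $(0,\infty)$. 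Writing $a_k:=f_{s_k,r_k}(1/x_k)-1=O(s_k)$ and $b_k:=f_{s_k,r_k}(t/x_k)-1=O(s_k)$, and using $f_\tau(1+\eps)=1+\alpha\eps+O(\eps^2)$ with $\alpha=f_\tau'(1)$, the fixed-point equation becomes
$$
(1+a_k)\bigl[1+\alpha(b_k-a_k)+O(s_k^2)\bigr]=1,\qquad\text{i.e.,}\qquad(1-\alpha)a_k+\alpha b_k=O(s_k^2).
$$

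Dividing by $s_k$ and passing to a subsequence along which $x_k\to x_0$ and $r_k\to r$, the above expansion gives $a_k/s_k\to h_r(1/x_0)$ and $b_k/s_k\to h_r(t/x_0)$, whence
$$
(1-\alpha)h_r(1/x_0)+\alpha h_r(t/x_0)=0.
$$
For $r\ne 0$ this rearranges to $x_0^{-r}\bigl[(1-\alpha)+\alpha t^r\bigr]=1$, giving $x_0=((1-\alpha)+\alpha t^r)^{1/r}=f_{\alpha,r}(t)$; for $r=0$ it yields $\log x_0=\alpha\log t$, so $x_0=t^\alpha=f_{\alpha,0}(t)$. In either case $x_0=f_{\alpha,r}(t)$, so this is the unique limit point of $\{x_k\}$, and Lemma~\ref{L-OM-conv} concludes the proof. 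The hard part is thus concentrated in performing the Taylor expansion uniformly in $r_k\in[-1,1]$ (including the potentially delicate case $r_k\to 0$) and in unifying the cases $r\ne 0$ and $r=0$ via the continuous function $h_r$.
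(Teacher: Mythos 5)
Your proposal is correct and takes essentially the same route as the paper: reduction to pointwise convergence of the representing functions via Lemma~\ref{L-OM-conv}, a compactness/subsequence argument for $x_k=f_{\tau_{s_k,r_k}}(t)$, and extraction of the first-order term in $s_k$ from the scalar equation \eqref{fixed-eq-scalar2} using $f_\tau(1+\eps)=1+\alpha\eps+o(\eps)$. The only divergence is organizational: your jointly continuous $h_r$ merges the paper's two separate cases ($r\ne0$, and the delicate $s_k,r_k\to0$ logarithmic expansion) into one computation, and the uniform remainder estimate you flag does hold, since $(y^r-1)^2/r=(y^r-1)\,h_r(y)$ stays bounded for $r\in[-1,1]$ and $y$ in compact subsets of $(0,\infty)$.
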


\begin{proof}
For simplicity write $f_k:=f_{s_k,r_k}$. By Lemma \ref{L-OM-conv} it suffices to show the
convergence $f_k(t)\to f_{\alpha,r}(t)$ for any fixed $t\in(0,\infty)$. Let $x_k:=f_k(t)$,
$k\in\bN$; then as in the proof of Proposition \ref{P-conti}, there is a $\delta\in(0,1)$ such
that $\delta\le x_k\le\delta^{-1}$, $k\in\bN$. It remains to prove that $f_{\alpha,r}(t)$ is a
unique limit point of $\{x_k\}$. For this, by replacing $\{x_k\}$ with a subsequence, we may
assume (for notational brevity) that $\{x_k\}$ itself converges to some $x_0$. By
\eqref{fixed-eq-scalar2} one has
\begin{align}\label{fixed-eq-s,r}
f_k(1/x_k)f_\tau\biggl({f_k(t/x_k)\over f_k(1/x_k)}\biggr)=1.
\end{align}

First, assume that $r\ne0$. Since $r_k\to r$, we may assume that $r_k\in[r/2,1]$, $k\in\bN$,
for $r>0$ and $r_k\in[-1,r/2]$, $k\in\bN$, for $r<0$. Since $s_k\to0$, one has
\begin{align*}
f_k(1/x_k)=\biggl(1-s_k+{s_k\over x_k^{r_k}}\biggr)^{1/r_k}
=1+{s_k\over r_k}\biggl({1\over x_k^{r_k}}-1\biggr)+o(s_k)
\quad\mbox{as $k\to\infty$},
\end{align*}
where $o(s_k)/s_k\to0$ as $k\to\infty$. Similarly,
\begin{align*}
f_k(t/x_k)=1+{s_k\over r_k}\biggl({t^{r_k}\over x_k^{r_k}}-1\biggr)+o(s_k)
\quad\mbox{as $k\to\infty$},
\end{align*}
so that
\begin{align*}
{f_k(t/x_k)\over f_k(1/x_k)}
=1+{s_k\over r_k}\biggl({t^{r_k}\over x_k^{r_k}}-{1\over r_k^{r_k}}\biggr)+o(s_k)
\quad\mbox{as $k\to\infty$}.
\end{align*}
Since $f_\tau(1+x)=1+\alpha x+o(x)$ as $x\to0$, one furthermore has
\begin{align*}
f_\tau\biggl({f_k(t/x_k)\over f_k(1/x_k)}\biggr)
=1+{\alpha s_k\over r_k}\biggl({t^{r_k}\over x_k^{r_k}}-{1\over x_k^{r_k}}\biggr)+o(s_k)
\quad\mbox{as $k\to\infty$}.
\end{align*}
Therefore, by \eqref{fixed-eq-s,r} we obtain
\begin{align*}
\biggl[1+{s_k\over r_k}\biggl({1\over x_k^{r_k}}-1\biggr)+o(s_k)\biggr]
\biggl[1+{\alpha s_k\over r_k}\biggl({t^{r_k}\over x_k^{r_k}}
-{1\over x_k^{r_k}}\biggr)+o(s_k)\biggr]=1,
\end{align*}
that is,
\begin{align*}
{s_k\over r_k}\biggl({1\over x_k^{r_k}}-1\biggr)
+{\alpha s_k\over r_k}\biggl({t^{r_k}\over x_k^{r_k}}-{1\over x_k^{r_k}}\biggr)=o(s_k)
\quad\mbox{as $k\to\infty$}.
\end{align*}
This implies that
\begin{align*}
\lim_{k\to\infty}\biggl[{1\over r_k}\biggl({1\over x_k^{r_k}}-1\biggr)
+{\alpha\over r_k}\biggl({t^{r_k}\over x_k^{r_k}}-{1\over x_k^{r_k}}\biggr)\biggr]=0.
\end{align*}
Therefore,
\begin{align*}
{1\over r}\biggl({1\over x_0^r}-1\biggr)
+{\alpha\over r}\biggl({t^r\over x_0^r}-{1\over x_0^r}\biggr)=0,
\end{align*}
that is, $1-x_0^r+\alpha(t^r-1)=0$, implying that
$x_0=(1-\alpha+\alpha t^r)^{1/r}=f_{\alpha,r}(t)$,

Next, assume that $r=0$, so $s_k,r_k\to0$. Since, as $k\to\infty$,
\begin{align*}
{1\over x_k^{r_k}}-1=e^{-r_k\log x_k}-1=-r_k\log x_k+o(r_k),
\end{align*}
one can compute
\begin{align*}
\log f_k(1/x_k)
&={1\over r_k}\log\biggl[1+s_k\biggl({1\over x_k^{r_k}}-1\biggr)\biggr] \\
&={1\over r_k}\log\bigl[1-s_kr_k\log x_k+s_ko(r_k)\bigr] \\
&={1\over r_k}\bigl[-s_kr_k\log x_k+s_ko(r_k)+o(s_kr_k)\bigr] \\
&=-s_k\log x_k+o(s_k),
\end{align*}
where the last equality follows from
\begin{align*}
{1\over s_k}\biggl[{s_ko(r_k)+o(s_kr_k)\over r_k}\biggr]
={o(r_k)\over r_k}+{o(s_kr_k)\over s_kr_k}\ \longrightarrow\ 0\quad\mbox{as $k\to\infty$}.
\end{align*}
Therefore,
\begin{align*}
f_k(1/x_k)=\exp\bigl[-s_k\log x_k+o(s_k)\bigr]
=1-s_k\log x_k+o(s_k)\quad\mbox{as $k\to\infty$},
\end{align*}
and similarly,
\begin{align*}
f_k(t/x_k)=1-s_k\log{x_k\over t}+o(s_k)\quad\mbox{as $k\to\infty$}.
\end{align*}
Hence we obtain, as in the proof in the case $r\ne0$,
\begin{align*}
\bigl[1-s_k\log x_k+o(s_k)\bigr]
\biggl[1+\alpha s_k\biggl(\log x_k-\log{x_k\over t}\biggr)+o(s_k)\biggr]=1,
\end{align*}
that is, $-s_k\log x_k+\alpha s_k\log t=o(s_k)$ as $k\to\infty$. Therefore,
$\lim_{k\to\infty}(-\log x_k+\alpha\log t)=0$, implying that $x_0=t^\alpha=f_{\alpha,0}(t)$.

The latter statement of the theorem is an immediate consequence of the convergence property
just proved as well as the fact remarked just before the theorem.
\end{proof}

The deformed operator means $\tau_{s,r}$ ($0\le s\le1$, $-1\le r\le1$) constructed above are
drawn in the following figure:
$$
\setlength{\unitlength}{1mm}
\begin{picture}(50,60)(5,-13)
\put(0,0){\line(1,0){60}}
\put(30,0){\line(0,1){45}}
\put(0,0){\line(2,3){30}}
\put(60,0){\line(-2,3){30}}
\put(45,0){\line(-1,3){15}}
\put(-1,-1){$\bullet$}
\put(29,-1){$\bullet$}
\put(59,-1){$\bullet$}
\put(44,-1){$\bullet$}
\put(29,43.5){$\bullet$}
\put(32,44){$\tau$\ ($s=1$)}
\put(65,-1){($s=0$)}
\put(59,-5){$\triangledown_\alpha$}
\put(56,-10){\small($r=1$)}
\put(28.5,-5){$\#_\alpha$}
\put(25,-10){\small($r=0$)}
\put(-1,-5){$!_\alpha$}
\put(-7,-10){\small($r=-1$)}
\put(44,-5){$\frak{p}_{\alpha,r}$}
\put(9,23){$\tau_{\,!_s}$}
\put(23.5,15){$\tau_{\#_s}$}
\put(40.5,15){$\tau_{s,r}$}
\put(45.5,23){$\tau_{\triangledown_s}$}
\end{picture}
$$

\begin{example}\label{E-three-lines}\rm
We here examine the representing operator monotone functions of the deformed operator means
in the three lines $\tau_{s,1}=\tau_{\triangledown_s}$, $\tau_{s,0}=\tau_{\#_s}$ and
$\tau_{s,-1}=\tau_{\,!_s}$ with $s\in[0,1]$. We write $f=f_\tau$ that can be an arbitrary
element of $\OM_{+,1}(0,\infty)$ with $\alpha=f'(1)$.

(1)\enspace
When $\sigma=\#_r$ with $0<r\le1$, equation \eqref{fixed-eq-scalar2} is solved as
$x=f(t^r)^{1/r}$. It is well-known that $f(t^r)^{1/r}\in\OM_{+,1}(0,\infty)$ for any
$r\in(0,1]$, but it seems less well-known that $f(t^r)^{1/r}\to t^\alpha$ properly as
$r\searrow0$, a particular case of Theorem \ref{T-tau-boundary}. One can define a
one-parameter continuous family of ``generalized power means" $\frak{p}_{\tau,r}$ for
$r\in[-1,1]$ by
\begin{align*}
\frak{p}_{\tau,r}:=\begin{cases}\tau_{\#_r} & (0<r\le1), \\
\#_\alpha  & (r=0), \\
(\tau^*)_{\#_{-r}} & (-1\le r<0),\end{cases}
\end{align*}
joining $\tau$ ($r=1$), $\#_\alpha$ ($r=0$) and $\tau^*$ ($r=-1$), whose representing function
is $f(r^r)^{1/r}$ for $r\ne0$. A special case where $\tau=\triangledown_\alpha$ is the
weighted power means $\frak{p}_{\alpha,r}$ for $r\in[-1,1]$ with $\alpha$ fixed, dealt with in
Examples \ref{ex-2} and \ref{ex-3}.

(2)\enspace
When $\sigma=\triangledown_s$ with $0<s\le1$, equation \eqref{fixed-eq-scalar2} becomes
\begin{align}\label{fixed-eq-particular}
f\biggl({(1-r)x+rt\over(1-s)x+s}\biggr)={x\over(1-s)x+s},\qquad x>0,
\end{align}
whose solution is $x=f_{\tau_{\triangledown_s}}(t)$. For instance, when $\tau=\,!_\alpha$,
the above equation means that
\begin{align*}
{(1-s)x+s(1-\alpha)t+s\alpha\over(1-s)x+st}={(1-s)x+s\over x}.
\end{align*}
Solving this we find that the representing function of $(!_\alpha)_{\triangledown_s}$ (where
$0<s<1$) is
\begin{align}\label{harm-arith}
f_{(!_\alpha)_{\triangledown_s}}(t)
={1-\alpha-s+(\alpha-s)t+\sqrt{(1-\alpha-s+(\alpha-s)t)^2+4s(1-s)t}\over2(1-s)}.
\end{align}
When $s=0$, the above right-hand side reduces to $f_{\triangledown_\alpha}(t)$, which is
compatible with the fact that $(!_\alpha)_{\triangledown_s}$ approaches to
$\triangledown_\alpha$ as $s\searrow0$, a particular case of Theorem \ref{T-tau-boundary}.
In particular, when $\sigma=\triangledown_s$ and $\tau=\#$, \eqref{fixed-eq-particular} means
that
\begin{align*}
[(1-s)x+s][(1-s)x+st]=x^2.
\end{align*}
Solving this shows that the representing function of $\#_{\triangledown_s}$ is
\begin{align}\label{geom-arith}
f_{\#_{\triangledown_s}}(t)
={(1-s)(1+t)+\sqrt{(1-s)^2(1+t)^2+4s(2-s)t}\over2(2-s)}.
\end{align}
Note that when $s=0$ the above reduces to $f_\triangledown(t)=(1+t)/2$, which is compatible
with the fact that $\#_{\triangledown_s}$ approaches to $\triangledown$ as $s\searrow0$. 

(3)\enspace
When $\sigma=\,!_s$ with $0<s\le1$, \eqref{fixed-eq-scalar2} becomes
\begin{align*}
f\biggl({(1-s)t+stx\over(1-s)t+sx}\biggr)=1-s+sx,
\end{align*}
whose solution is $x=f_{\tau_{\,!_s}}(t)$. Note that $x=f_{\tau_{\,!_s}}(t)$ is equivalent to
$x^{-1}=f_{(\tau^*)_{\triangledown_s}}(t^{-1})$, as seen from Proposition
\ref{P-properties}\,(5). For instance, we have
$f_{\#_{\,!_s}}(t)=f_{\#_{\triangledown_s}}(t^{-1})^{-1}$ and
$f_{(\triangledown_\alpha)_{\,!_s}}(t)=f_{(!_\alpha)_{\triangledown_s}}(t^{-1})^{-1}$, whose
explicit forms can be computed from \eqref{geom-arith} and \eqref{harm-arith}.
\end{example}

We end the section with some discussions on one-parameter families of operator means. Let
$\{m_\alpha\}_{\alpha\in[0,1]}$ be a one-parameter continuous (in the sense of Lemma
\ref{L-OM-conv}) family of operator means. We say that such a family is \emph{regular} if
$f_{m_\alpha}'(1)=\alpha$ (equivalently, $!_\alpha\le m_\alpha\le\triangledown_\alpha$, as
noted in \eqref{alpha-min-max}) for all $\alpha\in[0,1]$. Note that the condition in particular
contains $m_0=\frak{l}$ and $m_1=\frak{r}$. In \cite{FK} Fujii and Kamei constructed, given a
symmetric operator mean $\sigma$, a regular continuous family of operator means
$\{m_\alpha\}_{\alpha\in[0,1]}$ in the following way:
$$
m_0:=\frak{l},\qquad m_{1/2}:=\sigma,\qquad m_1:=\frak{r},
$$
and inductively
$$
Am_{2k+1\over2^{n+1}}B:=(Am_{k\over2^n}B)m(Am_{k+1\over2^n}B)
$$
for $n,k\in\bN$ with $2k+1<2^{n+1}$. The construction was extended by P\'alfia and Petz \cite{PP}
to an arbitrary (not necessarily symmetric) operator mean $\sigma$ ($\ne\frak{l},\frak{r}$) in
such a way that $m_s=\sigma$ when $s=f_\sigma'(1)$; see \cite{UYY} for the equivalence between
the two constructions for a symmetric $\sigma$. By Propositions \ref{P-derivative} and
\ref{P-conti} note also that if $\{m_\alpha\}_{\alpha\in[0,1]}$ is such a regular continuous
family, then so is the deformed $\{(m_\alpha)_\sigma\}_{\alpha\in[0,1]}$ by any
$\sigma\ne\frak{l}$; for instance, $\{(!_\alpha)_{\triangledown_s}\}_{\alpha\in[0,1]}$ for any
$s\in(0,1]$ given in \eqref{harm-arith}.

Here, of our particular concern is a family of operator means having the
interpolation property, introduced in \cite{Fu,FK} as follows: A continuous family of operator
means $\{m_\alpha\}_{\alpha\in[0,1]}$ is called an \emph{interpolation} family if
\begin{align*}
(am_\alpha b)m_\delta(am_\beta b)=am_{(1-\delta)\alpha+\delta\beta}b,
\qquad a,b\in(0,\infty),
\end{align*}
for all $\alpha,\beta,\delta\in[0,1]$. In this case, for every $\alpha,\beta\in[0,1]$ one has
\begin{align*}
\{(am_\alpha b)m_\beta a\}m_\alpha\{(am_\alpha b)m_\beta b\}
&=\{(am_\alpha b)m_\beta(am_0b)\}m_\alpha\{(am_\alpha b)m_\beta(am_1b)\} \\
&=(am_{(1-\beta)\alpha}b)m_\alpha(am_{(1-\beta)\alpha+\beta}b) \\
&=am_{(1-\alpha)(1-\beta)\alpha+\alpha((1-\beta)\alpha+\beta)}b \\
&=am_\alpha b,
\end{align*}
which implies that $x=1m_\alpha t=f_{m_\alpha}(t)$ is a solution to equation
\eqref{fixed-eq-scalar} for $\tau=m_\alpha$ and $\sigma=m_\beta$, that is,
$(m_\alpha)_{m_\beta}=m_\alpha$ for all $\alpha\in[0,1]$ and $\beta\in(0,1]$. It was proved in
\cite{UYY} that if $\{m_\alpha\}_{\alpha\in[0,1]}$ is a regular continuous family of operator
means, then it is an interpolation family if and only if there exists an $r\in[-1,1]$ such that
$m_\alpha=\frak{p}_{\alpha,r}$ for all $\alpha\in[0,1]$. Therefore, we have, for every
$\alpha\in[0,1]$, $\beta\in(0,1]$ and $r\in[-1,1]$,
\begin{align}\label{power-mean-deform}
(\frak{p}_{\alpha,r})_{\frak{p}_{\beta,r}}=\frak{p}_{\alpha,r}.
\end{align}
In particular, for every $\alpha\in[0,1]$ and $\beta\in(0,1]$,
\begin{align*}
(\triangledown_\alpha)_{\triangledown_\beta}=\triangledown_\alpha,\qquad
(\#_\alpha)_{\#_\beta}=\#_\alpha,\qquad
(!_\alpha)_{\,!_\beta}=\,!_\alpha.
\end{align*}

The following proposition may be worth giving while it is not essentially new.

\begin{prop}\label{P-deform-inv}
For every symmetric operator mean $\sigma$ consider the following conditions:
\begin{itemize}
\item[(i)] $\sigma=\frak{p}_{1/2,r}$ for some $r\in[-1,1]$,
\item[(ii)] $(a\sigma b)\sigma(c\sigma d)=(a\sigma c)\sigma(b\sigma d)$ for all
$a,b,c,d\in(0,\infty)$,
\item[(iii)] $f_\sigma(x)\sigma f_\sigma(y)=f_\sigma(x\sigma y)$ for all $x,y\in(0,\infty)$,
\item[(iv)] $\sigma_\sigma=\sigma$.
\end{itemize}
Then we have (i) $\iff$ (ii) $\implies$ (iii) $\implies$ (iv).
\end{prop}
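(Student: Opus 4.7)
First, (i) $\Rightarrow$ (ii) is a direct computation: writing $a\,\frak{p}_{1/2,r}b=\bigl(\tfrac{a^r+b^r}{2}\bigr)^{1/r}$ for $r\neq0$ and $a\,\frak{p}_{1/2,0}b=\sqrt{ab}$, both $(a\sigma b)\sigma(c\sigma d)$ and $(a\sigma c)\sigma(b\sigma d)$ evaluate to $\bigl(\tfrac{a^r+b^r+c^r+d^r}{4}\bigr)^{1/r}$ (resp.\ $(abcd)^{1/4}$), so they coincide regardless of how the four numbers are paired. The converse (ii) $\Rightarrow$ (i) is the hard part and the main external input. Here I would combine the mediality identity (ii) with the positive homogeneity $(\lambda a)\sigma(\lambda b)=\lambda(a\sigma b)$ of $\sigma$ (which comes from the transformer equality of operator means) and invoke the classical theorem of Acz\'el: a continuous, symmetric, reflexive, medial binary operation on $(0,\infty)$ is quasi-arithmetic, i.e.\ $a\sigma b=\phi^{-1}\bigl(\tfrac{\phi(a)+\phi(b)}{2}\bigr)$ for some continuous strictly monotone $\phi$. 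Homogeneity then pins down $\phi$ up to affine normalization to be either $t\mapsto t^r$ or $t\mapsto\log t$, so $\sigma=\frak{p}_{1/2,r}$, and the requirement that $\sigma$ be an operator mean forces $r\in[-1,1]$. (An alternative route is to apply the Fujii--Kamei construction to the symmetric $\sigma$ and use the characterization of interpolation families from \cite{UYY}.)

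Next, (ii) $\Rightarrow$ (iii) is immediate: specializing (ii) to $a=c=1$, $b=x$, $d=y$ and using $1\sigma1=1$ together with $1\sigma t=f_\sigma(t)$, one reads off
\[
f_\sigma(x)\sigma f_\sigma(y)=(1\sigma x)\sigma(1\sigma y)=(1\sigma1)\sigma(x\sigma y)=f_\sigma(x\sigma y).
\]

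Finally, for (iii) $\Rightarrow$ (iv) I would verify directly that $x=f_\sigma(t)$ solves the scalar fixed-point equation \eqref{fixed-eq-scalar} with $\tau=\sigma$, namely $(x\sigma1)\sigma(x\sigma t)=x$, which determines $f_{\sigma_\sigma}(t)$. Pulling out $x$ by homogeneity, then applying (iii) inside, then using homogeneity once more gives
\[
(x\sigma1)\sigma(x\sigma t)
=x\bigl[f_\sigma(1/x)\sigma f_\sigma(t/x)\bigr]
=x\,f_\sigma\bigl((1/x)\sigma(t/x)\bigr)
=x\,f_\sigma\bigl(f_\sigma(t)/x\bigr),
\]
where the last step uses $(1/x)\sigma(t/x)=(1/x)(1\sigma t)=f_\sigma(t)/x$. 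The fixed-point equation thus reduces to $f_\sigma\bigl(f_\sigma(t)/x\bigr)=1$, and since a non-trivial symmetric $\sigma$ has strictly increasing $f_\sigma$ with $f_\sigma(1)=1$, this forces $x=f_\sigma(t)$. Hence $f_{\sigma_\sigma}=f_\sigma$, i.e.\ $\sigma_\sigma=\sigma$.

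The only real obstacle is the classical characterization step in (ii) $\Rightarrow$ (i); the other three implications are clean algebraic manipulations of the representing function using only homogeneity (transformer equality) and the hypothesized identity.
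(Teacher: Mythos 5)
Your proposal is correct, and on three of the four implications it coincides with the paper's argument: (i)$\implies$(ii) is a direct verification, (ii)$\implies$(iii) is exactly the specialization $a=c=1$, and your (iii)$\implies$(iv) is the same computation as the paper's, just organized around the equation $(x\sigma1)\sigma(x\sigma t)=x$ from \eqref{fixed-eq-scalar} rather than its rewritten form \eqref{fixed-eq-scalar2}; in both cases the conclusion rests on the uniqueness statement of Theorem \ref{T-main} (and you correctly note that a symmetric $\sigma$ is automatically $\ne\frak{l}$ with $f_\sigma$ strictly increasing, since $f_\sigma'(1)=1/2$). Where you genuinely diverge is (ii)$\implies$(i): the paper runs this through the Fujii--Kamei construction of the regular family $\{m_\alpha\}$ with $m_{1/2}=\sigma$, citing \cite[Theorem 1]{Fu} (condition (ii) holds iff $\{m_\alpha\}$ is an interpolation family) and \cite[Theorem 6]{UYY} (interpolation families are exactly $\{\frak{p}_{\alpha,r}\}$), whereas your main route invokes Acz\'el's bisymmetry theorem to get a quasi-arithmetic representation, uses homogeneity to force $\phi(t)=t^r$ or $\log t$, and then uses operator monotonicity of $f_\sigma$ to restrict to $r\in[-1,1]$. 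This alternative is sound, but note what it still owes: you should check Acz\'el's hypotheses explicitly (strict monotonicity in each variable follows from $f_\sigma'(1)=1/2$ and symmetry), and the final step needs the known but nontrivial fact that $\bigl({1+t^r\over2}\bigr)^{1/r}$ fails to be operator monotone for $|r|>1$, which deserves a reference just as much as the paper's citations do. Your parenthetical alternative via Fujii--Kamei and \cite{UYY} is precisely the paper's proof, so either route closes the gap; the Acz\'el route has the advantage of not needing the interpolation-family machinery, while the paper's route stays entirely within the operator-mean literature it already quotes.
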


\begin{proof}
(i)$\implies$(ii) is immediately seen.

(ii)$\implies$(i).\enspace
Let $\{m_\alpha\}_{\alpha\in[0,1]}$ be the regular continuous family constructed in \cite{FK}
(as mentioned above) from $\sigma$ so that $m_{1/2}=\sigma$. It was shown in \cite[Theorem 1]{Fu}
that $\sigma$ satisfies condition (ii) if and only if $\{m_\alpha\}$ is an interpolation family.
Then it follows from \cite[Theorem 6]{UYY} that $\{m_\alpha\}=\{\frak{p}_{\alpha,r}\}_\alpha$
for some $r\in[-1,1]$. Hence $\sigma=m_{1/2}=\frak{p}_{1/2,r}$.

(ii)$\implies$(iii) is obvious by taking $a=c=1$ in (ii), as mentioned in \cite{Fu}.

(iii)$\implies$(iv).\enspace
For every $t>0$ let $x:=f_\sigma(t)$; then one has by (iii)
\begin{align*}
f_\sigma(1/x)f_\sigma\biggl({f_\sigma(t/x)\over f_\sigma(1/x)}\biggr)
&=f_\sigma(1/x)\sigma f_\sigma(t/x)=f_\sigma((1/x)\sigma(t/x)) \\
&=f_\sigma((1/x)f_\sigma(t))=f_\sigma(1)=1.
\end{align*}
This implies by Theorem \ref{T-main} that $\sigma=\sigma_\sigma$.
\end{proof}

\begin{problem}\label{Q-interpolated}\rm
Is there an operator mean $\sigma$ ($\ne\frak{l}$), apart from the weighted power means
$\frak{p}_{\alpha,r}$, such that $\sigma_\sigma=\sigma$? The condition $\sigma_\sigma=\sigma$
seems quite hard to hold unless $\sigma$ is in the family $\frak{p}_{\alpha,r}$. For example,
for $-1\le q\le2$ let $\sigma_q$ be a power difference mean with the representing function
$$
f_q(t)={q-1\over q}\cdot{1-t^q\over1-t^{q-1}};
$$
see \cite[Proposition 4.2]{HK} for the fact that $f_q\in\OM_{+,1}(0,\infty)$ when (and only
when) $-1\le q\le2$. A numerical computation verifies the failure of $\sigma_\sigma=\sigma$ for
$\sigma=\sigma_q$ except when $q=-1,1/2,2$ (the cases of the harmonic, geometric and arithmetic
means).
\end{problem}

\section{Multivariate means}

The aim of this section is to generalize our previous discussions on deformation of operator
means to multivariate means of operators. We here assume, unless otherwise stated, that $\cH$ is
a general Hilbert space, whichever finite-dimensional and infinite-dimensional. In what follows,
we consider an $n$-variable ($n\ge2$) mean of operators
\begin{align*}
M:(B(\cH)^{++})^n\ \longrightarrow\ B(\cH)^{++}
\end{align*}
having the following properties:
\begin{itemize}
\item[(A)] \emph{Joint monotonicity:} If $A_j,B_j\in B(\cH)^{++}$ and $A_j\le B_j$ for
$1\le j\le n$, then $M(A_1,\dots,A_n)\le M(B_1,\dots,B_n)$.
\item[(B)] \emph{Homogeneity:} $M(\alpha A_1,\dots,\alpha A_n)=\alpha M(A_1,\dots,A_n)$ for
every $A_j\in B(\cH)^{++}$ and $\alpha>0$.
\item[(C)] \emph{Monotone continuity:} If $A_{j,k}\in B(\cH)^{++}$, $k\in\bN$, and
$A_{j,k}\searrow A_j\in B(\cH)^{++}$ as $k\to\infty$ for $1\le j\le n$, then
$M(A_{1,k},\dots,A_{n,k})\to M(A_1,\dots,A_n)$ in the strong operator topology. The same
holds if $A_{j,k}\nearrow A_j\in B(\cH)^{++}$ as $k\to\infty$ for $1\le j\le n$.
\item[(D)] \emph{Normalization:} $M(I,\dots,I)=I$.
\end{itemize}

The above properties will always be assumed as minimal requirements for our arguments below on
the deformation of $M$ by operator means. Unlike ($2$-variable) operator means treated in
Sections 2 and 3, we fix the domain of multivariate means to $B(\cH)^{++}$ and do not
consider to extend them to $B(\cH)^+$. In the sequel, we will thus use a simpler notation $\bP$
in place of $B(\cH)^{++}$. We denote by $\bw=(w_1,\dots,w_n)$ a probability weight vector, i.e.,
$w_j\ge0$ with $\sum_{j=1}^nw_j=1$. The most familiar examples of multivariate means are the
weighted arithmetic and harmonic means
\begin{align*}
\cA_\bw(A_1,\dots,A_n):=\sum_{j=1}^nw_jA_j,\qquad
\cH_\bw(A_1,\dots,A_n):=\Biggl(\sum_{j=1}^nw_jA_j^{-1}\Biggr)^{-1}.
\end{align*}
More substantial and recently the most studied examples are the multivariate extensions of
the weighted geometric mean and the weighted power means, as briefly described below.

\begin{example}\label{E-multi-geom}\rm
The weighted multivariate geometric mean $G_\bw(A_1,\dots,A_n)$, variously called the
\emph{Riemannian mean}, the \emph{Karcher mean} and the \emph{Cartan mean}, was introduced for
positive definite matrices by Moakher \cite{Mo} and by Bhatia and Holbrook \cite{BH} in a
Riemannian geometry approach, whose monotonicity property (A) was proved by Lawson and Lim
\cite{LL1}. A significant feature of $G_\bw$ is that it is determined as a unique positive
definite solution to the \emph{Karcher equation}
\begin{align}\label{Karcher}
\sum_{j=1}^nw_j\log X^{-1/2}A_jX^{-1/2}=0,\qquad X\in\bP.
\end{align}
This equation is also used to define $G_\bw(A_1,\dots,A_n)$ for infinite-dimensional positive
operators, see \cite{LL2,LL3}. Note that $G_\bw$ is indeed the extension of the weighted
operator mean $\#_\alpha$ as $G_{(1-\alpha,\alpha)}(A,B)=A\#_\alpha B$.

Although it is not explicitly mentioned in \cite{LL3}, one can easily see from the arguments
there that $G_\bw$ satisfies the monotone continuity (C). (In the finite-dimensional case, this
is obvious from the continuity of $G_\bw$ in the operator norm shown in \cite{LL3}.) Indeed,
assume that $A_{j,k}\searrow A_j\in\bP$ for $1\le j\le n$, and let
$X_k:=G_\bw(A_{1,k},\dots,A_{n,k})$. One can choose a $\delta\in(0,1)$ so that
$\delta I\le A_{j,k}\le\delta^{-1}I$ for all $j,k$. Then $\delta I\le X_k\le\delta^{-1}I$
as well for all $k$ and the monotonicity property implies that $X_k\searrow X_0$ for some
$X_0\in\bP$. Since $(A,Y)\mapsto\log Y^{-1/2}AY^{-1/2}$ is continuous in the strong operator
topology on $\delta I\le A,Y\le\delta^{-1}I$ (see \cite{LL3}, also Remark \ref{R-conti}), it
follows that $X_0$ satisfies the Karcher equation in \eqref{Karcher} for $A_j$, so
$X_0=G_\bw(A_1,\dots,A_n)$. The proof in the case $A_{j,k}\nearrow A_j$ is similar.

From the viewpoint of the fixed point method, it is worth noting that, for every $A_j\in\bP$ and
$0<r\le1$, $X=G_\bw(A_1,\dots,A_n)$ is a unique solution to the equation
\begin{align}\label{eq-multi-geom}
X=G_\bw(X\#_rA_1,\dots,X\#_rA_n),\qquad X\in\bP.
\end{align}
This was shown in \cite[Theorem 4]{HL2} in the setting of probability measures on the positive
definite matrices, but the same proof is valid in the infinite-dimensional case as well.
\end{example}

\begin{example}\label{E-multi-power}\rm
The multivariate weighted power means $P_{\bw,r}(A_1,\dots,A_n)$, where
$r\in[-1,1]\setminus\{0\}$, was introduced for positive definite matrices by Lim and P\'alfia
\cite{LP1}, which was extended to infinite-dimensional operators in \cite{LL2,LL3}. For
$A_j\in\bP$, $P_{\bw,r}(A_1,\dots,A_n)$ is defined as a unique solution to the equation for
$X\in\bP$
\begin{align}
X&=\cA_\bw(X\#_rA_1,\dots,X\#_rA_n)\qquad\,\mbox{for\quad$0<r\le1$}, \label{eq-multi-power1}\\
X&=\cH_\bw(X\#_{-r}A_1,\dots,X\#_{-r}A_n)\quad\mbox{for $-1\le r<0$}, \label{eq-multi-power2}
\end{align}
which are the extension of $\frak{p}_{\alpha,r}$ given in Examples \ref{ex-2} and \ref{ex-3} as
$P_{(1-\alpha,\alpha),r}(A,B)=A\,\frak{p}_{\alpha,r}B$. Clearly,
$P_{\bw,1}=\cA_\bw$ and $P_{\bw,-1}=\cH_\bw$. An important fact proved in
\cite{LP1,LL2,LL3} is that
\begin{align*}
\lim_{r\to0}P_{\bw,r}(A_1,\dots,A_n)=G_\bw(A_1,\dots,A_n)
\end{align*}
in the strong operator topology.

The joint monotonicity (A) of $P_{\bw,r}$ was given in \cite{LP1,LL2,LL3} by a fixed point method.
By an argument similar to that in Example \ref{E-multi-geom} one can show that $P_{\bw,r}$
satisfies the monotone continuity (C). Indeed, assume that $A_{j,k}\searrow A_j\in\bP$ for
$1\le j\le n$, and choose a $\delta\in(0,1)$ such that $\delta I\le A_{j,k}\le\delta^{-1}I$ for
all $j,k$. Let $X_k:=P_{\bw,r}(A_{1,k},\dots,A_{n,k})$; then $X_k\searrow X_0$ for some
$X_0\in\bP$. It then follows that that
\begin{align*}
\cA_\bw(X_k\#_rA_{1,k},\dots,X_k\#_rA_{n,k})
\ &\longrightarrow\ \cA_\bw(X_0\#_rA_1,\dots,X_0\#_rA_n), \\
\cH_\bw(X_k\#_{-r}A_{1,k},\dots,X_k\#_{-r}A_{n,k})
\ &\longrightarrow\ \cH_\bw(X_0\#_{-r}A_1,\dots,X_0\#_{-r}A_n)
\end{align*}
in the strong operator topology for $0<r\le1$ and $-1\le r<0$, respectively. Hence
$X_0=P_{\bw,r}(A_1,\dots,A_n)$. The proof when $A_{j,k}\nearrow A_j$ is similar.
\end{example}

All of the multivariate means $\cA_\bw$, $\cH_\bw$, $G_\bw$ and $P_{\bw,r}$ given so far satisfy
all properties (A)--(D). Additionally they satisfy, among others, the following properties (see
\cite{LL2,LL3,Ya} and references therein), which will be separately treated below.

\begin{itemize}
\item[(E)] \emph{Congruence invariance:} For every $A_j\in\bP$ and any invertible $C\in B(\cH)$,
\begin{align*}
CM(A_1,\dots,A_n)C^*=M(CA_1C^*,\dots,CA_nC^*).
\end{align*}
\item[(F)] \emph{Joint concavity:} For every $A_j,B_j\in\bP$ and $0<\lambda<1$,
\begin{align*}
&M(\lambda A_1+(1-\lambda)B_1,\dots,\lambda A_n+(1-\lambda)B_n) \\
&\qquad\ge\lambda M(A_1,\dots,A_n)+(1-\lambda)M(B_1,\dots,B_n).
\end{align*}
From homogeneity (B) this is equivalent to
\begin{align*}
M(A_1+B_1,\dots,A_n+B_n)\ge M(A_1,\dots,A_n)+M(B_1,\dots,B_n).
\end{align*}
\item[(G)] \emph{$\cA M\cH$ weighted mean inequalities:} With some weight vector $\bw$, for every
$A_j\in\bP$,
\begin{align*}
\cH_\bw(A_1,\dots,A_n)\le M(A_1,\dots,A_n)\le\cA_\bw(A_1,\dots,A_n).
\end{align*}
It is indeed known \cite{LP1,LL2,LL3} that
\begin{align*}
\cH_\bw\le P_{\bw,-s}\le P_{\bw,-r}\le G_\bw\le P_{\bw,r}\le P_{\bw,s}\le\cA_\bw
\quad\mbox{if\ \ $0<r<s<1$}.
\end{align*}
\end{itemize}

From now on, assume that $M:\bP^n\to\bP$ is an $n$-variable mean of operators satisfying (A)--(D)
stated at the beginning of the section, and let $\sigma_1,\dots,\sigma_n$ be operator means (in
the sense of Kubo and Ando) such that $\sigma_j\ne\frak{l}$ for any $j$. For given
$A_1,\dots,A_n\in\bP$ we consider the equation
\begin{align}\label{eq-fixed-M}
X=M(X\sigma_1A_1,\dots,X\sigma_nA_n),\qquad X\in\bP,
\end{align}
which generalizes \eqref{eq-multi-geom}, \eqref{eq-multi-power1} and \eqref{eq-multi-power2}.

The following $\dT$-inequality is well-known for $G_\bw$ and $P_{\bw,r}$ (see \cite{LL3}). The
inequality for $M$ easily follows from (A) and (B) similarly to the proof of Lemma
\ref{L-Thomp}\,(a).

\begin{lemma}\label{L-ineq-M}
For every $A_j,B_j\in\bP$,
\begin{align*}
\dT(M(A_1,\dots,A_n),M(B_1,\dots,B_n))\le\max_{1\le j\le k}\dT(A_j,B_j).
\end{align*}
In particular, this implies that $M(A_1,\dots,A_n)$ is continuous on $\bP^n$ in the operator norm.
\end{lemma}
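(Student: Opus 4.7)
The plan is to mimic the proof of Lemma \ref{L-Thomp}\,(a) almost verbatim, since the only ingredients needed are joint monotonicity (A) and homogeneity (B), both of which are among our standing assumptions on $M$. First, I would set $\alpha := \max_{1\le j\le n}\dT(A_j,B_j)$ and unpack the definition of the Thompson metric to get the two-sided bounds $e^{-\alpha}A_j \le B_j \le e^\alpha A_j$ for every $j$.

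Next, I would apply (A) coordinatewise and then (B) to pull out the scalar, obtaining
\begin{align*}
M(B_1,\dots,B_n) &\le M(e^\alpha A_1,\dots,e^\alpha A_n) = e^\alpha M(A_1,\dots,A_n), \\
M(B_1,\dots,B_n) &\ge M(e^{-\alpha}A_1,\dots,e^{-\alpha}A_n) = e^{-\alpha}M(A_1,\dots,A_n).
\end{align*}
These two inequalities together say exactly that $\dT(M(A_1,\dots,A_n),M(B_1,\dots,B_n)) \le \alpha$, which is the desired bound.

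For the last sentence (continuity of $M$ in the operator norm), I would invoke the Thompson result quoted just after the definition of $\dT$ in Section 2: the topology on $\bP = B(\cH)^{++}$ induced by $\dT$ coincides with the operator norm topology. Hence the $\dT$-Lipschitz estimate just established immediately upgrades to continuity of $M$ on $\bP^n$ in the operator norm.

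I don't anticipate any real obstacle here: the entire argument is a direct transcription of the two-variable case, and no appeal to downward continuity (C) or normalization (D) is needed. The only mild subtlety worth flagging in the write-up is that the inequalities $e^{-\alpha}A_j \le B_j \le e^\alpha A_j$ should be stated as a consequence of $\|\log A_j^{-1/2}B_jA_j^{-1/2}\| \le \alpha$, which is where the definition of $\dT$ enters.
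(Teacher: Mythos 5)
Your argument is exactly the one the paper intends: it states that the inequality "easily follows from (A) and (B) similarly to the proof of Lemma \ref{L-Thomp}\,(a)," and your write-up is precisely that transcription, with the norm-continuity then coming from Thompson's result that the $\dT$-topology coincides with the operator norm topology on $B(\cH)^{++}$. The proposal is correct and matches the paper's approach.
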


\begin{lemma}\label{L-fixed-M}
For every $A_j\in\bP$ there exists a unique $X_0\in B(\cH)^{++}$ which satisfies
\eqref{eq-fixed-M}. Furthermore, we have:
\begin{itemize}
\item[(1)] If $Y\in\bP$ and $Y\ge M(Y\sigma_1A_1,\dots,Y\sigma_nA_n)$, then $Y\ge X_0$.
\item[(2)] If $Y'\in\bP$ and $Y'\le M(Y'\sigma_1A_1,\dots,Y'\sigma_nA_n)$, then $Y'\le X_0$.
\end{itemize}
\end{lemma}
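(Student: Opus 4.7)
The plan is to mimic the proof of Lemma \ref{L-fixed} and Lemma \ref{L-fixed-ineq} from the two-variable case, replacing $\tau$ with $M$ and using Lemma \ref{L-ineq-M} together with monotone continuity (C) of $M$ in place of the corresponding properties of $\tau$.

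First I would introduce the map $F\colon\bP\to\bP$ by
\begin{align*}
F(X):=M(X\sigma_1A_1,\dots,X\sigma_nA_n),
\end{align*}
which is monotone in $X$ by the joint monotonicity of each $\sigma_j$ combined with property (A) of $M$. For \emph{uniqueness}, if $X_0,X_1\in\bP$ are two distinct fixed points of $F$, then Lemma \ref{L-ineq-M} and Lemma \ref{L-Thomp}\,(b) give
\begin{align*}
\dT(X_0,X_1)=\dT(F(X_0),F(X_1))
\le\max_{1\le j\le n}\dT(X_0\sigma_jA_j,X_1\sigma_jA_j)<\dT(X_0,X_1),
\end{align*}
a contradiction (here we use $\sigma_j\ne\frak{l}$ for every $j$).

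For \emph{existence}, choose $\delta\in(0,1)$ with $\delta I\le A_j\le\delta^{-1}I$ for all $j$, and set $Z:=\delta^{-1}I$. By normalization and homogeneity of each $\sigma_j$ and of $M$, $F(Z)\le\delta^{-1}I=Z$, so by monotonicity $Z\ge F(Z)\ge F^2(Z)\ge\cdots$. On the other hand $F(Z)\ge F(\delta I)\ge\delta I$ by the same bounds, and iterating gives $F^k(Z)\ge\delta I$ for all $k$. Hence $F^k(Z)\searrow X_0$ for some $X_0\in\bP$. Downward continuity (III) of each $\sigma_j$ yields $F^k(Z)\sigma_jA_j\searrow X_0\sigma_jA_j$ for each $j$, and then property (C) of $M$ gives
\begin{align*}
F^{k+1}(Z)=M(F^k(Z)\sigma_1A_1,\dots,F^k(Z)\sigma_nA_n)
\ \longrightarrow\ M(X_0\sigma_1A_1,\dots,X_0\sigma_nA_n)
\end{align*}
in the strong operator topology, which forces $X_0=F(X_0)$.

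For part (1), assuming $Y\ge F(Y)$, monotonicity of $F$ yields $Y\ge F(Y)\ge F^2(Y)\ge\cdots$, and replacing $\delta$ by a smaller constant that also bounds $Y$ from below, the argument above shows $F^k(Y)\searrow Y_0$ with $Y_0$ a fixed point of $F$; by uniqueness $Y_0=X_0$, so $Y\ge X_0$. Part (2) is symmetric: $Y'\le F(Y')\le F^2(Y')\le\cdots$, the iterates are bounded above by $\delta^{-1}I$ for a suitable $\delta$, and we invoke the upward-continuity version of (C) of $M$ together with the strong-operator continuity of each $\sigma_j$ on an order interval $[\delta I,\delta^{-1}I]$ recalled in Remark \ref{R-conti} to pass to the limit and conclude that $F^k(Y')\nearrow Y_0=X_0$, hence $Y'\le X_0$.

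The only delicate point is to ensure that monotone limits commute with the compound operation $F$; this requires that each $\sigma_j$ be continuous for monotone nets in the strong operator topology on a bounded order interval (downward continuity is axiomatic, upward continuity is the content of Remark \ref{R-conti}) and that $M$ respects such limits, which is exactly property (C). Everything else is a direct transcription of the two-variable arguments.
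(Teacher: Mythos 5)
Your proposal is correct and follows essentially the same route as the paper, which defines the map $F(X)=M(X\sigma_1A_1,\dots,X\sigma_nA_n)$, obtains uniqueness from Lemma \ref{L-ineq-M} combined with Lemma \ref{L-Thomp}\,(b), and transcribes the existence and comparison arguments of Lemmas \ref{L-fixed} and \ref{L-fixed-ineq}, using (C) of $M$ together with the downward continuity of the $\sigma_j$ and, for part (2), the upward (strong-operator) continuity on bounded order intervals from Remark \ref{R-conti}.
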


\begin{proof}
The proof is similar to that of Lemmas \ref{L-fixed} and \ref{L-fixed-ineq}, by use of a map
$F$ from $\bP$ into itself defined by
\begin{align}\label{F-multi}
F(X):=M(X\sigma_1A_1,\dots,X\sigma_nX_n).
\end{align}
We only confirm the uniqueness of the solution here. Assume that $X_0,X_1\in\bP$ satisfies
\eqref{eq-fixed-M} and $X_0\ne X_1$. By Lemmas \ref{L-ineq-M} and \ref{L-Thomp}\,(b),
\begin{align*}
\dT(X_0,X_1)\le\max_{1\le j\le n}\dT(X_0\sigma_jA_j,X_1\sigma_jA_j)<\dT(X_0,X_1),
\end{align*}
a contradiction.
\end{proof}

For every $A_j\in\bP$, $1\le j\le n$, we write $M_{(\sigma_1,\dots,\sigma_n)}(A_1,\dots,A_n)$ for
the unique solution $X_0\in\bP$ to \eqref{eq-fixed-M} given in Lemma \ref{L-fixed-M}, and so we
have a map
\begin{align}\label{deform-M}
M_{(\sigma_1,\dots,\sigma_n)}:\bP^n\,\longrightarrow\,\bP.
\end{align}

\begin{lemma}\label{L-properties-M}
The map $M_{(\sigma_1,\dots,\sigma_n)}$ satisfies (A)--(D) as $M$ does the same.
\end{lemma}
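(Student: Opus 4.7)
The plan is to verify (A)--(D) for $M_{(\sigma_1,\dots,\sigma_n)}$ by closely mimicking the argument of Lemma \ref{L-properties} for the two-variable case, using Lemma \ref{L-fixed-M} (uniqueness and the two comparison inequalities) as the main tool, together with the inherited properties of $M$ and the Kubo--Ando axioms for the $\sigma_j$.

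Normalization (D) and homogeneity (B) are essentially immediate. For (D), $X=I$ trivially satisfies $M(I\sigma_1 I,\dots,I\sigma_n I)=M(I,\dots,I)=I$, and uniqueness in Lemma \ref{L-fixed-M} closes the argument. For (B), the transformer equality of Kubo--Ando means applied to the scalar $\alpha$ gives $(\alpha X)\sigma_j(\alpha A)=\alpha(X\sigma_j A)$, so if $X_0$ solves \eqref{eq-fixed-M} for $(A_1,\dots,A_n)$, then homogeneity of $M$ yields $M((\alpha X_0)\sigma_1(\alpha A_1),\dots)=\alpha M(X_0\sigma_1 A_1,\dots)=\alpha X_0$, i.e., $\alpha X_0$ solves \eqref{eq-fixed-M} for $(\alpha A_1,\dots,\alpha A_n)$. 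For joint monotonicity (A), let $A_j\le A_j'$ and set $X_0:=M_{(\sigma_1,\dots,\sigma_n)}(A_1,\dots,A_n)$ and $Y_0:=M_{(\sigma_1,\dots,\sigma_n)}(A_1',\dots,A_n')$; using joint monotonicity of each $\sigma_j$ and of $M$ gives
\begin{align*}
Y_0=M(Y_0\sigma_1 A_1',\dots,Y_0\sigma_n A_n')\ge M(Y_0\sigma_1 A_1,\dots,Y_0\sigma_n A_n),
\end{align*}
so Lemma \ref{L-fixed-M}\,(1) yields $Y_0\ge X_0$.

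Monotone continuity (C) is the main work. Assume first $A_{j,k}\searrow A_j$ in $\bP$ and pick $\delta\in(0,1)$ with $\delta I\le A_{j,k}\le\delta^{-1}I$ for all $j,k$ (possible since the $A_{j,k}$ are bounded above by $A_{j,1}$ and below by $A_j\in\bP$). An induction entirely parallel to the proof of Lemma \ref{L-fixed}, applied to the map $F$ in \eqref{F-multi} with parameters $A_{1,k},\dots,A_{n,k}$, shows $\delta I\le X_k\le\delta^{-1}I$ for $X_k:=M_{(\sigma_1,\dots,\sigma_n)}(A_{1,k},\dots,A_{n,k})$; here the key step is that $F(\delta I)\ge M(\delta I,\dots,\delta I)=\delta I$ by (D) and (B) for $M$ together with the monotonicity of each $\sigma_j$. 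By (A) just proved, $\{X_k\}$ is monotone decreasing, hence $X_k\searrow X_0$ in the strong operator topology (SOT) with $X_0\in\bP$. Downward continuity (III) of each Kubo--Ando mean gives $X_k\sigma_j A_{j,k}\searrow X_0\sigma_j A_j$, and then (C) for $M$ gives $X_k=M(X_k\sigma_1 A_{1,k},\dots,X_k\sigma_n A_{n,k})\to M(X_0\sigma_1 A_1,\dots,X_0\sigma_n A_n)$ in SOT. Therefore $X_0$ is a fixed point of \eqref{eq-fixed-M} for $(A_1,\dots,A_n)$, so by uniqueness $X_0=M_{(\sigma_1,\dots,\sigma_n)}(A_1,\dots,A_n)$.

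The upward case $A_{j,k}\nearrow A_j$ follows the same template, except that upward continuity of each $\sigma_j$ is not one of the Kubo--Ando axioms. The substitute is Remark \ref{R-conti}: on any band $\delta I\le A,B\le\delta^{-1}I$ the map $(A,B)\mapsto A\sigma_j B$ is SOT-continuous. Combining this SOT-continuity with the monotonicity of $\sigma_j$ upgrades the convergence to $X_k\sigma_j A_{j,k}\nearrow X_0\sigma_j A_j$, after which (C) for $M$ finishes the argument as before. The hard part is precisely this part of (C): controlling $X_k$ by uniform Thompson-type bounds and then handling the upward direction without an axiomatic upward continuity of the Kubo--Ando means, both of which are routine once one invokes the $F$-iteration bound and Remark \ref{R-conti}.
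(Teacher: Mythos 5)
Your proof is correct and follows essentially the route the paper intends: the paper simply says the argument is ``similar to that of Lemma \ref{L-properties}'' with Lemma \ref{L-fixed-M} used for (A), and your write-up supplies exactly those details, including the $\delta I\le X_k\le\delta^{-1}I$ bounds from the $F$-iteration and the use of Lemma \ref{L-fixed-M}\,(1) for monotonicity. Your treatment of the upward half of (C) via the SOT-continuity of Remark \ref{R-conti} is the same device the paper uses in Lemma \ref{L-fixed-ineq}\,(2), so nothing is missing.
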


\begin{proof}
The proof is similar to that of Lemma \ref{L-properties}, so we omit the details. Note here that
Lemma \ref{L-fixed-M} is used to show (A).
\end{proof}

\begin{remark}\label{R-multi-deform}\rm
When $\cH$ is finite-dimensional, we can prove the unique existence of the solution to equation
\eqref{eq-fixed-M} under (A), (B) and (D) without (C). To see this, choose a $\delta\in(0,1)$
such that $\delta I\le A_j\le\delta^{-1}I$ for $1\le j\le n$, and let $\Sigma_\delta:=\{X\in\bP:
\delta I\le X\le\delta^{-1}I\}$ which is a compact convex subset of the $d^2$-dimensional
Euclidean space $B(\cH)^{sa}$, the space of self-adjoint $X\in B(\cH)$, where $d:=\dim\cH$. It
follows from (A), (B) and (D) of $M$ that if $X\in\Sigma_\delta$ then $F(X)$ given in
\eqref{F-multi} is in $\Sigma_\delta$. Since $F$ is continuous by Lemma \ref{L-ineq-M} where
assumption (C) is unnecessary, the existence of the solution follows from Brouwer's fixed point
theorem, and its uniqueness was shown above. However, under this situation without (C) we are not
able to show that the resulting map \eqref{deform-M} satisfies (A) and (B).
\end{remark}

We call $M_{(\sigma_1,\dots,\sigma_n)}$ the \emph{deformed mean} of operators from $M$ by
$(\sigma_1,\dots,\sigma_n)$. When $\sigma_1=\dots=\sigma_n=\sigma$, we simply write $M_\sigma$
and call it the deformed mean from $M$ by $\sigma$. The deformed operator mean $\tau_\sigma$
discussed in Sections 2 and 3 is the special case where $M=\tau$ is a ($2$-variable) operator
mean and $\sigma_1=\sigma_2=\sigma$. The multivariate weighted power means $P_{\bw,r}$ in
Example \ref{E-multi-power} are typical examples of deformed means as
\begin{align*}
P_{\bw,r}=(\cA_\bw)_{\#_r},\quad P_{\bw,-r}=(\cH_\bw)_{\#_r}\quad\mbox{for\quad$0<r\le1$}.
\end{align*}
From Example \ref{E-multi-geom} note also that
\begin{align*}
G_\bw=(G_\bw)_{\#_r}\quad\mbox{for\quad$0<r\le1$}.
\end{align*}

The \emph{adjoint mean} $M^*$ of $M$ is defined by
\begin{align*}
M^*(A_1,\dots,A_n):=M(A_1^{-1},\dots,A_n^{-1})^{-1},\qquad A_j\in\bP.
\end{align*}
It is immediate to verify that $M^*$ satisfies (A)--(D) as well. The mean $M$ is said to be
\emph{self-adjoint} if $M=M^*$. Note that for multivariate means the term ``dual" is rather used
for $M^*$ as in \cite{LP1,LL2,LL3}, but we prefer to use the term ``adjoint" in accordance with
the case of operator means.

\begin{prop}\label{P-properties-M}
\begin{itemize}
\item[(1)] $M_{\frak{r}}=M$.
\item[(2)] Let $\widehat M$ be an $n$-variable mean with (A)--(D) and $\widehat\sigma_j$,
$1\le j\le n$, be operator means. If $M\le\widehat M$ and $\sigma_j\le\widehat\sigma_j$ for
$1\le j\le n$, then
$M_{(\sigma_1,\dots,\sigma_n)}\le\widehat M_{(\widehat\sigma_1,\dots,\widehat\sigma_n)}$.
\item[(3)] $(M_{(\sigma_1,\dots,\sigma_n)})^*=(M^*)_{(\sigma_1^*,\dots,\sigma_n^*)}$. Hence, if
$M$ and $\sigma_j$ are self-adjoint, then so is $M_{(\sigma_1,\dots,\sigma_n)}$.
\item[(4)] For any permutation $\pi$ on $\{1,\dots,k\}$ define
\begin{align*}
M^\pi(A_1,\dots,A_n):=M(A_{\pi^{-1}(1)},\dots,A_{\pi^{-1}(k)}).
\end{align*}
Then $(M_{(\sigma_1,\dots,\sigma_n)})^\pi=(M^\pi)_{(\sigma_{\pi(1)},\dots,\sigma_{\pi(k)})}$.
Hence, if $M$ is permutation invariant, then so is $M_\sigma$ for any operator mean $\sigma$.
\item[(5)] If $M$ satisfies (E), then $M_{(\sigma_1,\dots,\sigma_n)}$ does the same.
\item[(6)] If $M$ satisfies (F), then $M_{(\sigma_1,\dots,\sigma_n)}$ does the same.
\item[(7)] Assume that $M$ satisfies (G) with a weight vector $\bw$, and let
\begin{align*}
\widehat\bw:=\Biggl(\sum_{j=1}^nw_j\alpha_j\Biggr)^{-1}(w_1\alpha_1,\dots,w_n\alpha_n),
\end{align*}
where $\alpha_j:=f_{\sigma_j}'(1)$, $1\le j\le n$. Then $M_{(\sigma_1,\dots,\sigma_n)}$
satisfies (G) with the weight vector $\widehat\bw$. In particular, for any operator mean
$\sigma$, $M_\sigma$ satisfies (G) with the same $\bw$.
\end{itemize}
\end{prop}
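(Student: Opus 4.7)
My plan is to treat the seven items in parallel through the defining equation \eqref{eq-fixed-M} and the comparison principle of Lemma \ref{L-fixed-M}. Item (1) will be immediate because $X\frak{r}A_j=A_j$ collapses \eqref{eq-fixed-M} to $X=M(A_1,\dots,A_n)$. For (2), I would set $Y_0:=\widehat M_{(\widehat\sigma_1,\dots,\widehat\sigma_n)}(A_1,\dots,A_n)$; from $\sigma_j\le\widehat\sigma_j$ and $M\le\widehat M$ one has
\begin{align*}
Y_0=\widehat M(Y_0\widehat\sigma_1A_1,\dots,Y_0\widehat\sigma_nA_n)\ge M(Y_0\sigma_1A_1,\dots,Y_0\sigma_nA_n),
\end{align*}
and Lemma \ref{L-fixed-M}(1) then gives $Y_0\ge M_{(\sigma_1,\dots,\sigma_n)}(A_1,\dots,A_n)$. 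Items (3) and (4) amount to algebraic manipulations of \eqref{eq-fixed-M}: taking inverses componentwise and using $(A\sigma B)^{-1}=A^{-1}\sigma^*B^{-1}$ converts the equation defining $(M^*)_{(\sigma_1^*,\dots,\sigma_n^*)}$ at $(A_j^{-1})$ into that for $M_{(\sigma_1,\dots,\sigma_n)}$ at $(A_j)$; relabeling the indices gives (4).

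For (5), I would apply the congruence invariance of $M$ together with the transformer equality $C(A\sigma B)C^*=(CAC^*)\sigma(CBC^*)$ for invertible $C$ (immediate from \eqref{ope-mean}) to conclude that $CX_0C^*$ satisfies \eqref{eq-fixed-M} at $(CA_jC^*)$. For (6), setting $X_0,Y_0$ to be the deformed means at $(A_j)$ and $(B_j)$ respectively, I will combine the superadditivity of Kubo--Ando operator means, $(X_0+Y_0)\sigma_j(A_j+B_j)\ge X_0\sigma_jA_j+Y_0\sigma_jB_j$, with the superadditive form of (F) to obtain
\begin{align*}
M((X_0+Y_0)\sigma_1(A_1+B_1),\dots,(X_0+Y_0)\sigma_n(A_n+B_n))\ge X_0+Y_0,
\end{align*}
and then invoke Lemma \ref{L-fixed-M}(2).

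Item (7) is the only substantive computation. Writing $X_0:=M_{(\sigma_1,\dots,\sigma_n)}(A_1,\dots,A_n)$ and $\alpha_j:=f_{\sigma_j}'(1)$, note that $\alpha_j>0$ (by the remark following \eqref{min-max}, since $\sigma_j\ne\frak{l}$), so $\sum_jw_j\alpha_j>0$ and $\widehat\bw$ is a genuine probability vector. The plan is to apply \eqref{alpha-min-max} slotwise, sandwiching $X\sigma_jA_j$ between $X\,!_{\alpha_j}A_j$ and $X\triangledown_{\alpha_j}A_j$. Combining the upper bound with $M\le\cA_\bw$ yields
\begin{align*}
X_0\le\cA_\bw(X_0\triangledown_{\alpha_1}A_1,\dots,X_0\triangledown_{\alpha_n}A_n)=\Biggl(\sum_{j=1}^nw_j(1-\alpha_j)\Biggr)X_0+\sum_{j=1}^nw_j\alpha_jA_j,
\end{align*}
which, using $\sum_jw_j=1$, rearranges to $\bigl(\sum_jw_j\alpha_j\bigr)X_0\le\sum_jw_j\alpha_jA_j$, i.e., $X_0\le\cA_{\widehat\bw}(A_1,\dots,A_n)$. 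The lower bound $X_0\ge\cH_{\widehat\bw}(A_1,\dots,A_n)$ I will obtain dually, using $X\,!_{\alpha_j}A\le X\sigma_jA$, $\cH_\bw\le M$, and $(X\,!_\alpha A)^{-1}=(1-\alpha)X^{-1}+\alpha A^{-1}$ to produce the analogous linear inequality in $X_0^{-1}$. The only mild obstacle will be the bookkeeping in these scalar rearrangements; everything else is standard Kubo--Ando material.
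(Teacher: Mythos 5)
Your proposal is correct, and for items (1)--(6) it is essentially the paper's own argument: (1) by collapse of the equation, (2) via the comparison principle of Lemma \ref{L-fixed-M}\,(1), (3) and (4) by inverting/relabeling the defining equation, (5) by congruence invariance plus the transformer equality for invertible $C$, and (6) by the superadditivity $(X_0+Y_0)\sigma_j(A_j+B_j)\ge X_0\sigma_jA_j+Y_0\sigma_jB_j$ combined with (F) and Lemma \ref{L-fixed-M}\,(2). The only genuine divergence is in (7). The paper first invokes item (2) with the sandwich $!_{\alpha_j}\le\sigma_j\le\triangledown_{\alpha_j}$ and $\cH_\bw\le M\le\cA_\bw$ to get $(\cH_\bw)_{(!_{\alpha_1},\dots,!_{\alpha_n})}\le M_{(\sigma_1,\dots,\sigma_n)}\le(\cA_\bw)_{(\triangledown_{\alpha_1},\dots,\triangledown_{\alpha_n})}$, and then solves the two auxiliary fixed-point equations exactly, identifying the bounding deformed means as $\cH_{\widehat\bw}$ and $\cA_{\widehat\bw}$. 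You instead work directly with the inequality at the fixed point $X_0$: from $X_0\le\cA_\bw(X_0\triangledown_{\alpha_1}A_1,\dots,X_0\triangledown_{\alpha_n}A_n)$ you subtract $\bigl(1-\sum_jw_j\alpha_j\bigr)X_0$ and divide by $\sum_jw_j\alpha_j>0$ (justified, as you note, since $\sigma_j\ne\frak{l}$ forces $\alpha_j>0$), and dually for the harmonic bound via inversion antitonicity. Both computations are the same linear rearrangement at heart; your version is marginally more elementary in that it bypasses the comparison lemma and the uniqueness statement for the auxiliary means, while the paper's version has the small bonus of exhibiting the identities $(\cA_\bw)_{(\triangledown_{\alpha_1},\dots,\triangledown_{\alpha_n})}=\cA_{\widehat\bw}$ and $(\cH_\bw)_{(!_{\alpha_1},\dots,!_{\alpha_n})}=\cH_{\widehat\bw}$ explicitly. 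No gaps.
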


\begin{proof}
(1) is obvious. The proofs of (2) and (3) are similar to those of (3) and (4), respectively, of
Proposition \ref{P-properties}.

(4) immediately follows since
\begin{align*}
M^\pi(X\sigma_{\pi(1)}A_1,\dots,X\sigma_{\pi(n)}A_n)
=M(X\sigma_1A_{\pi^{-1}(1)},\dots,X\sigma_nA_{\pi^{-1}(n)}).
\end{align*}

(5)\enspace
The proof is similar to that of Lemma \ref{L-properties}\,(ii).

(6)\enspace
Let $X_0:=M_{(\sigma_1,\dots,\sigma_n)}(A_1,\dots,A_n)$ and
$Y_0:=M_{(\sigma_1,\dots,\sigma_n)}(B_1,\dots,B_n)$. Since, by \cite[Theorem 3.5]{KA},
\begin{align*}
(X_0+Y_0)\sigma_j(A_j+B_j)\ge(X_0\sigma_jB_j)+(Y_0\sigma_jB_j),
\end{align*}
we have, by (A) and (F) of $M$,
\begin{align*}
&M((X_0+Y_0)\sigma_1(A_1+B_1),\dots,(X_0+Y_0)\sigma_n(A_n+B_n)) \\
&\qquad\ge M(X_0\sigma_1A_1,\dots,X_0\sigma_nA_n)+M(Y_0\sigma_1B_1,\dots,Y_0\sigma_nB_n)
=X_0+Y_0,
\end{align*}
which implies by Lemma \ref{L-fixed-M}\,(2) that
\begin{align*}
M_{(\sigma_1,\dots,\sigma_n)}(A_1+B_1,\dots,A_n+B_n)\ge X_0+Y_0,
\end{align*}
as required.

(7)\enspace
Since $!_{\alpha_j}\le\sigma_j\le\triangledown_{\alpha_j}$, it follows from the assertion (2)
above that
\begin{align*}
(\cH_\bw)_{(!_{\alpha_1},\dots,!_{\alpha_n})}\le M_{(\sigma_1,\dots,\sigma_n)}
\le(\cA_\bw)_{(\triangledown_{\alpha_1},\dots,\triangledown{\alpha_n})}.
\end{align*}
The equation
\begin{align*}
X=\cA_\bw(X\triangledown_{\alpha_1}A_1,\dots,X\triangledown_{\alpha_n}A_n)
\end{align*}
is easily solved as
\begin{align*}
X=\Biggl(\sum_{j=1}^nw_j\alpha_j\Biggr)^{-1}
\sum_{j=1}^nw_j\alpha_jA_j=\cA_{\widehat\bw}(A_1,\dots,A_n),
\end{align*}
while the equation
\begin{align*}
X=\cH_\bw(X\,!_{\alpha_1}A_1,\dots,X\,!_{\alpha_n}A_n)
\end{align*}
is solved as $X=\cH_{\widehat\bw}(A_1,\dots,A_n)$. (The latter also follows from the former and
the assertion (3).) Therefore, $M_{(\sigma_1,\dots,\sigma_n)}$ satisfies (G) with the weight
vector $\widehat\bw$.
\end{proof}

\begin{prop}\label{P-mono-conti-M}
For each $j=1,\dots,n$ let $\sigma_j$ and $\sigma_{j,k}$, $k\in\bN$, be operator means with
$\sigma_j,\sigma_{j,k}\ne\frak{l}$, and assume that $\sigma_{j,k}\searrow\sigma_j$, that is,
$f_{\sigma_{j,k}}(t)\searrow f_{\sigma_j}(t)$ for any $t\in(0,\infty)$ as $k\to\infty$. Then for
every $A_j\in\bP$,
\begin{align*}
M_{(\sigma_{1,k},\dots,\sigma_{n,k})}(A_1\dots,A_n)
\ \searrow\ M_{(\sigma_1,\dots,\sigma_n)}(A_1,\dots,A_n)\quad\mbox{as $k\to\infty$}
\end{align*}
in the strong operator topology. The upward convergence holds similarly when
$\sigma_{j,k}\nearrow\sigma_j$ as $k\to\infty$ for each $j$.
\end{prop}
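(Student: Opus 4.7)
The plan is to prove the downward case; the upward one is symmetric. Set $X_k := M_{(\sigma_{1,k},\dots,\sigma_{n,k})}(A_1,\dots,A_n)$ and $X := M_{(\sigma_1,\dots,\sigma_n)}(A_1,\dots,A_n)$. I first establish that $\{X_k\}$ is decreasing in $k$ and dominates $X$. Since $\sigma_{j,k}\ge\sigma_j$ and $M$ is jointly monotone,
$$X_k = M(X_k\sigma_{1,k}A_1,\dots,X_k\sigma_{n,k}A_n) \ge M(X_k\sigma_1A_1,\dots,X_k\sigma_nA_n),$$
so Lemma \ref{L-fixed-M}(1) yields $X_k\ge X$. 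The same reasoning, comparing the $(k+1)$-th problem against the $k$-th operator means, together with Lemma \ref{L-fixed-M}(2) applied to the fixed-point problem for $(\sigma_{1,k},\dots,\sigma_{n,k})$, gives $X_{k+1}\le X_k$.

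Next, picking $\delta\in(0,1)$ with $\delta I\le A_j\le\delta^{-1}I$ for all $j$, the iteration argument from Lemma \ref{L-fixed} shows $\delta I \le X_k \le \delta^{-1}I$ uniformly in $k$. Hence $X_k\searrow X_0$ in the strong operator topology for some $X_0\in\bP$ with $X_0\ge X$. The proof will be complete once I show $X_0=X$, which I do by passing to the limit in the defining equation $X_k = M(X_k\sigma_{1,k}A_1,\dots,X_k\sigma_{n,k}A_n)$ and invoking uniqueness.

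The central step is to verify that for each $j$,
$$X_k\sigma_{j,k}A_j\ \searrow\ X_0\sigma_jA_j\quad\text{in the strong operator topology.}$$
Monotonicity in $k$ follows from $X_{k+1}\le X_k$ and $\sigma_{j,k+1}\le\sigma_{j,k}$ together with joint monotonicity of operator means. For the limit value, $X_k\to X_0$ strongly under uniform spectral bounds gives $X_k^{\pm 1/2}\to X_0^{\pm 1/2}$ strongly, hence $X_k^{-1/2}A_jX_k^{-1/2}\to X_0^{-1/2}A_jX_0^{-1/2}$ strongly. Lemma \ref{L-OM-conv}(b) then supplies uniform convergence $f_{\sigma_{j,k}}\to f_{\sigma_j}$ on each $[\delta',\delta'^{-1}]$, and combined with the strong-operator continuity of continuous functional calculus on bounded-spectrum self-adjoint operators (as in Remark \ref{R-conti}), one obtains $f_{\sigma_{j,k}}(X_k^{-1/2}A_jX_k^{-1/2}) \to f_{\sigma_j}(X_0^{-1/2}A_jX_0^{-1/2})$ strongly. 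Sandwiching with $X_k^{1/2}\to X_0^{1/2}$ gives the claim.

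Finally, applying hypothesis (C) of $M$ along this monotone decreasing sequence produces $X_k \to M(X_0\sigma_1A_1,\dots,X_0\sigma_nA_n)$ strongly, so $X_0 = M(X_0\sigma_1A_1,\dots,X_0\sigma_nA_n)$, and the uniqueness in Lemma \ref{L-fixed-M} forces $X_0=X$. The principal obstacle is the joint strong convergence in the central step, where both the fixed points $X_k$ and the representing functions $f_{\sigma_{j,k}}$ vary simultaneously; this is overcome by decoupling the two sources of variation through the triangle estimate $\|f_{\sigma_{j,k}}(Y_k)-f_{\sigma_j}(Y_0)\| \le \|f_{\sigma_{j,k}}-f_{\sigma_j}\|_{\infty,[\delta',\delta'^{-1}]} + \text{(strong-operator error)}$, the first term vanishing by Lemma \ref{L-OM-conv} and the second by SOT-continuity of the fixed functional calculus.
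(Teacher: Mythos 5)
Your proof is correct and follows essentially the same route as the paper: monotone decrease of the fixed points via the comparison lemma, the uniform lower bound $\delta I$, passage to the limit in the defining equation through property (C), and uniqueness from Lemma \ref{L-fixed-M}. The only difference is that you spell out the convergence $X_k\sigma_{j,k}A_j\searrow X_0\sigma_jA_j$ (via Lemma \ref{L-OM-conv} and the functional-calculus continuity of Remark \ref{R-conti}), a step the paper simply declares easy to verify.
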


\begin{proof}
Let $X_k:=M_{(\sigma_{1,k},\dots,\sigma_{n,k})}(A_1,\dots,A_n)$ for each $k\in\bN$. It follows
from Lemma \ref{P-properties-M}\,(2) that $X_1\le X_2\le\cdots$. Choose a $\delta>0$ such that
$A_j\ge\delta I$ for $1\le j\le n$. Since
$\delta I\le M((\delta I)\sigma_{1,k}A_1,\dots,(\delta I)\sigma_{n,k}A_n)$, Lemma
\ref{L-fixed-M}\,(2) gives $X_k\ge\delta I$ for all $k$. Therefore, $X_k\searrow X_0$ for some
$X_0\in\bP$. We may now prove that $X_0=M_{(\sigma_1,\dots,\sigma_n)}(A_1,\dots,A_n)$, that is,
$X_0=M(X_0\sigma_1A_1,\dots,X_0\sigma_nA_n)$. From (C) it suffices to show that
$X_k\sigma_{j,k}A_j\searrow X_0\sigma_jA_j$ as $k\to\infty$ for every $j=1,\dots,n$. But this is
easy to verify since $X_k\searrow A_0$ and $\sigma_{j,k}\searrow\sigma_j$. The proof is similar
when $\sigma_{j,k}\nearrow\sigma_j$,
\end{proof}

In the following we present the multivariate versions of Proposition \ref{P-conti} and Theorem
\ref{T-tau-boundary}, while $\cH$ is assumed here to be finite-dimensional.

\begin{prop}\label{P-conti-M}
Assume that $\cH$ is finite-dimensional. For each $j=1,\dots,n$ let $\sigma_j,\sigma_{j,k}$,
$k\in\bN$, be operator means with $\sigma_j,\sigma_{j,k}\ne\frak{l}$, and assume that
$\sigma_{j,k}\to\sigma_j$ properly as $k\to\infty$. Then for every $A_j\in\bP$,
\begin{align*}
M_{(\sigma_{1,k},\dots,\sigma_{n,k})}(A_1,\dots,A_n)
\ \longrightarrow\ M_{(\sigma_1,\dots,\sigma_n)}(A_1,\dots,A_n)
\end{align*}
in the operator norm.
\end{prop}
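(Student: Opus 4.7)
The plan is to mirror the proof of Proposition~\ref{P-conti}, working directly with operator fixed-point equations instead of scalar ones, and exploiting finite-dimensionality to replace the equicontinuity argument with straightforward norm compactness. Set
\[
X_k := M_{(\sigma_{1,k},\dots,\sigma_{n,k})}(A_1,\dots,A_n), \qquad X_0 := M_{(\sigma_1,\dots,\sigma_n)}(A_1,\dots,A_n),
\]
and let $F_k(X) := M(X\sigma_{1,k}A_1,\dots,X\sigma_{n,k}A_n)$, so that $X_k = F_k(X_k)$.

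First I would establish a uniform two-sided bound $\delta I \le X_k \le \delta^{-1}I$ for some $\delta \in (0,1)$ independent of $k$. Pick $\delta\in(0,1)$ with $\delta I \le A_j \le \delta^{-1}I$ for all $j$. Using \eqref{min-max} for each $\sigma_{j,k}$ one has $\delta I \le X\sigma_{j,k}A_j \le \delta^{-1}I$ whenever $\delta I \le X \le \delta^{-1}I$, so (A), (B), (D) of $M$ give $F_k(\delta I) \ge \delta I$ and $F_k(\delta^{-1}I) \le \delta^{-1}I$; Lemma~\ref{L-fixed-M}\,(1)--(2) then force $\delta I \le X_k \le \delta^{-1}I$. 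Since $\cH$ is finite-dimensional, the order interval $\{X \in \bP : \delta I \le X \le \delta^{-1}I\}$ is norm-compact, so every subsequence of $\{X_k\}$ has a further subsequence $X_{k_\ell}$ converging in operator norm to some $X_\infty \in \bP$.

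The central step is to show that any such norm-limit point $X_\infty$ satisfies the limit fixed-point equation $X_\infty = M(X_\infty\sigma_1A_1,\dots,X_\infty\sigma_nA_n)$. By the uniqueness asserted in Lemma~\ref{L-fixed-M}, this forces $X_\infty = X_0$, and the standard ``every subsequence has a sub-subsequence converging to $X_0$'' argument then yields $X_k \to X_0$ in operator norm. By Lemma~\ref{L-ineq-M}, $M$ is jointly continuous on $\bP^n$ in operator norm, so it suffices to prove, for each $j$,
\[
X_{k_\ell}\sigma_{j,k_\ell}A_j \ \longrightarrow\ X_\infty\sigma_jA_j \qquad \text{in operator norm}.
\]

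The main obstacle is this joint convergence, in which both the ``state'' $X_{k_\ell}$ and the operator mean $\sigma_{j,k_\ell}$ are varying simultaneously. I would handle it by a triangle-inequality split
\[
\|X_{k_\ell}\sigma_{j,k_\ell}A_j - X_\infty\sigma_jA_j\|
\le \|X_{k_\ell}\sigma_{j,k_\ell}A_j - X_\infty\sigma_{j,k_\ell}A_j\|
+ \|X_\infty\sigma_{j,k_\ell}A_j - X_\infty\sigma_jA_j\|.
\]
The second summand vanishes by the equivalence (b)$\iff$(c) in Lemma~\ref{L-OM-conv}, applied to the proper convergence $\sigma_{j,k_\ell} \to \sigma_j$. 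For the first summand I would use expression~\eqref{ope-mean}: since $\delta I \le X_{k_\ell}, A_j \le \delta^{-1}I$, the operators $X_{k_\ell}^{-1/2}A_jX_{k_\ell}^{-1/2}$ have spectra in a common compact subinterval of $(0,\infty)$; on that subinterval the representing functions $f_{\sigma_{j,k_\ell}}$ converge uniformly to $f_{\sigma_j}$ by Lemma~\ref{L-OM-conv}(b). Combined with norm continuity of the functional calculus (immediate in finite dimension, where the strong and norm topologies coincide on bounded sets) and norm continuity of $Y \mapsto Y^{\pm 1/2}$ on $\bP$, this delivers the required convergence. Assembling these ingredients shows $X_\infty$ solves the limit equation, so $X_\infty = X_0$ and the proof is complete.
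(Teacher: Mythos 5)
Your proposal is correct and takes essentially the same route as the paper's proof: uniform bounds $\delta I\le X_k\le\delta^{-1}I$ from Lemma \ref{L-fixed-M}, norm compactness of the order interval in finite dimensions, a two-term triangle split of $\|X_k\sigma_{j,k}A_j-Y_0\sigma_jA_j\|$ handled by Lemma \ref{L-OM-conv} together with norm continuity of the functional calculus, and finally uniqueness of the fixed point plus Lemma \ref{L-ineq-M}. The only cosmetic difference is that the paper passes to the transposes ($X\sigma A=A\sigma'X$) so that the outer factors $A_j^{1/2}$ in the functional-calculus representation stay fixed, whereas you work directly with \eqref{ope-mean} and additionally invoke norm continuity of $Y\mapsto Y^{\pm1/2}$, which is harmless.
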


\begin{proof}
Choose a $\delta\in(0,1)$ such that $\delta I\le A_j\le\delta^{-1}I$ for $1\le j\le n$. Let
$X_0:=M_{(\sigma_1,\dots,\sigma_n)}(A_1,\dots,A_n)$ and
$X_k:=M_{(\sigma_{1,k},\dots,\sigma_{n,k})}(A_1,\dots,A_n)$; then by Lemma \ref{L-fixed-M}\,(1)
and (2), $\delta I\le X_k\le\delta^{-1}I$ as well for all $k$. From the finite-dimensionality
assumption, note that $\{X\in B(\cH)^{++}:\delta I\le X\le\delta^{-1}I\}$ is compact in the
operator norm. Hence, to prove that $X_k\to X_0$ in the operator norm, it suffices to show that
$X_0$ is a unique limit point of $\{X_k\}$. By replacing $\{X_k\}$ by a subsequence we may
assume (for notational brevity) that $\{X_k\}$ itself converges to a $Y_0\in\bP$. For each
$j=1,\dots,n$ we have
\begin{align*}
\|X_k\sigma_{j,k}A_j-Y_0\sigma_jA_j\|
&=\|A_j\sigma_{j,k}'X_k-A_j\sigma_j'Y_0\| \\
&\le\|A_j\|\,\|f_{\sigma_{j,k}'}(A_j^{-1/2}X_kA_j^{-1/2})
-f_{\sigma_j'}(A_j^{-1/2}Y_0A_j^{-1/2})\| \\
&\le\|A_j\|\,\|f_{\sigma_{j,k}'}(A_j^{-1/2}X_kA_j^{-1/2})
-f_{\sigma_j'}(A_j^{-1/2}X_kA_j^{-1/2})\| \\
&\quad+\|A_j\|\,\|f_{\sigma_j'}(A_j^{-1/2}X_kA_j^{-1/2})
-f_{\sigma_j'}(A_j^{-1/2}Y_0A_j^{-1/2})\|,
\end{align*}
where $\sigma_j',\sigma_{j,k}'$ are the transposes of $\sigma_j,\sigma_{j,k}$. Since
$f_{\sigma_{j,k}'}(t)=tf_{\sigma_{j,k}}(t^{-1})\to tf_{\sigma_j}(t^{-1})=f_{\sigma_j'}(t)$ for
any $t\in(0,\infty)$ and $\delta A_j^{-1}\le A_j^{-1/2}X_kA_j^{-1/2}\le\delta^{-1}A_j^{-1}$ for
all $k$, it follows from Lemma \ref{L-OM-conv} that
\begin{align*}
\|f_{\sigma_{j,k}'}(A_j^{-1/2}X_kA_j^{-1/2})-f_{\sigma_j'}(A_j^{-1/2}X_kA_j^{-1/2})\|
\ \longrightarrow\ 0\quad\mbox{as $k\to\infty$},
\end{align*}
so that $\|X_k\sigma_{j,k}A_j-Y_0\sigma_jA_j\|\to0$. Therefore, by Lemma \ref{L-ineq-M},
\begin{align*}
X_k=M(X_k\sigma_{1,k}A_1,\dots,X_k\sigma_{n,k}A_n)
\ \longrightarrow\ M(Y_0\sigma_1A_1,\dots,Y_0\sigma_nA_n)
\end{align*}
in the operator norm. This implies that $Y_0=M(Y_0\sigma_1A_1,\dots,Y_0\sigma_nA_n)$, and hence
$Y_0=X_0$ follows.
\end{proof}

\begin{thm}\label{T-M-boundary}
Assume that $\cH$ is finite-dimensional and $M$ satisfies (G) with a weight vector $\bw$. Then
for every $A_j\in\bP$,
\begin{align*}
&M_{(\frak{p}_{s,r_1},\dots,\frak{p}_{s,r_n})}(A_1,\dots,A_n) \\
&\quad\longrightarrow
\begin{cases}\,(\cA_{\widehat\bw})_{(\#_{r_1},\dots,\#_{r_n})}(A_1,\dots,A_n)
& \text{if $r_1,\dots,r_n\in(0,1]$}, \\
\,(\cH_{\widehat\bw})_{(\#_{-r_1},\dots,\#_{-r_n})}(A_1,\dots,A_n)
& \text{if $r_1,\dots,r_n\in[-1,0)$}, \\
\,G_\bw(A_1,\dots,A_n) & \text{if $r_1=\dots=r_n=0$},
\end{cases}
\end{align*}
in the operator norm as $s\searrow0$ with $s\in(0,1]$, where
\begin{align*}
\widehat\bw:=\begin{cases}
\,\Bigl(\sum_{j=1}^n{w_j\over r_j}\Bigr)^{-1}\Bigl({w_1\over r_1},\dots,{w_n\over r_n}\Bigr)
& \text{if $r_1,\dots,r_n\in(0,1]$}, \\
\,\Bigl(\sum_{j=1}^n{w_j\over-r_j}\Bigr)^{-1}\Bigl({w_1\over-r_1},\dots,{w_n\over-r_n}\Bigr)
& \text{if $r_1,\dots,r_n\in[-1,0)$}.
\end{cases}
\end{align*}
\end{thm}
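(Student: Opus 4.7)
The plan is to sandwich $M_{(\frak{p}_{s,r_1},\dots,\frak{p}_{s,r_n})}(A_1,\dots,A_n)$ between the deformations of $\cH_\bw$ and $\cA_\bw$ using (G), and then to show that both endpoints converge in operator norm to the claimed limit as $s\searrow0$. By (G) and Proposition \ref{P-properties-M}(2),
$$(\cH_\bw)_{(\frak{p}_{s,r_1},\dots,\frak{p}_{s,r_n})}(A_1,\dots,A_n)\le M_{(\frak{p}_{s,r_1},\dots,\frak{p}_{s,r_n})}(A_1,\dots,A_n)\le(\cA_\bw)_{(\frak{p}_{s,r_1},\dots,\frak{p}_{s,r_n})}(A_1,\dots,A_n),$$
so it will be enough to establish the same limit for both extremes. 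I would first pick $\delta\in(0,1)$ with $\delta I\le A_j\le\delta^{-1}I$ for all $j$; by Lemma \ref{L-fixed-M}\,(1)(2) the solution $X_s$ to either extreme's fixed-point equation lies in $\{\delta I\le X\le\delta^{-1}I\}$. Using finite-dimensionality of $\cH$, from any sequence $s_k\searrow0$ I would extract a subsequence along which $X_{s_k}$ converges in operator norm to some $X_*\in\bP$, and uniqueness of the identified limit will then force the full family to converge.

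For $r_1,\dots,r_n\in(0,1]$ I take $X_s:=(\cA_\bw)_{(\frak{p}_{s,r_1},\dots,\frak{p}_{s,r_n})}(A_1,\dots,A_n)$, satisfying $X_s=\sum_jw_j(X_s\,\frak{p}_{s,r_j}A_j)$. The scalar expansion $[(1-s)+st^{r_j}]^{1/r_j}=1+\frac{s}{r_j}(t^{r_j}-1)+O(s^2)$, uniform for $t\in[\delta^2,\delta^{-2}]$, lifts through the continuous functional calculus to
$$X_s\,\frak{p}_{s,r_j}A_j=X_s+\frac{s}{r_j}(X_s\#_{r_j}A_j-X_s)+O(s^2)$$
uniformly in $X_s$ with $\delta I\le X_s\le\delta^{-1}I$. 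Substituting, dividing by $s$, and letting $s\searrow0$ along the convergent subsequence, the operator-norm continuity of $(X,A)\mapsto X\#_{r_j}A$ should yield
$$\sum_{j=1}^n\frac{w_j}{r_j}(X_*\#_{r_j}A_j)=\biggl(\sum_{j=1}^n\frac{w_j}{r_j}\biggr)X_*,$$
i.e., $X_*=\cA_{\widehat\bw}(X_*\#_{r_1}A_1,\dots,X_*\#_{r_n}A_n)$, so by Lemma \ref{L-fixed-M} $X_*=(\cA_{\widehat\bw})_{(\#_{r_1},\dots,\#_{r_n})}(A_1,\dots,A_n)$. The same computation applied to the lower bound, using the companion expansion $[(1-s)+st^{r_j}]^{-1/r_j}=1-\frac{s}{r_j}(t^{r_j}-1)+O(s^2)$, should produce the same limit equation and hence the same $X_*$.

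For $r_1,\dots,r_n\in[-1,0)$ the very same Taylor expansions (valid for $r_j$ of either sign once $s$ is small) should give, for both bounds, the limit equation $\sum_j\widehat w_j(X_*^{-1/2}A_jX_*^{-1/2})^{r_j}=I$; since $(X_*\#_{-r_j}A_j)^{-1}=X_*^{-1/2}(X_*^{-1/2}A_jX_*^{-1/2})^{r_j}X_*^{-1/2}$ (as $-r_j\in(0,1]$), this rearranges as $X_*^{-1}=\sum_j\widehat w_j(X_*\#_{-r_j}A_j)^{-1}$, so Lemma \ref{L-fixed-M} identifies $X_*=(\cH_{\widehat\bw})_{(\#_{-r_1},\dots,\#_{-r_n})}(A_1,\dots,A_n)$. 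For $r_1=\cdots=r_n=0$, since $\frak{p}_{s,0}=\#_s$, I would use $X\#_sA_j=X+sX^{1/2}\log(X^{-1/2}A_jX^{-1/2})X^{1/2}+O(s^2)$ and run the same division-by-$s$ argument on both bounds to arrive at the Karcher equation \eqref{Karcher}, namely $\sum_jw_j\log(X_*^{-1/2}A_jX_*^{-1/2})=0$, uniquely solved by $G_\bw(A_1,\dots,A_n)$.

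The main obstacle I expect is the uniform-in-$X_s$ control of the $O(s^2)$ remainders when transferring the scalar Taylor expansions through the continuous functional calculus. This is handled by observing that when $\delta I\le X_s,A_j\le\delta^{-1}I$, the spectrum of $X_s^{-1/2}A_jX_s^{-1/2}$ lies in the fixed compact interval $[\delta^2,\delta^{-2}]$, on which the smooth families $(s,t)\mapsto[(1-s)+st^{r_j}]^{\pm1/r_j}$ (and $(s,t)\mapsto t^s$ in the case $r_j=0$) admit uniform Taylor estimates; finite-dimensionality of $\cH$ then legitimizes the extraction of operator-norm convergent subsequences and reduces any strong-operator-topology issues to norm convergence.
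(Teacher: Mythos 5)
Your proposal is correct and follows essentially the same route as the paper: reduce to $M=\cA_\bw$ and $M=\cH_\bw$ via (G) and Proposition \ref{P-properties-M}\,(2), use finite-dimensional compactness of $\{\delta I\le X\le\delta^{-1}I\}$ to extract norm-convergent subsequences of the fixed points, expand $\frak{p}_{s,r_j}$ (resp.\ $\#_s$) to first order in $s$ uniformly on the relevant spectral interval, and identify the limit through the uniqueness part of Lemma \ref{L-fixed-M} (Karcher equation in the case $r_j\equiv0$). Your explicit treatment of the uniform $O(s^2)$ remainders through the functional calculus only makes precise what the paper leaves implicit in its $o(s_k)$ notation.
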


\begin{proof}
In view of property (G) and Proposition \ref{P-properties-M}\,(2) we may prove the result for
$M=\cA_\bw$ and for $M=\cH_\bw$. Choose a $\delta\in(0,1)$ such that
$\delta I\le A_j\le\delta^{-1}I$ for $1\le j\le n$. First, let us prove the case $M=\cA_\bw$, and
let either $r_1,\dots,r_n\in(0,1]$ or $r_1,\dots,r_n\in[-1,0)$. For $s\in(0,1]$ let
$X_s:=(\cA_\bw)_{(\frak{p}_{s,r_1},\dots,\frak{p}_{s,r_n})}(A_1,\dots,A_n)$. Since
$I=(\cA_\bw)_{(\frak{p}_{s,r_1},\dots,\frak{p}_{s,r_n})}(X_s^{-1/2}A_1X_s^{-1/2},\dots,
X_s^{-1/2}A_nX_s^{-1/2})$ by Lemma \ref{L-properties-M}, we have
\begin{align}\label{eq-X-fixed}
I=\sum_{j=1}^nw_j\bigl[(1-s)I+s(X_s^{-1/2}A_jX_s^{-1/2})^{r_j}\bigr]^{1/r_j}.
\end{align}
By taking account of $\{X_s\}_{s\in(0,1]}$ being in the compact set
$\{X\in\bP:\delta I\le X\le\delta^{-1}I\}$ (due to the finite-dimensionality of $\cH$), let $Y_0$
be any limit point of $X_s$ as $s\searrow0$, so $X_k:=X_{s_k}\to Y_0$ for some sequence
$s_k\in(0,1]$, $s_k\searrow0$. By \eqref{eq-X-fixed} one can write, as $k\to\infty$,
\begin{align*}
I&=\sum_{j=1}^nw_j\biggl[I+{s_k\over r_j}\bigl\{(X_k^{-1/2}A_jX_k^{-1/2})^{r_j}-I\bigr\}
+o(s_k)\biggr] \\
&=I+s_k\sum_{j=1}^n{w_j\over r_j}\bigl\{(X_k^{-1/2}A_jX_k^{-1/2})^{r_j}-I\bigr\}+o(s_k),
\end{align*}
so that
\begin{align}\label{eq-X-fixed2}
0=\sum_{j=1}^n{w_j\over r_j}\bigl\{(X_k^{-1/2}A_jX_k^{-1/2})^{r_j}-I\bigr\}+{o(s_k)\over s_k}.
\end{align}
When $r_1,\dots,r_n\in(0,1]$, this means that
\begin{align*}
X_k=\sum_{j=1}^n\widehat w_j(X_k\#_{r_j}A_j)+{o(s_k)\over s_k}.
\end{align*}
Letting $k\to\infty$ gives $Y_0=\sum_{j=1}^n\widehat w_j(Y_0\#_{r_j}A_j)$, and hence $Y_0=X_0$
follows, where $X_0:=(\cA_{\hat\bw})_{(\#_{r_1},\dots,\#_{r_n})}(A_1,\dots,A_n)$. Since $\{X_s\}$
has s unique limit point $X_0$, we find that $X_s\to X_0$ as $s\searrow0$.

On the other hand, when $r_1,\dots,r_n\in[-1,0)$, \eqref{eq-X-fixed2} is rewritten as
\begin{align*}
0=\sum_{j=1}^n{w_j\over-r_j}\bigl\{(X_k^{1/2}A_j^{-1}X_k^{1/2})^{-r_j}-I\bigr\}
+{o(s_k)\over s_k},
\end{align*}
that is,
\begin{align*}
X_k^{-1}=\sum_{j=1}^n\widehat w_j(X_k^{-1}\#_{-r_j}A_j^{-1})+{o(s_k)\over s_k}.
\end{align*}
Letting $k\to\infty$ gives
\begin{align*}
Y_0^{-1}=\sum_{j=1}^n\widehat w_j(Y_o^{-1}\#_{-r_j}A_J^{-1})
=\sum_{j=1}^n\widehat w_j(Y_0\#_{-r_j}A_j)^{-1},
\end{align*}
which means that $Y_0=X_0$, where
$X_0:=(\cH_{\widehat w})_{(\#_{-r_1},\dots,\#_{-r_n})}(A_1,\dots,A_n)$. Therefore, $X_s\to X_0$
as $s\searrow0$.

Next, assume that $r_1=\dots=r_n=0$, and let $X_s:=(\cA_\bw)_{\#_s}(A_1,\dots,A_n)$ (recall the
convention $\frak{p}_{s,0}=\#_s$ in \eqref{convention}). Then
\begin{align*}
I=\sum_{j=1}^nw_j(I\#_sX_s^{_1/2}A_jX_s^{-1/2})
=\sum_{j=1}^nw_j(X_s^{-1/2}A_jX_s^{-1/2})^s.
\end{align*}
Let $Y_0$ be any limit point of $X_s$ as $s\searrow0$, so $X_k:=X_{s_k}\to Y_0$ for some sequence
$s_k\in(0,1]$, $s_k\searrow0$. Since, as $k\to\infty$,
\begin{align*}
I&=\sum_{j=1}^nw_j\exp\bigl(s_k\log X_k^{-1/2}A_jA_k^{-1/2}) \\
&=\sum_{j=1}^nw_j\bigl[I+s_k\log X_k^{-1/2}A_jX_k^{-1/2}+o(s_k)\bigr] \\
&=I+s_k\sum_{j=1}^nw_j\log X_k^{-1/2}A_jX_s^{-1/2}+o(s_k),
\end{align*}
one has
\begin{align*}
\sum_{j=1}^nw_j\log X_k^{-1/2}A_jX_k^{-1/2}+{o(s_k)\over s_k}=0.
\end{align*}
Letting $k\to\infty$ gives the Karcher equation $\sum_{j=1}^nw_j\log Y_0^{-1/2}A_jY_0^{-1/2}=0$,
implying that $Y_0=G_\bw(A_1,\dots,A_n)$. Therefore, $G_\bw(A_1,\dots,A_n)$ is a unique limit
point of $X_s$, so $X_s\to G_\bw(A_1,\dots,A_n)$ as $s\searrow0$.

Second, let us treat the case $M=\cH_\bw$. The proof in this case is similar to the above case.
When either $r_j\in(0,1]$ or $r_j\in[-1,0)$, let
$X_s:=(\cH_\bw)_{(\frak{p}_{s,r_1},\dots,\frak{P}_{s,r_n})}(A_1,\dots,A_n)$ for $s\in(0,1]$. Then
\eqref{eq-X-fixed} is replaced with
\begin{align*}
I=\sum_{j=1}^nw_j\bigl[(1-s)I+s(X_s^{-1/2}A_jX_s^{-1/2})^{r_j}\bigr]^{-1/r_j},
\end{align*}
which yields the same equation as \eqref{eq-X-fixed2}. Hence the remaining proof is the same as
before. When $r_1=\dots=r_n=0$, the proof is similar to the above case as well.
\end{proof}

Similarly to $\tau_{s,r}$ for an operator mean $\tau$ defined in \eqref{tau-s,r} and
\eqref{tau-s,r-boundary}, for $M$ satisfying (G) with a weight vector $\bw$, we now define
the family $M_{s,r}$ of $n$-variable means of operators with two parameters $s\in[0,1]$ and
$r\in[-1,1]$ by
\begin{align*}
M_{s,r}:=\begin{cases}
M_{\frak{p}_{s,r}} & s\in(0,1],\ r\in[-1,1], \\
P_{\bw,r}, & s=0,\ r\in[-1,1]\setminus\{0\}, \\
G_\bw, & s=0,\ r=0.
\end{cases}
\end{align*}
Then we have

\begin{thm}\label{T-M-r,s}
Assume that $\cH$ is finite-dimensional and $M$ satisfies (G) with a weight vector $\bw$. Then
the family $M_{s,r}$ is continuous (in the sense of pointwise operator norm convergence) in
$s\in[0,1]$ and $r\in[-1,1]$.
\end{thm}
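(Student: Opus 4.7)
The plan is to establish continuity separately on the interior, where $s \in (0, 1]$ and $r \in [-1, 1]$, and on the boundary, where $s = 0$ and $r \in [-1, 1]$. For an interior point $(s_0, r_0)$ with $s_0 > 0$, given $(s_k, r_k) \to (s_0, r_0)$, elementary computation shows $f_{\frak{p}_{s_k, r_k}}(t) \to f_{\frak{p}_{s_0, r_0}}(t)$ pointwise in $t \in (0, \infty)$, so $\frak{p}_{s_k, r_k} \to \frak{p}_{s_0, r_0}$ properly by Lemma \ref{L-OM-conv}. Since $s_0 > 0$ forces $f_{\frak{p}_{s_0, r_0}} \not\equiv 1$, the limit mean differs from $\frak{l}$, and Proposition \ref{P-conti-M} gives the desired norm convergence $M_{s_k, r_k}(A_1, \dots, A_n) \to M_{s_0, r_0}(A_1, \dots, A_n)$.

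For a boundary point $(0, r_0)$, fix $(s_k, r_k) \to (0, r_0)$. Along any subsequence with $s_{k_j} = 0$, $M_{s_{k_j}, r_{k_j}}$ equals $P_{\bw, r_{k_j}}$ or $G_\bw$, and the required convergence follows from continuity of $P_{\bw, r}$ in $r \in [-1, 1] \setminus \{0\}$ (provable by a fixed-point argument similar to the one below) together with $\lim_{r \to 0} P_{\bw, r} = G_\bw$ from Example \ref{E-multi-power}. It thus suffices to treat the subsequence where $s_k > 0$ throughout. By property (G) and Proposition \ref{P-properties-M}(2), $M_{s_k, r_k}(A_1, \dots, A_n)$ is sandwiched between $(\cH_\bw)_{\frak{p}_{s_k, r_k}}(A_1, \dots, A_n)$ and $(\cA_\bw)_{\frak{p}_{s_k, r_k}}(A_1, \dots, A_n)$, reducing the problem to showing that both endpoints converge in operator norm to $M_{0, r_0}(A_1, \dots, A_n)$.

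For each endpoint, set $X_k := M_{s_k, r_k}(A_1, \dots, A_n)$, with $M$ being $\cA_\bw$ or $\cH_\bw$. Finite-dimensional compactness of $\{X \in \bP : \delta I \le X \le \delta^{-1} I\}$ allows extraction of a convergent subsequence with limit $Y_0$. Substituting $X_k$ into its fixed-point equation, e.g., for $M = \cA_\bw$,
$$I = \sum_{j=1}^n w_j \bigl[(1 - s_k)I + s_k (X_k^{-1/2} A_j X_k^{-1/2})^{r_k}\bigr]^{1/r_k},$$
expanding to first order in $s_k$, dividing by $s_k$, and letting $k \to \infty$, one recovers either the defining equation for $P_{\bw, r_0}$ (when $r_0 \ne 0$) or the Karcher equation for $G_\bw$ (when $r_0 = 0$). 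Uniqueness forces $Y_0 = M_{0, r_0}(A_1, \dots, A_n)$, so $X_k$ has a unique limit point, giving convergence of the whole sequence.

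The main obstacle is ensuring the Taylor expansion is valid uniformly as $r_k$ varies, especially when $r_0 = 0$ and both $s_k, r_k \to 0$ jointly. For $r_0 \ne 0$, $r_k$ eventually lies in a compact subset of $(0, 1]$ or $[-1, 0)$, and the expansion $[(1 - s_k)I + s_k Y^{r_k}]^{1/r_k} = I + (s_k/r_k)(Y^{r_k} - I) + o(s_k)$ is uniformly controlled on the bounded positive operators $Y$ in question. For $r_0 = 0$ one uses instead the logarithmic form as in the $r = 0$ case of Theorem \ref{T-M-boundary} and checks that the $o(s_k)$ remainders remain $o(s_k)$ uniformly in the joint limit—a careful but routine bookkeeping exercise since $s_k$, $r_k$, and the spectra of $X_k$ all range over compact sets.
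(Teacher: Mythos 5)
Your proposal is correct and follows essentially the same route as the paper: interior continuity via Proposition \ref{P-conti-M}, and boundary continuity by the sandwich reduction through (G) and Proposition \ref{P-properties-M}\,(2) to $\cA_\bw$ and $\cH_\bw$, followed by compactness, the fixed-point equation, a first-order expansion in $s_k$ (with the logarithmic form, as in Theorem \ref{T-tau-boundary}, for the joint limit $s_k,r_k\to0$), and uniqueness of the solution to the limiting equation. The only cosmetic difference is that the paper invokes the already-proved Theorem \ref{T-M-boundary} and the known convergence $P_{\bw,r}\to G_\bw$ for the fixed-$r$ and $s=0$ cases rather than rerunning the argument, which is exactly the machinery you reproduce.
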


\begin{proof}
The continuity of $M_{s,r}$ on $(s,r)\in(0,1]\times[-1,1]$ is a consequence of Proposition
\ref{P-conti-M}. Hence we may show the continuity at $(0,r)$ where $r\in[-1,1]$. Theorem
\ref{T-M-boundary} in particular says that $M_{s_k,r}\to M_{0,r}$ as $s_k\to0$ with
$s_k\in(0,1]$ for any fixed $r\in[-1,1]$. But, the proof of Theorem \ref{T-M-boundary} can
slightly be modified to show that $M_{s_k,r_k}\to M_{0,r}=P_{\bw,r}$ as $s_k\to0$ and
$r_k\to r\ne0$ where $s_k\in(0,1]$ and $r_k\in[-1,1]$. Moreover, it is known \cite{LP1,LL2,LL3}
that $M_{0,r}=P_{\bw,r}\to M_{0,0}=G_\bw$ as $r\to0$. Thus it remains to show that
$M_{s_k,r_k}\to M_{0,0}=G_\bw$ as $s_k\to0$ and $r_k\to0$ with $s_k\in(0,1]$ and
$r_k\in[-1,1]\setminus\{0\}$. For this, as in the proof of Theorem \ref{T-M-boundary} we may
prove the convergence for $M=\cA_\bw$ and for $M=\cH_\bw$. For $M=\cA_\bw$ let
$X_k:=(\cA_\bw)_{\frak{p}_{s_k,r_k}}(A_1,\dots,A_n)$; then
\begin{align}\label{eq-X-fixed3}
I=\sum_{j=1}^nw_j\bigl[(1-s_k)I+s_k(X_k^{-1/2}A_jX_k^{-1/2})^{r_k}\bigr]^{1/r_k}.
\end{align}
As in the proof (the part of $r_k,s_k\to0$) of Theorem \ref{T-tau-boundary} by replacing $1/x_k$
with the operator $X_k^{-1/2}A_jA_k^{-1/2}$, one can prove that
\begin{align*}
&\bigl[I+s_k\bigl\{(X_k^{-1/2}A_jX_k^{-1/2})^{r_k}-I\bigr\}\bigr]^{1/r_k} \\
&\qquad=1+s_k\log X_k^{-1/2}A_jX_k^{-1/2}+o(s_k)\quad\mbox{as $k\to\infty$}.
\end{align*}
Inserting this into \eqref{eq-X-fixed3} gives
\begin{align*}
\sum_{j=1}^nw_j\log X_k^{-1/2}A_jX_k^{-1/2}+{o(s_k)\over s_k}=0
\quad\mbox{as $k\to\infty$},
\end{align*}
from which $X_k\to G_\bw(A_1,\dots,A_n)$ follows as in the proof (the part of $r_1=\dots=r_n=0$)
of Theorem \ref{T-M-boundary}. The proof for the case $M=\cH_\bw$ is similar.
\end{proof}

\begin{problem}\label{Q-infinite-dim}\rm
Properties of $2$-variable operator means reduce to that of operator monotone functions on
$(0,\infty)$ due to Kubo and Ando \cite{KA}, however this is not the case for multivariate
means of operators. Therefore, in the proofs of Proposition \ref{P-conti-M} and Theorems
\ref{T-M-boundary} and \ref{T-M-r,s} we have used the compactness argument, the reason why $\cH$
is assumed finite-dimensional. It seems that we have to find a new technique to prove those
results in the infinite-dimensional setting, while they are likely to hold.
\end{problem}

\begin{example}\label{E-M-deform1}\rm
From Theorem \ref{T-M-r,s} one has a lot of one parameter continuous families of $n$-variable
means of operators satisfying (A)--(D) and (G) (at least $\cH$ is finite-dimensional). For example,
when $M=\cA_\bw$ or $G_\bw$ or $\cH_\bw$, $\{M_{s,r}\}_{-1\le r\le1}$ is a continuous family of
means interpolating $M_{!_s}$ ($r=-1$), $M_{\#_s}$ ($r=0$) and $M_{\triangledown_s}$ ($r=1$)
for each $s\in(0,1]$, and $\{M_{s,r}\}_{0\le s\le1}$ is a continuous family of means joining
$P_{\bw,r}$ ($s=0$) and $M$ ($s=1$) for each $r\in[-1,1]\setminus\{0\}$ (also joining $G_\bw$
and $M$ when $r=0$). In particular, for $\cA$ and $\cH$ with weight $\bw=(1/n,\dots,1/n)$,
$X=\cA_{\,!}(A_1,\dots,A_n)$ and $X=\cH_\triangledown(A_1,\dots,A_n)$ are respectively the solutions
to
\begin{align*}
X={1\over n}\sum_{j=1}^nX\,!A_j,\qquad X^{-1}={1\over n}\sum_{j=1}^nX^{-1}\,!A_j^{-1},
\qquad X\in\bP.
\end{align*}
Unlike $\triangledown_{\,!}=\,!_\triangledown=\#$ in \eqref{harm-arith-geom}, $\cA_{\,!}$ and
$\cH_\triangledown=(\cA_{\,!})^*$ are not $G$ ($=G_\bw$ with $\bw=(1/n,\dots,1/n)$). In fact,
even for scalars $a_1,a_2,a_3>0$, $x=\cH_\triangledown(a_1,a_2,a_3)$ is a solution to
\begin{align*}
x^3+{a_1+a_2+a_3\over3}\,x^2-{a_1a_2+a_2a_3+a_3a_1\over3}\,x-a_1a_2a_3=0,\qquad x>0,
\end{align*}
which is not equal to $G(a_1,a_2,a_3)=(a_1a_2a_3)^{1/3}$.
\end{example}

\begin{example}\label{E-M-deform2}\rm
Since equation \eqref{eq-multi-geom} determines $X=G_\bw(A_1,\dots,A_n)$, we have
\begin{align*}
(G_\bw)_{\#_\alpha}=G_\bw,\qquad\alpha\in(0,1].
\end{align*}
Extending \eqref{power-mean-deform} we furthermore have
\begin{align*}
(P_{\bw,r})_{\frak{p}_{\alpha,r}}=P_{\bw,r}
\end{align*}
for every weight vector $\bw$, $\alpha\in(0,1]$ and $r\in[-1,1]\setminus\{0\}$. To see this, let
$X_0:=P_{\bw,r}(A_1,\dots,A_n)$, which is determined by the equation
\begin{align*}
I=\sum_{j=1}^nw_j(X_0^{-1/2}A_jX_0^{-1/2})^r.
\end{align*}
For every $\alpha\in(0,1]$ this is equivalent to
\begin{align*}
I&=\sum_{j=1}^nw_j\bigl[(1-\alpha)I+\alpha(X_0^{-1/2}A_jX_0^{-1/2})^r\bigr] \\
&=\sum_{j=1}^nw_j(I\,\frak{p}_{\alpha,r}(X_0^{-1/2}A_jX_0^{-1/2}))^r,
\end{align*}
which means that
\begin{align*}
I=P_{\bw,r}\bigl(I\,\frak{p}_{\alpha,r}(X_0^{-1/2}A_1X_0^{-1/2}),\dots,
I\,\frak{p}_{\alpha,r}(X_0^{-1/2}A_nX_0^{-1/2})\bigr).
\end{align*}
Therefore, $X_0=P_{\bw,r}(X_0\,\frak{p}_{\alpha,r}A_1,\dots,X_0\,\frak{p}_{\alpha,r}A_n)$, that is,
$X_0=(P_{\bw,r})_{\frak{p}_{\alpha,r}}(A_1,\dots,A_n)$.
\end{example}

\section{Final remark}

Assume that $\cH$ is finite-dimensional, and let $\bP$ denote $B(\cH)^{++}$ as in Section 4. Let
$\cP(\bP)$ be the set of all Borel probability measures on $\bP$. Apart from the Thompson metric
$\dT$, the \emph{Riemannian trace metric} on $\bP$ is given as
$\delta(A,B):=\|\log A^{-1/2}BA^{-1/2}\|_2$, where $\|\cdot\|_2$ is the Hilbert-Schmidt norm.
Then the Polish space $(\bP,\delta)$ is a typical \emph{NPC} (nonpositive curvature) space.
Therefore, the general theory of NPC spaces (see \cite{St}) is applied to define the
\emph{Cartan barycenter} $G(\mu)$ of $\mu\in\cP^1(\bP)$ as
\begin{align*}
G(\mu)=\mathop{\mathrm{arg\,min}}_{Z\in\bP}
\int_\bP\bigl[\delta^2(Z,X)-\delta^2(Y,X)\bigr]\,d\mu(X)
\end{align*}
(independently of the choice of a fixed $Y\in\bP$). Here, $\cP^1(\bP)$ is the set of
$\mu\in\cP(\bP)$ with finite first moment, i.e., for some (hence all) $Y\in\bP$,
$\int_\bP\delta(X,Y)\,d\mu(X)<\infty$. A fundamental property of $G(\mu)$ is the contraction
$\delta(G(\mu),G(\nu))\le d_1^W(\mu,\nu)$, $\mu,\nu\in\cP^1(\bP)$, where $d_1^W$ is the
\emph{$1$-Wasserstein distance} on $\cP^1(\bP)$. The $G(\mu)$ extends the multivariate
geometric mean (or the Karcher mean) $G_\bw$ in Example \ref{E-multi-geom} as
$G(\mu)=G_\bw(A_1,\dots,A_n)$ if $\mu=\sum_{j=1}^nw_j\delta_{A_j}$. In fact, it is known
\cite{HL1} that $G(\mu)$ is the unique solution to the Karcher equation
\begin{align}\label{geom-prob}
\int_\bP\log X^{-1/2}AX^{-1/2}\,d\mu(A)=0,\qquad X\in\bP.
\end{align}

Moreover, the multivariate weighted power means $P_{\bw,r}$ in Example \ref{E-multi-power} were
extended in \cite{KL} to the means $P_r(\mu)$ of $\mu\in\cP^\infty(\bP)$, the set of
$\mu\in\cP(\bP)$ with bounded support (i.e., supported on
$\{A\in\bP:\delta I\le A\le\delta^{-1}I\}$ for some $\delta\in(0,1)$), as unique solutions to
the equations for $X\in\bP$
\begin{align}
X&=\int_\bP X\#_rA\,d\mu(A)\qquad\qquad\quad\,\mbox{for $r\in(0,1]$},
\label{power-prob1}\\
X&=\biggl[\int_\bP(X\#_{-r}A)^{-1}\,d\mu(A)\biggr]^{-1}\quad\mbox{for $r\in[-1,0)$}.
\label{power-prob2}
\end{align}
The particular cases where $r=1,-1$ are the arithmetic and harmonic means
\begin{align*}
\cA(\mu):=\int_\bP A\,d\mu(A),\qquad\cH(\mu):=\biggl[\int_\bP A^{-1}\,d\mu(A)\biggr]^{-1}.
\end{align*}
We notice that the above definition of $P_r(\mu)$ is applicable more generally for
$\mu\in\cP(\bP)$ with $\int_\bP(\|A\|+\|A^{-1}\|)\,d\mu(A)<\infty$. Note also that one may extend
the definitions of $G(\mu)$ and $P_r(\mu)$ via equations \eqref{geom-prob}--\eqref{power-prob2}
to the case of an infinite-dimensional $\cH$ (see \cite{LP2}).

The usual order $A\le B$ for $A,B\in\bP$ is naturally extended to $\mu,\nu\in\cP(\bP)$ in such a
way that $\mu\le\nu$ if $\mu(\cU)\le\nu(\cU)$ for any upper Borel subset $\cU$ of $\bP$, where
$\cU$ is said to be \emph{upper} if $A\in\cU$ and $A\le B\in\bP$ imply $B\in\cU$ (see
\cite{La,HLL} for details). Then we say that a mean $M$ on a certain subclass of $\cP(\bP)$ is
monotone if $\mu\le\nu$ implies $M(\mu)\le M(\nu)$ for any $\mu,\nu$ in the domain of $M$.
Moreover, one can consider the weak convergence $\mu_k\to\mu$ for $\mu_k,\mu\in\cP(\bP)$ or the
stronger convergence $d_1^W(\mu_k,\mu)\to0$ for $\mu_k,\mu\in\cP^1(\bP)$ (see \cite{St,HLL} for
details).

With use of the notions mentioned above one can apply the fixed point method considered in the
paper to means of $M$, for example $G$ and $P_r$ above, defined on a subclass of $\cP(\bP)$. Let
$\sigma$ be any operator mean in the sense of Kubo and Ando. For $X\in\bP$ and $\mu\in\cP(\bP)$,
let $X\sigma\mu$ be the push-forward of $\mu$ by the map $A\in\bP\mapsto X\sigma A\in\bP$. Then
it is easy to see that if $\mu$ is, for instance, in the class $\cP^1(\bP)$ or $\cP^\infty(\bP)$,
then $X\sigma\mu$ is in the same class. Hence, for a mean $M$ on such a class, we can consider
the equation
\begin{align*}
X=M(X\sigma\mu),\qquad X\in\bP.
\end{align*}
Under certain conditions of $M$ similar to those given in Section 4, we can show that the above
equation has a unique solution. Thus, the deformation $M_\sigma$ of $M$ by $\sigma$ is defined,
which satisfies properties inherited from the original $M$. By the definition of $P_r$ via
\eqref{power-prob1} and \eqref{power-prob2}, we have examples $P_r=\cA_{\#_r}$ and
$P_{-r}=\cH_{\#_r}$ for $0<r\le1$. The details on the extension of the fixed point method to
probability measures will be presented in a forthcoming paper.

\end{document}